
\documentclass{birkmult}

\usepackage{graphicx}
\usepackage{tikz}
\usepackage{amssymb}
\usepackage{mathrsfs}
\usepackage{amsmath}
\usepackage{amsfonts}
\usepackage{amsthm}
\usepackage[english]{babel}
\usepackage{mdwlist}
\usepackage{comment}

\usepgflibrary{decorations.markings}
\usetikzlibrary{decorations.markings}
\usepackage{hyperref}

\pagenumbering{arabic}

\DeclareMathOperator{\supp}{supp}
\DeclareMathOperator{\im}{Im}
\DeclareMathOperator{\re}{Re}

\DeclareMathOperator{\diag}{diag}
\DeclareMathOperator{\transpose}{T}

\newcommand{\ds}{\displaystyle}

%
%
%

\newtheorem{theorem}{Theorem}[section]
\newtheorem{lemma}[theorem]{Lemma}
\newtheorem{proposition}[theorem]{Proposition}

 \theoremstyle{definition}
 
 \theoremstyle{remark}

 \numberwithin{equation}{section}

\theoremstyle{definition}
\newtheorem{definition}[theorem]{Definition}

\theoremstyle{remark}
\newtheorem*{example}{Example}
\newtheorem{remark}[theorem]{Remark}


\begin{document}
%
%
%
%
%
%
%
%
%
\
\title{Double scaling limit for  modified Jacobi-Angelesco polynomials}

\date{Dedicated to the memory of Julius Borcea}
\author{Klaas Deschout}

\address{Department of Mathematics \br
Katholieke Universiteit Leuven \br
Celestijnenlaan 200 B\br
3001 Leuven, Belgium}

\email{klaas.deschout@wis.kuleuven.be}

\author{Arno B.J. Kuijlaars}
\address{Department of Mathematics \br
Katholieke Universiteit Leuven \br
Celestijnenlaan 200 B\br
3001 Leuven, Belgium}
\email{arno.kuijlaars@wis.kuleuven.be}


\date{September 29, 2010}
\dedicatory{Dedicated to the memory of Julius Borcea}

\maketitle

\begin{abstract}
We consider multiple orthogonal polynomials with respect to  two modified
Jacobi weights on touching intervals $[a,0]$ and $[0,1]$, with $a < 0$,
and study a transition that occurs at $a = -1$.
The transition is studied in a double scaling limit,
where we let the degree $n$ of the polynomial tend to infinity  while the
parameter $a$ tends to $-1$ at a rate of $O(n^{-1/2})$.
We obtain a Mehler-Heine type asymptotic formula for the polynomials
in this regime.
The method used to analyze the problem is the steepest descent
technique for Riemann-Hilbert problems.  A key point in the
analysis is the construction of a new local parametrix.
\end{abstract}

\section{Introduction and statement of results}

\subsection{Introduction}

Multiple orthogonal polynomials are a generalization of orthogonal polynomials
that originated in works on Hermite-Pad\'e rational approximation problems, but recently
found other applications in random matrix theory and related probabilistic models.

In the approximation theory literature two main classes of multiple orthogonal
polynomials were identified for which detailed asymptotic results are available.
These are the Angelesco systems and the Nikishin systems. In an Angelesco system \cite{ang19} the multiple
orthogonality is defined on disjoint intervals, while in a Nikishin system \cite{niso91}
the orthogonality is on the same interval with orthogonality measures that are related to each other
via an intricate hierarchical structure.

A main stimulus for the asymptotic analysis of orthogonal polynomials was given
by the formulation of a $2 \times 2$ matrix valued Riemann-Hilbert problem for orthogonal
polynomials by Fokas, Its and Kitaev \cite{fik92} and the subsequent  application
of the powerful Deift-Zhou steepest descent technique to this Riemann-Hilbert problem
in \cite{dkmvz99a,dkmvz99b} and many later papers.

A Riemann-Hilbert problem for multiple orthogonal polynomials was formulated
by Van Assche, Geronimo and Kuijlaars \cite{vgk01}. The Riemann-Hilbert problem is
of size $(r+1) \times (r+1)$, where $r$ is the number of orthogonality
weights for the multiple orthogonal polynomials. The Riemann-Hilbert formulation
was already used in several papers,
see e.g.~\cite{abk05, akv08, aply10, blku04, blku07, bff09, duku09, kmw09, kvw05, lywi08}
for the asymptotic analysis of multiple orthogonal polynomials and their
associated multiple orthogonal polynomial ensembles \cite{kui10a, kui10b}.

In this paper we consider Angelesco systems on two touching intervals
$[a,0]$ and $[0,1]$ with $a < -1$. Our interest is in the special behavior
at $0$ that takes place near a critical value of $a$. A prime example for
this situation is given by the Jacobi-Angelesco weights
\begin{equation}
\begin{aligned} \label{JacobiAngelesco}
    w_1(x) & = |x-a|^{\alpha} |x|^{\beta} |x-1|^{\gamma}, \quad && x \in (a,0), \\
  w_2(x) & = |x-a|^{\alpha} |x|^{\beta} |x-1|^{\gamma}, && x \in (0,1),
  \end{aligned}
  \end{equation}
 with $\alpha, \beta, \gamma > -1$, which were first studied by Kaliaguine \cite{kal79, karo96}.
The associated multiple orthogonal polynomials are among the classical
multiple orthogonal polynomials \cite{abv03} and as such have
a number of very special properties. There is e.g.\ a  raising operator which gives rise to a Rodrigues-type
formula and a third order linear differential equation as well as an explicit
four term recurrence relation for the diagonal case
Jacobi-Angelesco multiple orthogonal polynomials, see \cite{apt89,  karo96, tak05, tak09, tul09, vas99, vaco01}.

\subsection{Modified Jacobi-Angelesco weights}

We generalize the system \eqref{JacobiAngelesco} by considering more general
modified Jacobi weights on the two intervals $(a,0)$ and $(0,1)$. We will use the following
weights $w_1$ and $w_2$ throughout this paper.

\begin{definition}
Let $a < 0$, $\alpha, \beta, \gamma > -1$ and define
\[ \Delta_1 = [a,0], \qquad \Delta_2 = [0,1]. \]
For $j=1,2$, let $h_j$ be strictly positive on $\Delta_j$ with an analytic
continuation to a neighborhood of $\Delta_j$ in the complex plane.
Then we define
\begin{equation}
\begin{aligned}
    w_{1}(x) & = (x-a)^{\alpha} |x|^{\beta} h_{1}(x), \quad && x \in \Delta_1, \\
    w_{2}(x) & =  x^{\beta} (1-x)^{\gamma} h_{2}(x), && x \in \Delta_2.
    \label{definitionweights}
\end{aligned}
\end{equation}
\end{definition}
When appropriate we set $w_j(x) \equiv 0$ for $x \in \mathbb R \setminus \Delta_j$.

The definition of the multiple orthogonal polynomial  (of type II) with respect to the
weights \eqref{definitionweights} is as follows.
\begin{definition}
Given a multi-index $(n_{1}, n_{2}) \in \mathbb{N}^{2}$ the multiple orthogonal polynomial
is defined as the unique monic polynomial $P_{n_{1},n_{2}}$ of degree $n_{1} + n_{2}$ such that
\begin{equation}
    \int_{\Delta_j} P_{n_{1},n_{2}}(x) x^k w_j(x) \, dx = 0, \quad
    \textrm{for } k = 0, \mathellipsis n_{j} -1,
    \label{definitionpn1n2} \end{equation}
for $j=1,2$.
    \end{definition}
Since we are dealing with an Angelesco system of weights \cite{ang19} the polynomial
$P_{n_{1},n_{2}}$ indeed exists and is uniquely characterized by \eqref{definitionpn1n2}. It is also
known that all the zeros of $P_{n_{1},n_{2}}$ are real and simple with  $n_1$ zeros in $(a,0)$ and $n_2$
zeros in $(0,1)$, see e.g.\ \cite{vas06}.
For the definition of the multiple orthogonal polynomials of type I we also refer to \cite{vas06}.

\subsection{The phase transition}
\label{subsectionphasetransition}

We consider in this paper the diagonal case
\[ n_1 = n_2 = n. \]
It is known that the zeros of the multiple orthogonal polynomial $P_{n,n}$
have a weak limit as $n \to \infty$, which only depends on the parameter $a < 0$.
The limiting zero distribution can be characterized as the solution to a vector
equilibrium problem for two measures \cite{gora81,niso91}.

Define the logarithmic energy $I(\nu)$ of a measure $\nu$ as
\begin{equation} \label{logenergy}
    I(\nu) := \iint  \log \frac{1}{|x-y|} \, d\nu(x) d\nu(y),
\end{equation}
and the mutual logarithmic energy $I(\nu, \mu)$ of two measures $\nu$ and $\mu$ as
\begin{equation} \label{mutuallogenergy}
    I(\nu, \mu) := \iint \log \frac{1}{|x-y|} \, d\nu(x) d\mu(y).
\end{equation}
Then the vector equilibrium problem is defined as follows.

\begin{definition} \label{vectorequilibrium}
The vector equilibrium problem asks to minimize the energy functional
\begin{equation} \label{energy}
    E(\nu_{1},\nu_{2}) := I(\nu_{1}) + I(\nu_{1},\nu_{2}) + I(\nu_{2})
\end{equation}
among positive measures $\nu_{1}$ and $\nu_{2}$ with $\supp(\nu_1) \subset [a,0]$, $\supp(\nu_2) \subset [0,1]$
and $\int d\nu_{1} = \frac{1}{2}$, $\int  d\nu_{2} = \frac{1}{2}$
\end{definition}

One may interpret this energy functional as the energy resulting
from two conductors $[a,0]$ and $[0,1]$ with each an equal amount of charged particles.
Particles on the same conductor repel each other, such that the resulting electrostatic
force is proportional to the inverse of the distance between the two particles,
which accounts for the terms $I(\nu_{1})$ and $I(\nu_{2})$.
Additionally, particles on different conductors also repel each other,
but with only half the strength. This leads to the term $I(\nu_{1},\nu_{2})$ in \eqref{energy}.
This kind of interaction is known as Angelesco-type interaction, see \cite{apt98}.

The minimizers $\nu_{1}$ and $\nu_{2}$ for the Angelesco equilibrium problem
are called the equilibrium measures.  They exist, are unique and are absolutely continuous with respect to the Lebesgue measure.
It is due to Kaliaguine \cite{kal79} that the endpoints of the supports of the equilibrium measures are
given by $a,0,1$ and a fourth point $b$:
\begin{equation}
    b = \frac{(a+1)^{3}}{9(a^{2}-a+1)} \label{expressions}
    \end{equation}
such that
\begin{equation}
    \begin{aligned}
     \supp \nu_{1} & = [a,b] \subset [a,0], \, && \supp \nu_{2} = [0,1], \quad &&  \textrm{ if } a \leq -1, \\
     \supp \nu_{1} & = [a,0], && \supp \nu_{2} = [b,1] \subset [0,1], && \textrm{ if } a \geq -1.
     \end{aligned}
\end{equation}
We see here the pushing effect: the charge on  the smaller interval pushes away the charge
on the larger interval, thereby creating a gap in the support. The gap disappears in
the symmetric case $a=-1$ where we have $b=0$.

The density of the equilibrium measures blows up as an inverse square root at the endpoints
$a,0$, and $1$ of its supports. These are the so-called hard edges.
For $a \neq -1$ there is a soft edge at $b$, where the equilibrium density vanishes like a square root.
In the symmetric case $a = -1$, where both intervals have equal size, both measures have full supports
and the densities behave like inverse cube roots at $0$. We may call $0$ in this case  a Kaliaguine point
after \cite{kal79}. A sketch of the densities in the three cases is given in Figure~\ref{sketchtransition}.

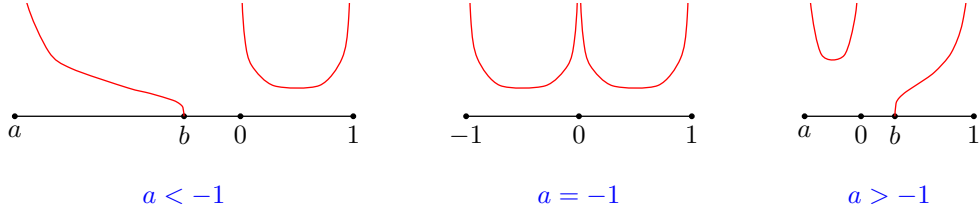
\begin{figure}[t]
\centering
\begin{tikzpicture}[scale = 1.5,line width = .5]
\useasboundingbox (-5,-1) rectangle (4.5,1);
\draw (2.75,-0.7) node[text = blue] {$a > -1$};
\draw (0,-0.7) node[text = blue] {$a = -1$};
\draw (-3.5,-0.7) node[text = blue] {$a < -1$};
\clip (-7,-1) rectangle (5.5,1);
\draw [color = black] (-1,0) -- (1,0);
\filldraw (-1,0) circle (.02) node[below] {$-1$};
\filldraw (0,0) circle (.02) node[below] {$0$};
\filldraw (1,0) circle (.02)  node[below] {$1$};
\draw[color = red, line width = .5] plot[smooth] coordinates{
(-.99, 1.933780056)
(-.96, 0.886107499)
(-.90, 0.5128480254)
(-.70, 0.2731187430)
(-.30, 0.2697788644)
(-.10, 0.4495865000)
(-.04, 0.6779079288)
(-.01, 1.192546280)
(-.001, 2.8)};
\draw[color = red, line width = .5] plot[smooth] coordinates{
(.99, 1.933780056)
(.96, 0.886107499)
(.90, 0.5128480254)
(.70, 0.2731187430)
(.30, 0.2697788644)
(.10, 0.4495865000)
(.04, 0.6779079288)
(.01, 1.192546280)
(.001, 2.8)};
\draw[black] (-5,0) -- (-2,0);
\filldraw (-5,0) circle (.02) node[below] {$a$};
\filldraw (-3,0) circle (.02) node[below] {$0$};
\filldraw (-2,0) circle (.02)  node[below] {$1$};
\filldraw (-3.5,0) circle (.02) node[below]{$b$};
\draw[color = red, line width = .5] plot[smooth] coordinates{
(-2.01, 1.933780056)
(-2.04, 0.886107499)
(-2.10, 0.5128480254)
(-2.30, 0.2731187430)
(-2.70, 0.2697788644)
(-2.90, 0.4495865000)
(-2.96, 0.6779079288)
(-2.99, 1.192546280)
(-2.999, 2.8)
};
\draw[color = red, line width = .5] plot[smooth] coordinates{
(-4.99, 3.314498463)
(-4.96,1.648843638)
(-4.90,1.031937189)
(-4.70, 0.5732656751)
(-4.50,0.4242640687)
(-4,0.2522689246)
(-3.80, 0.2026800233)
(-3.60, 0.1425795491)
(-3.54,0.1110349815)
(-3.51,0.07771916399)
(-3.5,0)
};
\draw[black] (2,0) -- (3.5,0);
\filldraw (2,0) circle (.02) node[below] {$a$};
\filldraw (2.5,0) circle (.02) node[below] {$0$};
\filldraw (3.5,0) circle (.02)  node[below] {$1$};
\filldraw (2.8,0) circle (.02) node[below] {$b$};
\draw[color = red, line width = .5] plot[smooth] coordinates{
(2.005, 3.867560112)
(2.02,1.772214998)
(2.05,1.025696051)
(2.15,0.5462374860)
(2.35,0.5395577288)
(2.45, 0.8991730000)
(2.48,1.355815858)
(2.495,2.385092560)};
\draw[color = red, line width = .5] plot[smooth] coordinates{
(2.8,0)
(2.81,0.1142080481)
(2.84,0.1651445647)
(2.9,0.2177938588)
(3,0.2837224827)
(3.15,0.3900355956)
(3.3,0.5640904640)
(3.4,0.8349473050)
(3.46,1.352001716)
(3.49,2.734220451)
};
\end{tikzpicture}
\caption{A sketch of the equilibrium densities in the three cases $a < -1$, $a=-1$, and $-1 < a < 0$.
For a clearer picture the size of the gap between $0$ and $b$ has been exaggerated.  In a true plot $b$
would be much closer to $0$ as the gap is less than one ninth of the length of the larger interval.}
\label{sketchtransition}
\end{figure}

It is this interior soft-to-hard edge transition as $a$ varies around $-1$
that will give rise to a new critical behavior of the multiple orthogonal polynomials
around $0$ that we wish to describe in this paper.

\subsection{Main result}

The main result of this paper is a Mehler-Heine type asymptotic formula for
the multiple orthogonal polynomial $P_{n,n}(z)$ near $z=0$, with the parameter $a$ near $-1$.
We use $P_{n,n}(z;a)$ to denote the dependence on $a$.

\begin{theorem} \label{maintheorem}
For $a < 0$ close enough to $-1$, let $P_{n,n}(z;a)$ be the multiple
orthogonal polynomial with respect to the weights \eqref{definitionweights} and the multi-index $(n,n)$.  Let $\tau \in \mathbb R$ and
\begin{equation} \label{an}
    a_n = -1 + \frac{\sqrt{2} \tau}{n^{1/2}}.
\end{equation}
Then we have for every $z \in \mathbb C$,
\begin{equation} \label{MehlerHeine}
    P_{n,n} \left( \frac{z}{\sqrt{2} n^{3/2}} ; a_n \right)
    =  (-1)^n C_n Q(z; \tau) \left( 1 + \mathcal O(n^{-1/6})\right)
\end{equation}
where $C_n$ is a positive constant and
\begin{equation} \label{contourQ}
    Q(z; \tau) = i \int_{\Gamma_0}
      t^{-\beta-1} \exp\left(- \frac{z^2}{2t^2} - \frac{\tau z}{t} + t\right) \, dt,
      \qquad z \in \mathbb C,
      \end{equation}
where the contour $\Gamma_0$ is shown in of Figure~{\rm\ref{figuregamma0}}.
The convergence in \eqref{MehlerHeine} is uniform for $z$ in compact subsets of $\mathbb C$.
\end{theorem}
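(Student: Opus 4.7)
The plan is to apply the Deift-Zhou steepest descent method to the $3 \times 3$ Riemann-Hilbert problem for multiple orthogonal polynomials of Van Assche, Geronimo, and Kuijlaars \cite{vgk01}. In that formulation the $(1,1)$-entry of the solution $Y(z)$ is exactly $P_{n,n}(z)$, so an asymptotic expansion of $Y$ at the point $z/(\sqrt{2}n^{3/2})$ will produce \eqref{MehlerHeine}. Because $a_n \to -1$ and the phase transition of Section~\ref{subsectionphasetransition} is located at precisely $a=-1$, the standard steepest descent must be modified in a neighborhood of $z=0$, and this modification is where $Q(z;\tau)$ enters.

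I would begin with the usual transformations $Y \mapsto T \mapsto S$. The first uses the $g$-functions of the equilibrium measures $\nu_1,\nu_2$ for the parameter $a_n$ to normalize $Y$ to $I + O(1/z)$ at infinity. The second opens lenses around $(a_n,0)$ and $(0,1)$ using a factorization of the weight; on the lens boundaries the jumps then decay exponentially to the identity, uniformly away from $\{a_n,0,1\}$. A short computation using \eqref{expressions} shows that for $a=a_n$ the fourth endpoint $b_n$ satisfies $b_n = \tfrac{2\sqrt{2}\,\tau^3}{27\, n^{3/2}}(1+o(1))$, so that the soft edge $b_n$ collides with the hard edge at $0$ on precisely the scale $z \mapsto z/(\sqrt{2}n^{3/2})$ of the theorem.

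Next come the parametrices. Away from the endpoints a global parametrix $N$ is built on the Riemann surface of $y^2=(z-a_n)(z-b_n)(z-1)$; near the hard edges $a_n$ and $1$ the classical Bessel parametrices apply. Near $z=0$, however, neither the Airy nor the Bessel model is appropriate, since $b_n$ merges with $0$. I would construct a new local parametrix $P^{(0)}$ as a matrix whose entries are three contour integrals of the type
\[ \int_{\Gamma} t^{-\beta-1} \exp\!\Bigl(-\tfrac{\zeta^2}{2t^2}-\tfrac{\tau \zeta}{t}+t\Bigr)\, dt, \qquad \zeta = \sqrt{2}\,n^{3/2} z, \]
where the contours $\Gamma$ in the $t$-plane are chosen so that Cauchy's theorem produces the correct jumps on the image of the local lens contour, and so that steepest-descent analysis of the integrals in the three relevant sectors yields the required matching $P^{(0)}(z) = N(z)(I + O(n^{-1/6}))$ on a small circle around $0$. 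The size $n^{-1/6}$ is the expected mismatch given the inverse cube-root blow-up of the equilibrium density at the Kaliaguine point. The contour $\Gamma_0$ of Figure~\ref{figuregamma0} should be the one that selects the $(1,1)$-entry of $P^{(0)}$, i.e.\ precisely the polynomial $P_{n,n}$.

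Finally, setting $R = S \cdot (\text{parametrix})^{-1}$ gives a small-norm RH problem with $\|R-I\|_{L^\infty \cap L^2}=O(n^{-1/6})$, so that $R = I + O(n^{-1/6})$ by standard small-norm theory. Unwinding $R \mapsto S \mapsto T \mapsto Y$ at the point $z/(\sqrt{2}n^{3/2})$ and reading off the $(1,1)$-entry produces \eqref{MehlerHeine}, with the prefactor $(-1)^n C_n$ absorbing the contributions from the $g$-functions and from the value of the outer parametrix at $0$. The main obstacle is unquestionably the construction of $P^{(0)}$: identifying the correct model Riemann-Hilbert problem for the coalescing edges $0$ and $b_n$, choosing the three contours in the $t$-plane so that the resulting integrals provide both the right jump matrices and the right large-$z$ asymptotics in each sector, and controlling the matching uniformly in $\tau$ as $a_n\to -1$. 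Everything else is now a routine, if lengthy, application of the steepest-descent technique.
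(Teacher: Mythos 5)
Your outline correctly identifies the overall strategy (Deift--Zhou steepest descent with a new local parametrix at $0$ built from contour integrals), and the idea that $Q$ comes from the $(1,1)$-entry of the local model is right. However, there are several concrete gaps, and one of them is fatal to the argument as you have written it.

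First, you propose to normalize with the $g$-functions of the standard equilibrium measures $\nu_1,\nu_2$, whose support $[\,a_n,b_n\,]$ has a moving endpoint $b_n \to 0$. The paper deliberately avoids this: it uses the \emph{modified} equilibrium problem with signed measures $\mu_1,\mu_2$ that have fixed full supports $[a,0]$ and $[0,1]$. With your choice the support geometry changes with $n$ on the scale $n^{-3/2}$, and the lens-opening, the domain of definition of the $g$-functions, and the local conformal map at $0$ all become $n$-dependent in an essential way. This is not impossible in principle, but it is a genuinely different (and substantially harder) route than what you sketch; you would need to prove uniform control in $n$ of all these objects, which your proposal does not address.

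Second, the outer parametrix cannot live on $y^2=(z-a_n)(z-b_n)(z-1)$. The RH problem is $3\times3$ with jumps permuting three columns, so the natural global parametrix is built on a \emph{three-sheeted} Riemann surface (here of genus zero, given by the cubic \eqref{polequation}), with the three $\xi$-functions $\xi_0,\xi_1,\xi_2$ furnishing the entries. A hyperelliptic double cover has the wrong sheet structure and cannot produce the jump relations \eqref{jumpsn}.

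Third, and most importantly, your claimed matching $P^{(0)}(z)=N(z)\bigl(I+\mathcal{O}(n^{-1/6})\bigr)$ on $\partial U_0$ is false. With a shrinking disk of radius $n^{-1/2}$ (which your proposal does not specify but is forced), the best one can achieve is
\[
P_0(z)\,N(z)^{-1} = I + Z_n(z;a) + \mathcal{O}(n^{-1/6}), \qquad z\in\partial U_0,
\]
where $Z_n$ is bounded but does \emph{not} tend to zero. With only that, the transformation $R = S\cdot(\text{parametrix})^{-1}$ does not produce a small-norm RH problem, and your appeal to standard small-norm theory collapses. The paper resolves this via an additional transformation $R_0 \mapsto R$ that exploits the rank-one nilpotent structure of $Z_n$ (in particular $Z_n(z_1)Z_n(z_2)=0$), conjugating by $I - Z_n^{(0)}/z$ to remove the pole and make the jump on $\partial U_0$ tend to $I$. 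Without this extra step, or an equivalent device, there is no small-norm estimate $R = I + \mathcal{O}(n^{-1/6})$ and hence no way to conclude \eqref{MehlerHeine}. This is the central new idea of the paper and it is missing from your proposal.
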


\begin{figure}[t]
\begin{center}
\begin{tikzpicture}[scale = 1.5, line width = 1]
\filldraw (0,0) circle (.03) node[above left]{$0$};
\draw [dashed] (0,0) -- (-1.5,0);
\draw [postaction = decorate, decoration = {markings, mark = at position .5 with {\arrow{stealth};}}] (.5,.8) arc (90:-90:.5 and .8);
\draw (.5,.8) .. controls (0,.8) and (-.5,.5) .. (-1,.5);
\draw (.5,-.8) .. controls (0,-.8) and (-.5,-.5) .. (-1,-.5);
\draw [dotted] (-1,.5) -- (-1.3,.5);
\draw [dotted] (-1,-.5) -- (-1.3,-.5);
\end{tikzpicture}
\caption{The contour $\Gamma_{0}$ appearing in the Mehler-Heine formula \eqref{MehlerHeine}.
The dashed line denotes the cut of $t^{-\beta -1}$.\label{figuregamma0} }
\end{center}
\end{figure}
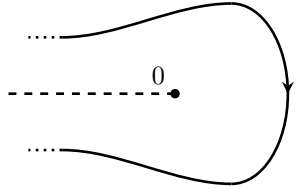

The function $Q$ in \eqref{contourQ} is an entire solution of the third order differential equation
\begin{equation}
    z^{2} Q'''(z) + 2(\beta +1) z Q''(z) + (\beta^{2} + \beta - \tau z) Q'(z) + (z-\tau \beta) Q(z) = 0 \label{diffeqQ}
\end{equation}
The differential equation has a regular singular point at $z=0$, with associated
Frobenius indices equal to $0$, $-\beta$ and $-\beta +1$.
For $\beta > -1$, there is a one-dimensional space of entire solutions to \eqref{diffeqQ}, unless $\beta = 0$ in which case this space is two-dimensional.  In case $\beta \neq 0$ we may characterize $Q$ as the unique entire solution of $\eqref{diffeqQ}$ satisfying
\begin{equation}
Q(0) = \frac{2 \pi}{\Gamma(\beta +1)}.
\end{equation}



The constant $C_{n}$ in \eqref{MehlerHeine} is given by
\begin{equation} \label{constantCn}
C_{n} = \frac{ e^{c_{1} + c_{2}}} {\sqrt{3\pi}} \frac{2^{\beta}}{3^{\frac{ \alpha + 2 \beta + \gamma }{2}}}  e^{-\tau^{2}} n^{\beta + \frac{1}{2}} e^{-\sqrt{2} \tau n^{\frac{1}{2}}} \left( \frac{4}{27}\right)^{n}
\end{equation}
where $c_{1}$ and $c_{2}$ are positive constants defined in \eqref{formulacj} below.
They are determined by the analytic factors $h_1$ and $h_2$ in the weights \eqref{definitionweights},
and are independent of $n$.  For simple analytic factors $h_{1}$ and $h_2$ one can evaluate $c_{1}$ and $c_{2}$ explicitly.
For example, if $h_{k}$ is a constant function, then $c_{k} = 0$.

\begin{remark}
In the case $\tau = 0$  the function $Q(z;\tau)$ from \eqref{contourQ} can be written
as a generalized hypergeometric function
\[ Q(z;0) = \frac{2\pi}{\Gamma(\beta+1)} \mathop{{}_{0}} \! F_2 \! \left( - ; \frac{\beta+1}{2}, \frac{\beta+2}{2} ; -\frac{z^2}{8} \right). \] 
This function was already found by  Sorokin \cite{sor89} in a Mehler-Heine formula
for certain  multiple orthogonal polynomials of Laguerre-type. More recently, it was
obtained for Jacobi-Angelesco multiple orthogonal polynomials by  Tulyakov \cite{tul09} and
Takata \cite{tak09}, who both prove Theorem \ref{maintheorem} 
for the case $\tau = 0$ and weights \eqref{definitionweights} with $h_1 \equiv 1$, $h_2 \equiv 1$. 
\end{remark}

\begin{remark}
In \cite{vaco01} an explicit formula for the Jacobi-Angelesco polynomial $P_{n,n}^{(\alpha,\beta,\gamma)}$ is given, that is,
the multiple orthogonal polynomial with weights \eqref{JacobiAngelesco}, namely
\begin{multline}
\binom{3n + \alpha + \beta + \gamma}{n} P_{n,n}^{(\alpha, \beta, \gamma)}(z;a)  \\
  = \sum_{k=0}^{n} \sum_{j=0}^{n-k} \binom{n + \alpha }{k} \binom{n + \beta}{j} \binom{n + \gamma}{n-k-j} (z-a)^{n-k} z^{n-j} (z-1)^{k+j}.
\end{multline}
Applying Stirling's approximation formula to the binomial coefficients, one can then derive
that with $a_n$ given by  \eqref{an},
\begin{multline}
    P_{n,n}(0;a_{n}) = (-1)^{n} \frac{2\pi}{\Gamma(1+\beta)}  \frac{1}{\sqrt{3\pi}} \left( \frac{2}{3}\right)^{\alpha + \beta + \gamma} e^{-\tau^{2}} \\
        \times n^{\beta + \frac{1}{2}} e^{-\sqrt{2}\tau n^{\frac{1}{2}}} \left(\frac{4}{27}\right)^{n}    \left(1 + \mathcal{O}\left(n^{-\frac{1}{2}}\right)\right)
\end{multline}
which is consistent with \eqref{constantCn}, since in this case one can evaluate $c_{1}$ and $c_{2}$ to be
\begin{equation}
c_{1} =  \gamma (\log 2 - \tfrac{1}{2} \log 3), \qquad c_{2} = \alpha( \log 2 - \tfrac{1}{2} \log 3).
\end{equation}
\end{remark}

\subsection{Overview of the rest of the paper}

We use two main tools to prove the Theorem~\ref{maintheorem}, namely Riemann-Hilbert (RH) problems
and modified equilibrium problems. These will be discussed in the next two sections.

The RH problems are of size $3 \times 3$. We first discuss the RH problem for the multiple
orthogonal polynomial $P_{n_1,n_2}$ with the modified Jacobi weights. The steepest descent
analysis of the paper will lead to a local parametrix that is built out of a local model RH problem,
that is discussed in detail in Section \ref{sec:localRHP}.
This model RH problem is new,
although it is related to another model RH problem studied recently in a different
connection \cite{kmw10}.

The modified equilibrium problem is related to a Riemann surface in Section \ref{sec:modeq}.
The same Riemann surface will also play a role in the construction of the outer parametrix
in the steepest descent analysis.

Section \ref{sec:sda} is the bulk of the paper. It contains the steepest descent
analysis of the RH problem for multiple orthogonal polynomials. It follows the usual
steps in such an analysis as e.g.\ done in \cite{dei99, dkmvz99a,kmvv04}. In a first
transformation we use the $g$-functions coming from the modified equilibrium problem to
normalize the RH problem at infinity. The next transformation is the opening
of lenses. Then we construct outer and local parametrices that are used in
the next transformation. After this transformation one typically arrives at a RH problem
that is normalized at infinity, and for which the jump matrices all tend to the identity matrix
as $n \to \infty$. It is a curious fact that this does not happen in the present
paper. The jump matrix on a circle around $0$ will take the form
\[ I + Z_n(z) + \mathcal{O}(n^{-1/6}) \]
where $Z_n(z)$ is bounded on the circle, but it does not tend to $0$ as $n \to \infty$.
We can resolve this problem by making another transformation, where we use the special
structure of the matrices $Z_n(z)$. This extra step in the steepest descent analysis
is also used in the recent papers \cite{dkz10,kmw10}, which makes it reasonable to suspect
that the need for such an extra step is a more common phenomenon in the
steepest descent analysis of larger size RH problems
in a critical situation.

The proof of Theorem \ref{maintheorem} is given in the final section \ref{sec:proof}.
Here we unravel all the previous transformations, and we pay special attention
to the behavior around $0$.

In a forthcoming paper we plan to analyze the determinantal point process
that is associated with the modified Jacobi-Angelesco weights. This is an example
of a multiple orthogonal polynomial ensemble \cite{kui10a} where
half of the particles are on $[a,0]$ and the other half are on $[0,1]$. There is
again a critical behavior at $0$ as $a$ varies around $-1$, and we will find
a new family of limiting correlation kernels in this setting that are also
related to the solution of the local model RH problem.

\section{First tool: RH problems}

\subsection{The Riemann-Hilbert problem}

The multiple orthogonal polynomial \eqref{definitionpn1n2} are characterized in terms of
a $3 \times 3$ matrix valued Riemann-Hilbert problem (RH problem) due to \cite{vgk01}. We use the RH problem for the
asymptotic analysis to derive our results.

We work with the modified Jacobi weights $w_{1}$ and $w_{2}$ \eqref{definitionweights}
and we take a general multi-index $(n_{1}, n_{2})$.
The RH problem then asks for a function $Y : \mathbb C \setminus [a,1] \to \mathbb C^{3 \times 3}$ such that
\begin{itemize}
\item $Y$ is analytic on $\mathbb{C} \setminus [a,1]$,
\item $Y$ has continuous boundary values $Y_{\pm}$ on $(a,0)$ and $(0,1)$ satisfying a jump relation $Y_{+} = Y_{-} J_{Y}$ with jump matrix
\begin{equation} \label{jY}
    J_{Y}(x) = \begin{pmatrix} 1 & w_{1}(x) & w_{2}(x) \\ 0 & 1 & 0 \\ 0 & 0 & 1 \end{pmatrix},
        \quad x \in (a,0) \cup (0,1), \end{equation}
       where it is understood that $w_1(x) \equiv 0$ on $(0,1)$ and $w_2(x) \equiv 0$ on $(-a,0)$,
\item $Y$ has the asymptotic behavior
\begin{equation}
        Y(z) = \left( I + \mathcal{O}\left(\frac{1}{z} \right)\right)
            \begin{pmatrix} z^{n_1+n_2} & 0 & 0 \\ 0 & z^{-n_1} & 0 \\ 0 & 0 & z^{-n_2} \end{pmatrix}
\end{equation}
as $z \to \infty$, and
\item $Y$ has the following behavior at the endpoints of the intervals
\begin{multline} \label{Yneara}
    Y(z) = \mathcal{O} \begin{pmatrix} 1 & \epsilon(z) & 1 \\
        1 & \epsilon(z) & 1 \\ 1 & \epsilon(z) & 1 \\
        \end{pmatrix}, \textrm{ as } z  \to a,  \\
        \textrm{where } \epsilon(z) =
    \begin{cases}  |z-a|^{\alpha} & \textrm{ if } \alpha < 0, \\
    \log|z-a| & \textrm{ if } \alpha =0, \\
    1 & \textrm{ if } \alpha > 0 \end{cases}
    \end{multline}
\begin{multline} \label{Ynear1}
    Y(z) =  \mathcal{O}\begin{pmatrix}  1 & 1 & \epsilon(z) \\
            1 & 1 & \epsilon(z) \\ 1 & 1 & \epsilon(z) \\
            \end{pmatrix}, \textrm{ as } z  \to 1, \\
        \textrm{where } \epsilon(z) =
    \begin{cases}  |z-1|^{\gamma} & \textrm{ if } \gamma < 0, \\
    \log|z-1| & \textrm{ if } \gamma =0, \\
    1 & \textrm{ if } \gamma > 0 \end{cases}
            \end{multline}
\begin{multline} \label{Ynear0}
    Y(z) = \mathcal{O} \begin{pmatrix}  1 & \epsilon(z) & \epsilon(z) \\
            1 & \epsilon(z) & \epsilon(z) \\ 1 & \epsilon(z) & \epsilon(z)
            \end{pmatrix}, \textrm{ as } z  \to 0,
             \\
        \textrm{where } \epsilon(z) =
    \begin{cases}  |z|^{\beta} & \textrm{ if } \beta < 0, \\
    \log|z| & \textrm{ if } \beta =0, \\
    1 & \textrm{ if } \beta > 0, \end{cases}
             \end{multline}
where the $\mathcal O$ is taken entry-wise.
    \end{itemize}

As in \cite{dei99,vgk01} one can show that there is a unique solution of the RH problem, see also \cite{kmvv04}
for the role of the endpoint conditions \eqref{Yneara}, \eqref{Ynear1} and \eqref{Ynear0}.
The first column contains the multiple orthogonal polynomials of type II with respect to
multi-indices $(n_{1},n_{2})$, $(n_{1}-1,n_{2})$ and $(n_{1},n_{2}-1)$
and the other columns contain Cauchy transforms of the polynomials times the weights.
Indeed, the solution is equal to
\begin{equation}
    Y(z) = \begin{pmatrix}
    P_{n_1,n_2}(z) & \frac{1}{2\pi i} \int_{a}^{0} \frac{P_{n_1,n_2}(x) w_1(x)}{x-z} dx &
    \frac{1}{2\pi i} \int_{0}^{1} \frac{P_{n_1,n_2}(x) w_2(x)}{x-z} dx \\
    d_1 P_{n_1-1,n_2}(z) & \frac{d_1}{2\pi i} \int_{a}^{0} \frac{P_{n_1-1,n_2}(x) w_1(x)}{x-z} dx & \frac{d_1}{2\pi i} \int_{0}^{1} \frac{P_{n_1-1,n_2}(x) w_2(x)}{x-z} dx \\
    d_2 P_{n_1,n_2-1}(z) & \frac{d_2}{2\pi i} \int_{a}^{0} \frac{P_{n_1,n_2-1}(x) w_1(x)}{x-z} dx & \frac{d_2}{2\pi i} \int_{0}^{1} \frac{P_{n_1,n_2-1}(x) w_2(x)}{x-z} dx  \end{pmatrix} \label{pn1n2fromy}
    \end{equation}
    for certain non-zero constants $d_1$ and $d_2$.
The inverse matrix $Y^{-1}$ contains multiple orthogonal polynomials of type I.

We apply the Deift-Zhou steepest descent analysis to the RH problem for $Y$ with $n_1 = n_2 = n$ in the
limit where $n \to \infty$ and $a = a_n = -1 + \frac{\sqrt{2} \tau}{n^{1/2}} \to -1$.
Via a number of transformations
\[ Y  \mapsto T \mapsto S \mapsto R \]
we arrive at a matrix valued function $R$ that tends to the identity matrix as $n \to \infty$.

Particularly relevant references on the steepest descent method for this paper are \cite{blku07,dkmvz99a,kmvv04}, see also \cite{bff09}.

\subsection{The local model RH problem}
\label{sec:localRHP}

At a crucial step in the steepest descent analysis we need to do a local analysis
at the point $0$. We have to construct there a local parametrix
that will be built out of certain special functions. In non-critical situations this
can be done with Bessel functions of order $\beta$, but in the critical regime that
we are interested in we need functions that satisfy a third order linear differential
equation. They are combined in a $3 \times 3$ matrix valued RH problem that we call the
local model RH problem and that we describe next.

\begin{figure}[t]
\begin{center}
\begin{tikzpicture}[scale = 3,line width = 1,decoration = {markings, mark = at position .5 with {\arrow{stealth};}} ]
\draw [postaction = decorate] (-1,0) -- (0,0);
\draw [postaction = decorate] (0,0) -- (1,0);
\draw [postaction = decorate] (-.707106781,-.707106781) -- (0,0);
\draw [postaction = decorate] (0,0) -- (.707106781,.707106781);
\draw [postaction = decorate] (-.707106781,.707106781) -- (0,0);
\draw [postaction = decorate] (0,0) -- (.707106781,-.707106781);
\draw (0,0) node[above]{$0$};
\draw [postaction = decorate] (.2,0) arc  (0:45:.2);
\draw (.2,0) node[above right]{$\frac{\pi}{4}$};
\draw (-1,0) node[left,font=\footnotesize]{ $\begin{pmatrix} 0 & e^{\beta \pi i} & 0 \\ -e^{\beta \pi i} & 0 & 0 \\ 0 & 0 & 1 \end{pmatrix}$};
\draw (-.707106781,.707106781) node[left,font=\footnotesize]{$\begin{pmatrix} 1 & 0 & 0 \\ e^{\beta \pi i} & 1 & 0 \\ 0 & 0 & 1 \end{pmatrix}$};
\draw (.707106781,.707106781) node[right,font=\footnotesize]{$\begin{pmatrix} 1 & 0 & 0 \\ 0 & 1 & 0 \\ 1 & 0 & 1 \end{pmatrix}$};
\draw (1,0) node[right,font=\footnotesize]{ $\begin{pmatrix} 0 & 0 & 1 \\ 0 & 1 & 0 \\ -1 & 0 & 0 \end{pmatrix}$};
\draw (.707106781,-.707106781) node[right,font=\footnotesize]{$\begin{pmatrix} 1 & 0 & 0 \\ 0 & 1 & 0 \\ 1 & 0 & 1 \end{pmatrix}$};
\draw (-.707106781,-.707106781) node[left,font=\footnotesize]{$\begin{pmatrix} 1 & 0 & 0 \\ e^{-\beta \pi i} & 1 & 0 \\ 0 & 0 & 1 \end{pmatrix}$};
\end{tikzpicture}
\caption{The contour $\Sigma_{\Psi}$ and the jump matrices of $\Psi$.}
\label{figurejumpspsi}
\end{center}
\end{figure}
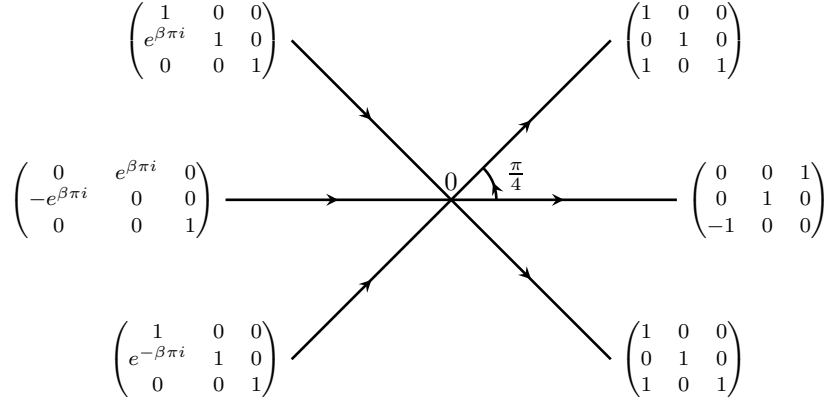

In the local model RH problem we are looking for a $3 \times 3$ matrix valued function $\Psi$ that
depends on two parameters $\beta > -1$ and $\tau \in \mathbb R$. Since $\beta$ is considered fixed,
we do not emphasize the dependence on $\beta$. We may write $\Psi(z;\tau)$ to emphasize the
dependence on $\tau$. Then $\Psi$ should satisfy the following.
\begin{itemize}
\item $\Psi$ is defined and analytic in $\mathbb{C} \setminus \Sigma_{\Psi}$ where
$\Sigma_{\Psi}$ is the contour consisting of the six oriented rays through the origin as shown
in Figure~\ref{figurejumpspsi},
\item $\Psi$ has continuous boundary values on $\Sigma_{\Psi}$ that satisfy the jump
condition
\[ \Psi_{+}(z) = \Psi_{-}(z) J_{\Psi}(z) \qquad z \in \Sigma_{\Psi}, \]
where the jump matrices $J_{\Psi}$ are also given in Figure~\ref{figurejumpspsi}.
\end{itemize}
Thus the parameter $\beta$ appears in the jump condition.

The dependence on $\tau$ is in the asymptotic condition as $z \to \infty$.
We write throughout this paper
\[ \omega = e^{2\pi i/3}. \]
\begin{itemize}
\item As $z \to \infty$ with $\pm \im z > 0$, we have
\begin{equation}
    \Psi(z) =  \sqrt{ \frac{ 2\pi}{3}} e^{\frac{\tau^{2}}{6}} z^{\frac{\beta}{3}}
    \begin{pmatrix} z^{\frac{1}{3}} & 0 & 0 \\ 0 & 1 & 0 \\ 0 & 0 & z^{-\frac{1}{3}}  \end{pmatrix}
    \Omega_{\pm}
    \left(I + \mathcal{O}\left(z^{-\frac{1}{3}}\right)\right) B_{\pm} e^{\Theta(z; \tau)}, \label{asymptoticexpansionpsi}\end{equation}
where $\Omega_{\pm}$, $B_{\pm}$ and $\Theta(z;\tau)$ are defined by
\begin{equation}  \label{definitionomega}
    \begin{aligned}
    \Omega_+ & := \begin{pmatrix} - \omega^{2} & 1 & \omega \\ 1 & -1 & -1 \\ -\omega & 1 & \omega^{2}   \end{pmatrix},
    & \quad B_+ & := \begin{pmatrix} e^{\frac{\beta\pi i}{3}} & 0 &0 \\ 0 & 1 & 0 \\ 0 & 0 & e^{-\frac{\beta \pi i}{3}} \end{pmatrix}, \\
    \Omega_- & := \begin{pmatrix} \omega & 1 & \omega^{2} \\ -1 & -1 & -1 \\ \omega^{2} & 1 & \omega \end{pmatrix},
    & \quad B_- & :=
     \begin{pmatrix} e^{-\frac{\beta \pi i}{3}} & 0 & 0 \\ 0 & 1 & 0 \\ 0 & 0 & e^{\frac{\beta \pi i}{3}} \end{pmatrix},
      \end{aligned}
\end{equation}
and
\begin{equation} \label{definitionTheta}
    \Theta(z; \tau) :=
    \begin{cases} \diag \left( \theta_{1}(z;\tau), \theta_{3}(z;\tau), \theta_{2}(z;\tau) \right) & \textrm{ for } \im z > 0, \\
    \diag \left( \theta_{2}(z;\tau), \theta_{3}(z;\tau), \theta_{1}(z;\tau) \right) & \textrm{ for } \im z < 0, \end{cases}
    \end{equation}
and the $\theta_{k}$ are defined by
\begin{equation}
    \theta_{k}(z;\tau) := -\frac{3}{2} \omega^{k} z^{\frac{2}{3}} - \tau \omega^{2k} z^{\frac{1}{3}}
    \qquad \textrm{ for } k =1,2,3. \label{definitionthetak}
\end{equation}
The expansion \eqref{asymptoticexpansionpsi} for $\Psi(z)$ as $z \to \infty$  is valid uniformly for $\tau$ in a bounded set.
\end{itemize}

We construct $\Psi(z;\tau)$ out of solutions of the third order linear differential equation
\begin{equation}
    zq'''(z) - \beta q''(z) -  \tau q'(z) + q(z) = 0.
    \label{diffeqq}
    \end{equation}
Note that this is not the same differential equation as \eqref{diffeqQ}. However, the
two are related, since if $q$ satisfies \eqref{diffeqq} then
\[ Q(z) = z^{-\beta} q''(z) \]
satisfies \eqref{diffeqQ}.

\begin{figure}[t]
\begin{center}
\begin{tikzpicture}[scale = 2,line width = 1]
\filldraw (0,0) circle (.01) node[above]{$0$};
\draw [dashed] (0,0) -- (1,0);
\draw [postaction = decorate, decoration = {markings, mark = at position .5 with {\arrowreversed{stealth};}}] (-1,0) -- (0,0);
\draw (-.5,0) node[above]{$\Gamma_{3}$};
\draw [dotted] (-1.2,0) -- (-1,0);
\draw (0,0) arc (-90:90:.5 and .3);
\draw [postaction = decorate, decoration = {markings, mark = at position .75 with {\arrow{stealth};}}] (-1,.6) -- (0,.6);
\draw [dotted] (-1.2,.6) -- (-1,.6);
\draw (-.3,.6) node[above]{$\Gamma_{2}$};
\draw (0,0) arc (90:-90:.5 and .3);
\draw [postaction = decorate, decoration = {markings, mark = at position .75 with {\arrowreversed{stealth};}}] (-1,-.6) -- (0,-.6);
\draw [dotted] (-1.2,-.6) -- (-1,-.6);
\draw (-.3,-.6) node[above] {$\Gamma_{1}$};
\end{tikzpicture}
\caption{The contours $\Gamma_{1},\Gamma_{2}$ and $\Gamma_{3}$ in the $t$-plane.
    The dashed line denotes the cut of $t^{-\beta -3}$.}
\label{figurecontoursgamma}
\end{center}
\end{figure}
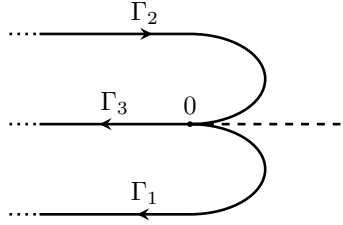

The differential equation \eqref{diffeqq} has solutions in the form of contour integrals
\begin{equation}
    q(z) = \int_{\Gamma} t^{-\beta - 3} e^{\frac{\tau}{t} - \frac{1}{2t^{2}} + zt} \,dt ,
    \end{equation}
where $\Gamma$ is an appropriate contour so that the integrand  vanishes at the endpoints of the contour $\Gamma$.
Define three contours $\Gamma_{1}, \Gamma_{2}$ and $\Gamma_{3}$ as in Figure \ref{figurecontoursgamma},
and define for $z$ with $\re z > 0$
\begin{equation}
    q_{j}(z) = \int_{\Gamma_{j}} t^{-\beta - 3} e^{\frac{\tau}{t} - \frac{1}{2t^{2}} + zt} \,dt,
    \qquad j=1,2,3,  \label{integralrepresentationsq}
    \end{equation}
where we choose the branch of $t^{-\beta -3}$ with a cut on the positive real axis, i.e.,
\[ t^{-\beta-3} = |t|^{-\beta-3} e^{(-\beta-3) i \arg t}, \qquad 0 < \arg t < 2 \pi. \]
The integrals \eqref{integralrepresentationsq} only converge for $z$ with
$\re z > 0$, but the functions $q_{j}$ can be continued analytically using contour deformations.
Branch points for the $q_{j}$-functions are $0$ and $\infty$ and we take the analytic
continuation to $\mathbb C \setminus (-\infty,0]$, thus with a branch cut on the negative real axis.

\begin{definition}
Define  $\Psi$ in the upper half plane by
\begin{equation}
    \Psi = \begin{cases}
        \begin{pmatrix} e^{2\beta \pi i} q_{1} & e^{\beta \pi i} q_{3}  & q_{2} \\
        e^{2\beta \pi i} q_{1}' & e^{\beta \pi i} q_{3}'  & q_{2}' \\
        e^{2\beta \pi i} q_{1}'' & e^{\beta \pi i} q_{3}''  & q_{2}'' \end{pmatrix} &
         0 < \arg z < \frac{\pi}{4}, \\
         \begin{pmatrix} e^{2\beta \pi i} q_{1} + q_{2} & e^{\beta \pi i} q_{3}  & q_{2} \\
         e^{2\beta \pi i} q_{1}' + q_{2}' & e^{\beta \pi i} q_{3}'  & q_{2}' \\
         e^{2\beta \pi i} q_{1}'' + q_{2}'' & e^{\beta \pi i} q_{3}''  & q_{2}'' \end{pmatrix}, &
         \frac{\pi}{4} < \arg z < \frac{3\pi}{4} \\
         \begin{pmatrix} e^{2\beta \pi i} q_{1} + q_{2} -e^{2\beta \pi i} q_{3} & e^{\beta \pi i} q_{3}  & q_{2} \\
         e^{2\beta \pi i} q_{1}' + q_{2}' -e^{2\beta \pi i} q_{3}' & e^{\beta \pi i} q_{3}'  & q_{2}' \\
         e^{2\beta \pi i} q_{1}'' + q_{2}'' -e^{2\beta \pi i} q_{3}'' & e^{\beta \pi i} q_{3}''  & q_{2}'' \end{pmatrix}, &
         \frac{3\pi}{4} < \arg z < \pi,
         \end{cases} \label{definitionpsi1}
         \end{equation}
and in the lower half plane by
\begin{equation}
    \Psi = \begin{cases}
        \begin{pmatrix} q_{2} & e^{\beta \pi i} q_{3} & -e^{2\beta \pi i} q_{1} \\
         q_{2}' & e^{\beta \pi i} q_{3}' & -e^{2\beta \pi i} q_{1}' \\
         q_{2}'' & e^{\beta \pi i} q_{3}'' & -e^{2\beta \pi i} q_{1}'' \end{pmatrix}, &
         -\frac{\pi}{4} < \arg z < 0, \\
         \begin{pmatrix} q_{2} + e^{2\beta \pi i} q_{1} & e^{\beta \pi i} q_{3} & -e^{2\beta \pi i} q_{1} \\
         q_{2}' + e^{2\beta \pi i} q_{1}' & e^{\beta \pi i} q_{3}' & -e^{2\beta \pi i} q_{1}' \\
         q_{2}'' + e^{2\beta \pi i} q_{1}''& e^{\beta \pi i} q_{3}'' & -e^{2\beta \pi i} q_{1}'' \end{pmatrix}, &
         -\frac{3\pi}{4} < \arg z <- \frac{\pi}{4} \\
         \begin{pmatrix} e^{2\beta \pi i} q_{1} + q_{2} + q_{3} & e^{\beta \pi i} q_{3} & -e^{2\beta \pi i} q_{1} \\
         e^{2\beta \pi i} q_{1}' + q_{2}' + q_{3}' & e^{\beta \pi i} q_{3}' & -e^{2\beta \pi i} q_{1}' \\
         e^{2\beta \pi i} q_{1}'' + q_{2}'' + q_{3}'' & e^{\beta \pi i} q_{3}'' & -e^{2\beta \pi i} q_{1}''  \end{pmatrix}, &
         -\pi < \arg z < - \frac{3\pi}{4}. \end{cases}
         \label{definitionpsi2}
         \end{equation}
 \end{definition}

It is then an easy exercise to check that $\Psi$ indeed satisfies the required jumps $\Psi_{+} = \Psi_{-} J_{\Psi}$
on the rays $\arg z = 0, \pm \frac{\pi}{4}, \pm \frac{3\pi}{4}$.  For the jump on the negative real axis however,
we have to take into consideration the behavior of the functions $q_{j}(z)$ as $z$ circles around $0$.  Using contour deformations one can show that for $z < 0$:
\begin{equation}
    \begin{pmatrix} q_{1,+}(z) \\ q_{2,+}(z) \\ q_{3,+}(z) \end{pmatrix} =
    \begin{pmatrix} 1 +e^{2\beta \pi i} & 1 & 0 \\ - e^{2\beta \pi i} & 0 & 0 \\ e^{2\beta \pi i} & 1 & 1 \end{pmatrix}
    \begin{pmatrix} q_{1,-}(z) \\ q_{2,-}(z) \\ q_{3,-}(z) \end{pmatrix} .\label{monodromyq}
    \end{equation}
The jump of $\Psi$ on the negative real axis follows from this in a straightforward way.

As for the asymptotic behavior, we have
\begin{proposition}
The function $\Psi$ defined in \eqref{definitionpsi1} and \eqref{definitionpsi2} satisfies the
asymptotic condition \eqref{asymptoticexpansionpsi}.
\end{proposition}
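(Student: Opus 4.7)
The plan is to apply the classical saddle point method to the nine contour integrals $q_j^{(m)}(z) = \int_{\Gamma_j} t^{m-\beta-3} e^{\phi(t;z,\tau)}\,dt$, with $\phi(t;z,\tau) := zt - 1/(2t^2) + \tau/t$ and $m=0,1,2$, and then to read off how these combine, via \eqref{definitionpsi1}--\eqref{definitionpsi2}, into the matrix prescribed in \eqref{asymptoticexpansionpsi}. The critical equation $zt^3 - \tau t + 1 = 0$ has, for large $z$, three roots
\[ t_k = -\omega^k z^{-1/3} - \tfrac{\tau}{3}\omega^{2k} z^{-2/3} + \mathcal{O}(z^{-1}), \qquad k=1,2,3. \]
Using the saddle identity $\tau = zt^2 + 1/t$ to rewrite $\phi(t) = 2zt + 1/(2t^2)$ on the critical set and expanding each term to order $z^0$, I obtain
\[ \phi(t_k) = -\tfrac{3}{2}\omega^k z^{2/3} - \tau\omega^{2k} z^{1/3} + \tfrac{\tau^2}{6} + \mathcal{O}(z^{-1/3}) = \theta_k(z;\tau) + \tfrac{\tau^2}{6} + \mathcal{O}(z^{-1/3}), \]
which accounts simultaneously for the exponential $e^{\Theta(z;\tau)}$ and for the scalar prefactor $e^{\tau^2/6}$ in \eqref{asymptoticexpansionpsi}. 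A parallel expansion gives $\phi''(t_k) = -3\omega^{2k} z^{4/3}(1+\mathcal{O}(z^{-1/3}))$, so the Gaussian factor $\sqrt{-2\pi/\phi''(t_k)}$ contributes the numerical constant $\sqrt{2\pi/3}$ together with a $z^{-2/3}$; combined with $t_k^{m-\beta-3} \sim (-\omega^k)^{m-\beta-3} z^{(\beta+3-m)/3}$, this reproduces the power $z^{\beta/3 + (2-i)/3}$ encoded in $z^{\beta/3}\operatorname{diag}(z^{1/3},1,z^{-1/3})$ for row $i = m+1$ of $\Psi$.

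The second task is a Stokes analysis: for each of the sectors delimited by $\arg z \in \{0,\pm\pi/4,\pm 3\pi/4,\pi\}$, I would deform each $\Gamma_j$ onto the steepest descent paths in the landscape of $\re\phi(\cdot;z,\tau)$ and identify the saddle visited. A direct inspection of the level curves shows that for $0<\arg z<\pi/4$ the contours $\Gamma_1, \Gamma_3, \Gamma_2$ pass through the saddles $t_1, t_3, t_2$ respectively without crossing the branch cut of $t^{-\beta-3}$. As $\arg z$ crosses $\pi/4$ and then $3\pi/4$ the descent picture for $\Gamma_1$ begins to pick up previously subdominant saddle contributions, and the piecewise definition \eqref{definitionpsi1} is engineered so that the additional terms $q_2$ and $-q_3$ in the first column precisely cancel those new contributions, leaving $t_1$ as the effective saddle of the first column throughout the upper half plane. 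The monodromy identity \eqref{monodromyq} then settles the behaviour across the negative real axis, and the mirror analysis in the lower half plane produces the saddle order $(t_2,t_3,t_1)$ prescribed by the second case of \eqref{definitionTheta}.

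Combining both steps, each entry of $\Psi$ in the sector $0<\arg z<\pi/4$ admits the leading form
\[ \Psi_{ij}(z) = \sqrt{\tfrac{2\pi}{3}}\, e^{\tau^2/6}\, z^{\beta/3}\, \bigl[\operatorname{diag}(z^{1/3},1,z^{-1/3})\bigr]_{ii}\, M^{+}_{ij}\, e^{\Theta(z;\tau)_{jj}}\bigl(1+\mathcal{O}(z^{-1/3})\bigr), \]
where $M^+$ is a constant $3\times 3$ matrix assembled out of the branch values $(-\omega^k)^{-\beta}$ of $t_k^{-\beta-3}$ (computed with $0<\arg t<2\pi$) and of the $\omega$-factors from the Gaussian prefactor and the derivatives $t_k^m$. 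A direct check identifies $M^+$ as $\Omega_+ B_+$: the phases $e^{\pm \beta\pi i/3}$ collect into the diagonal $B_+$ while the residual $\omega$-structure reassembles into $\Omega_+$. The analogous computation in the lower half plane yields $\Omega_- B_-$. The main obstacle is the Stokes bookkeeping of the second paragraph: verifying that the specific linear combinations in \eqref{definitionpsi1}--\eqref{definitionpsi2} are precisely those required to suppress every newly-dominant saddle as $z$ crosses each Stokes ray. Once this is settled, the error bound $\mathcal{O}(z^{-1/3})$ and the uniformity in $\tau$ on bounded sets are inherited from the standard classical steepest descent estimates applied to each individual $q_j^{(m)}$.
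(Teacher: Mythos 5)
Your proposal follows essentially the same route as the paper's proof: classical steepest descent on the contour integrals \eqref{integralrepresentationsq}, with the identical saddle-point expansion \eqref{tk}, critical-value expansion \eqref{thetatk}, second-derivative expansion \eqref{thetadoubletk}, and the observation that the second and third rows come from replacing $\beta$ by $\beta+1$ and $\beta+2$, followed by identifying steepest-descent paths and tracking the Stokes switching across the rays $\arg z \in \{0,\pm\pi/4,\pm 3\pi/4,\pi\}$ to match the piecewise definition of $\Psi$. Your presentation is in fact somewhat more explicit than the paper's (the saddle identity simplification and the sector-by-sector cancellation narrative), but the underlying argument and the level of detail left to ``a direct inspection'' are the same.
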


\begin{proof}
This follows from a classical steepest descent analysis applied to the contour integral representations
for the $q_{j}$ \eqref{integralrepresentationsq}.  Define the phase function $\theta(t;z,\tau)$ by
\begin{equation} \theta(t;z,\tau) := \frac{\tau}{t} - \frac{1}{2t^{2}} + zt. \end{equation}

The main contributions in the integrals occurs around the saddles $t = t_{k} = t_{k}(z;\tau)$ of $\theta$, which are the solutions to $\theta'(t) = 0$:
\begin{equation} t_{k}(z;\tau) = - \omega^{k} z^{-\frac{1}{3}} - \frac{\tau}{3} \omega^{2k} z^{-\frac{2}{3}} + \mathcal{O}\left(z^{-\frac{4}{3}}\right), \quad k = 1,2,3 .\label{tk}\end{equation}
The $\mathcal{O}$-term here is uniform for $\tau$ in compacta.  The critical values are given by
\begin{equation} \theta(t_{k}) = \theta(t_{k}; z,\tau) = -\frac{3}{2} \omega^{k} z^{\frac{2}{3}} - \tau \omega^{2k} z^{\frac{1}{3}} + \frac{\tau^{2}}{6} + \mathcal{O}\left(z^{-\frac{1}{3}}\right) .\label{thetatk}\end{equation}

We also need the second derivative of $\theta$ in the saddle points:
\begin{equation} \theta''(t_{k}) = -3 \omega^{2k} z^{\frac{4}{3}} + \mathcal{O}(z) .\label{thetadoubletk}\end{equation}

Through each saddle point $t_{k}$ there is a steepest descent path $\Upsilon_{k}$.  This is a path such that $\im \theta(t) = \im \theta(t_{k})$ for all $t \in \Upsilon_{k}$.  Let $\alpha_{k}, |\alpha_{k}| = 1$ be the tangent direction of $\Upsilon_{k}$ in $t_{k}$.  The steepest descent method then yields
\begin{equation} \int_{\Upsilon_{k}} t^{-\beta -3} e^{\theta(t)} \, dt = \alpha_{k} \sqrt{ \frac{ 2 \pi } {-\theta''(t_{k}(z))\alpha_{k}^{2}} } t_{k}(z)^{-\beta -3} e^{\theta(t_{k}(z))} \left( 1 + \mathcal{O}(z^{-1/3})\right) .\end{equation}

The fact that $\Upsilon_{k}$ is a steepest descent path guarantees that $\theta''(t_{k}(z))\alpha_{k}^{2}$ is negative.  Substituting \eqref{tk}, \eqref{thetatk} and \eqref{thetadoubletk} we find
\begin{multline*}
    \int_{\Upsilon_{k}} t^{-\beta -1} e^{\theta(t)} \, dt \\
    = \pm \sqrt{\frac{2\pi}{3}} e^{\frac{\tau^{2}}{6}} \omega^{2k} z^{-\frac{2}{3}} (-\omega^{k} z^{-\frac{1}{3}})^{-\beta -3} e^{-\frac{3}{2} \omega^{k} z^{2/3} - \tau \omega^{2k} z^{1/3} } \left(1 + \mathcal{O}\left(z^{-\frac{1}{3}}\right)\right).
    \end{multline*}

The final step in the proof is the identification of the steepest descent paths, and the deformation of the $\Gamma_{k}$ into steepest descent paths.  This gives us expressions for the $q_{k}$ in each sector.  For the second and third row of $\Psi$ we remark that by \eqref{integralrepresentationsq} differentiation of the $q_{k}$ is equivalent to increasing $\beta$ by $1$.

The final expansion for $\Psi$ then turns out to be exactly as in \eqref{asymptoticexpansionpsi}.
\end{proof}

For the further analysis we also need to know the next order term in the expansion \eqref{asymptoticexpansionpsi}.

\begin{lemma} \label{lem:Psi1}
We have as $z \to \infty$ with $\pm \im z > 0$
\begin{multline}
    \Psi(z) =  \sqrt{ \frac{ 2\pi}{3}} e^{\frac{\tau^{2}}{6}} z^{\frac{\beta}{3}}
    \begin{pmatrix} z^{\frac{1}{3}} & 0 & 0 \\ 0 & 1 & 0 \\ 0 & 0 & z^{-\frac{1}{3}}  \end{pmatrix}
    \Omega_{\pm} \\
    \times \left(I + (\Psi_1)_{\pm} z^{-\frac{1}{3}} + \mathcal{O}\left(z^{-\frac{2}{3}}\right)\right)
    B_{\pm} e^{\Theta(z)}, \label{refinedexpansion}
\end{multline} where the constant matrices
$\left(\Psi_{1}\right)_{\pm}$ are given by
\begin{align}
    \left(\Psi_{1} \right)_{+} & = - \frac{\tau}{3}\left(\frac{\tau^{2}}{9} + \beta +1 \right)
    \begin{pmatrix} \omega & 0 & 0 \\ 0 & 1 & 0 \\ 0 & 0 & \omega^{2} \end{pmatrix}
    - \frac{\tau}{9} \begin{pmatrix} 0 & \omega^{2} - \omega & 1- \omega \\ \omega^{2} - 1 & 0 & 1- \omega \\ 1- \omega^{2} & \omega^{2} - \omega & 0               \end{pmatrix} \\
    \left(\Psi_{1}\right)_- & = -\frac{\tau}{3} \left( \frac{\tau^{2}}{9} + \beta +1\right)
    \begin{pmatrix} \omega^{2} & 0 & 0 \\ 0 & 1 & 0 \\ 0 & 0 & \omega \end{pmatrix}
    - \frac{\tau}{9} \begin{pmatrix} 0 & \omega^{2} - \omega & \omega^{2} - 1 \\ 1- \omega & 0 & 1- \omega^{2} \\ \omega- 1& \omega- \omega^{2} & 0             \end{pmatrix}
\end{align}
\end{lemma}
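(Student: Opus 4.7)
The plan is to refine the steepest descent argument used in the proof of the preceding proposition by carrying each $q_j(z)$ to one more order in $z^{-1/3}$. The leading term of this expansion has already been identified with the factor $\Omega_\pm B_\pm e^{\Theta(z)}$; what remains is to compute the next order correction, which becomes the matrix $(\Psi_1)_\pm$. Since the derivatives of $q_j$ that appear in the second and third rows of $\Psi$ amount to shifting $\beta \mapsto \beta-1$ in the integrand, it is enough to obtain a single refined expansion of the scalar saddle point integral and then assemble.

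First I would sharpen the saddle point locations. The equation $\theta'(t) = z - \tau/t^2 + 1/t^3 = 0$, i.e.\ $zt^3 - \tau t + 1 = 0$, is solved by the ansatz
\[
t_k(z;\tau) = -\omega^k z^{-1/3} - \tfrac{\tau}{3}\omega^{2k} z^{-2/3} + c_k z^{-1} + \mathcal O(z^{-4/3}),
\]
and matching orders determines $c_k$ explicitly in terms of $\tau$ and $\omega^k$. Substituting back, I would compute the refined values of $\theta(t_k)$, $\theta''(t_k)$, $\theta'''(t_k)$, $\theta^{(4)}(t_k)$, and of $t_k^{-\beta-3}$ to enough accuracy that each carries a controlled $z^{-1/3}$ correction beyond what appears in \eqref{thetatk}--\eqref{thetadoubletk}. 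Then I would apply the standard higher order Laplace expansion through a simple saddle,
\[
\int_{\Upsilon_k} g(t) e^{\theta(t)} dt = \alpha_k \sqrt{\frac{2\pi}{-\theta''(t_k)\alpha_k^2}}\, g(t_k) e^{\theta(t_k)} \left(1 + \Xi_k(z) + \mathcal O(z^{-2/3})\right),
\]
where $\Xi_k(z)$ is the classical correction built from $g(t) = t^{-\beta-3}$ and the derivatives of $\theta$ at $t_k$. The $z^{-1/3}$ correction in the final expansion receives three contributions: the expansion of the phase shift $e^{\theta(t_k)}$ beyond \eqref{thetatk}, the expansion of the Gaussian prefactor and of $t_k^{-\beta-3}$, and $\Xi_k(z)$ itself. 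I would collect them into a single factor $1 + A_k z^{-1/3} + \cdots$ multiplying the leading order, with $A_k$ depending linearly on $\omega^k$ and quadratically on $\omega^{2k}$.

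Once the scalar expansion is in hand for each $q_j$ and its derivatives, I would reassemble the matrix $\Psi$ using the sector definitions \eqref{definitionpsi1}--\eqref{definitionpsi2} and the identification of the $\Gamma_j$ with unions of steepest descent paths $\Upsilon_k$ that was carried out in the proof of the proposition. Exactly the same identification produces the leading factor $\Omega_\pm B_\pm e^{\Theta(z)}$, and the subleading terms produce a right multiplier of the form $I + (\Psi_1)_\pm z^{-1/3} + \mathcal O(z^{-2/3})$ because the $e^{\Theta(z)}$ factor is diagonal and commutes with scalar corrections on each column. The $\tau$-independent terms coming from $\beta$ cancel between the corrections to $\theta(t_k)$ and the corrections to $t_k^{-\beta-3}$, leaving an overall prefactor proportional to $\tau$ as in the stated formulas. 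The diagonal piece of $(\Psi_1)_\pm$, with $\omega^{\pm 1}$ entries and prefactor $-\tfrac{\tau}{3}(\tau^2/9 + \beta + 1)$, comes from the $\omega^k$ correction of $e^{\theta(t_k)}$ evaluated on each column, while the off-diagonal piece $-\tfrac{\tau}{9}$ times the matrix of differences of powers of $\omega$ arises only after conjugating by $\Omega_\pm^{-1}$ on the right, which mixes the columns.

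The main obstacle is bookkeeping. Three independent sources contribute at order $z^{-1/3}$, and each carries a different power of $\omega^k$, so the wrong grouping is easy and the off-diagonal structure is only visible after the final conjugation by $\Omega_\pm$; losing track of a single $\omega$ factor will yield a formula that is superficially similar but wrong. For the lower half plane one also has to re-examine the identification of saddles with contours, since the roles of $\Gamma_1$ and $\Gamma_2$ swap compared to the upper half plane, which accounts for the transposition of the off-diagonal blocks between $(\Psi_1)_+$ and $(\Psi_1)_-$. Once these are tracked carefully the formulas drop out.
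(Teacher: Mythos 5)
Your proposal takes a genuinely different route from the paper. The paper does not refine the steepest-descent calculation at all: it observes that the entries of the first row of $\Psi$ solve the differential equation \eqref{diffeqq}, that the second and third rows solve the same equation with $\beta$ shifted (since differentiating $q$ shifts the exponent in the integrand of \eqref{integralrepresentationsq}), and that, once the leading term of the formal power-series solution at the irregular singular point $\infty$ is fixed by the previous proposition, substituting the ansatz \eqref{refinedexpansion} into the ODE and matching the coefficient of $z^{-1/3}$ determines $(\Psi_1)_\pm$ by a linear recurrence. This is the standard way to generate subleading coefficients of asymptotic expansions once the dominant behaviour is known, and it sidesteps precisely the bookkeeping your closing paragraph worries about. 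Your higher-order Laplace approach would also reach the answer, and in fact re-derives the uniqueness that the ODE encodes, but at substantially greater effort.

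A few details in your plan are off, none of them fatal. Expanding $z t^3 - \tau t + 1 = 0$ shows that your $c_k$ vanishes, so the first nontrivial correction to $t_k$ beyond $-\tfrac{\tau}{3}\omega^{2k} z^{-2/3}$ is $\mathcal O(z^{-4/3})$. The classical Laplace correction $\Xi_k$ is $\mathcal O(z^{-2/3})$, not $\mathcal O(z^{-1/3})$ (the effective large parameter is $z^{2/3}$), so it contributes nothing at the order you need; the entire $z^{-1/3}$ term comes from the expansions of $e^{\theta(t_k)}$, the Gaussian prefactor, $t_k^{-\beta-3}$, and the $\mathcal O(z^{-1/3})$ shift in the direction of the column vector $(1, t_k, t_k^2)^\transpose$. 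Each of these corrections is automatically proportional to $\tau$, because at $\tau = 0$ the saddle is exactly $-\omega^k z^{-1/3}$ and all of them drop out; there is no cancellation of $\tau$-independent $\beta$-terms to track. Finally, the off-diagonal structure arises from expanding the column-direction shift in the basis formed by the columns of $\Omega_\pm$, which is left-multiplication by $\Omega_\pm^{-1}$, not conjugation. These slips are exactly the kind that produce a wrong $\omega$-power without warning, which is one more reason the paper's ODE-matching route is preferable here.
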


\begin{proof}
The first row of $\Psi_{1}$ can be found in a straightforward way by expressing that
the entries in the first row of $\Psi(z)$ solve the differential equation \eqref{diffeqq}.
For the second and third rows we  increase $\beta$ in \eqref{diffeqq} by $1$ and $2$, respectively.
\end{proof}

\section{Second tool: modified equilibrium problem and Riemann surface}
\label{sec:modeq}

\subsection{Modified equilibrium problem}
One of the transformations in the Deift/Zhou steepest descent analysis of the RH problem for
(multiple) orthogonal polynomials is typically based on the
limiting zero distribution of the associated polynomials. In our situation these are
given by the vector equilibrium problem from Definition \ref{vectorequilibrium}.
As explained in Subsection \ref{subsectionphasetransition}, the endpoint $b = b_{a}$
of one of the supports is varying with $a$ and tends to $0$ as $a \to -1$.  Working with
measures with varying supports around $0$ would cause major technical problems.
Therefore, following \cite{blku07, clku06}, we use a modified equilibrium problem, where
the positivity of the measures is not required. So we will be dealing with
signed measures.

\begin{definition}
The modified equilibrium problem asks for two signed measures $\mu_{1}$ and $\mu_{2}$ minimizing
the energy \eqref{energy} among all signed measures with $\supp(\mu_1) \subset [a,0]$, $\supp(\mu_2) \subset [0,1]$
and  $\int d\mu_{1} = \frac{1}{2}$, $\int d\mu_{2} = \frac{1}{2}$.
\end{definition}

The modified equilibrium measures $\mu_{1}$ and $\mu_{2}$ are unique, and have full supports $[a,0]$ and $[0,1]$,
respectively. In fact, the modification of the equilibrium problem comes down to forcing the equilibrium measures to
have full supports, at the expense of losing positivity of one of the measures near $0$.

In the symmetric case $a =-1$ the modified equilibrium measures coincide with the usual equilibrium measures.
In this case the equilibrium densities $\psi_{1}$ and $\psi_{2}$ are positive on the
whole supports, and around $0$ they blow up like an inverse cube root.

In the general case $a \neq -1$ the equilibrium density on the smaller interval is  positive on the full interval.
The density on the larger interval becomes negative in an interval between $0$ and $x_0$ where $x_0$ depends on $a$
in such a way that
\begin{equation}
x_{0} = x_{0}(a) = \frac{ (a+1)^{3}}{108} + \mathcal{O}(a+1)^{4} \quad \textrm{ as } a \to -1,
\end{equation}
see \eqref{x0expansion} below.
For $a \neq -1$ both densities $\psi_{1}(x)$ and $\psi_{2}(x)$ behave like $x^{-2/3}$ as $x \to 0$.
A sketch of the densities is given in Figure~\ref{figuremodified}.

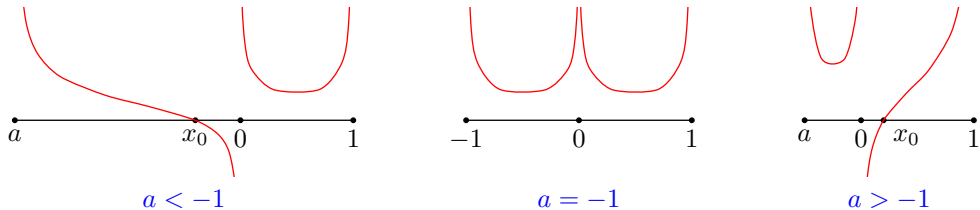
\begin{figure}[t]
\centering
\begin{tikzpicture}[scale = 1.5,  line width = .5]
\useasboundingbox (-5,-1) rectangle (4.5,1);
\draw (2.75,-0.7) node[text = blue] {$a > -1$};
\draw (0,-0.7) node[text = blue] {$a = -1$};
\draw (-3.5,-0.7) node[text = blue] {$a < -1$};
\clip (-7,-.5) rectangle (5.5,1);
\draw [color = black] (-1,0) -- (1,0);
\filldraw (-1,0) circle (.02) node[below] {$-1$};
\filldraw (0,0) circle (.02) node[below] {$0$};
\filldraw (1,0) circle (.02)  node[below] {$1$};
\draw[color = red, line width = .5] plot[smooth] coordinates{
(-.99, 1.933780056)
(-.96, 0.886107499)
(-.90, 0.5128480254)
(-.70, 0.2731187430)
(-.30, 0.2697788644)
(-.10, 0.4495865000)
(-.04, 0.6779079288)
(-.01, 1.192546280)
(-.001, 2.8)};
\draw[color = red, line width = .5] plot[smooth] coordinates{
(.99, 1.933780056)
(.96, 0.886107499)
(.90, 0.5128480254)
(.70, 0.2731187430)
(.30, 0.2697788644)
(.10, 0.4495865000)
(.04, 0.6779079288)
(.01, 1.192546280)
(.001, 2.8)};
\draw[black] (-5,0) -- (-2,0);
\filldraw (-5,0) circle (.02) node[below] {$a$};
\filldraw (-3,0) circle (.02) node[below] {$0$};
\filldraw (-2,0) circle (.02)  node[below] {$1$};
\filldraw (-3.4,0) circle (.02) node[below]{$x_{0}$};
\draw[color = red, line width = .5] plot[smooth] coordinates{
(-2.01, 1.933780056)
(-2.04, 0.886107499)
(-2.10, 0.5128480254)
(-2.30, 0.2731187430)
(-2.70, 0.2697788644)
(-2.90, 0.4495865000)
(-2.96, 0.6779079288)
(-2.99, 1.192546280)
(-2.999, 2.8)
};
\draw[color = red, line width = .5] plot[smooth] coordinates{
(-4.98, 1.98825)
(-4.96,1.3975)
(-4.90,0.86765)
(-4.80, 0.59365)
(-4.60,0.38920)
(-4.20,0.22264)
(-3.80, 0.1189)
(-3.40, 0)
(-3.20,-0.122315)
(-3.10,-0.28348)
(-3.04, -0.61694)
(-3.02, -1.0285)
};
\draw[black] (2,0) -- (3.5,0);
\filldraw (2,0) circle (.02) node[below] {$a$};
\filldraw (2.5,0) circle (.02) node[below] {$0$};
\filldraw (3.5,0) circle (.02)  node[below] {$1$};
\filldraw (2.7,0) circle (.02) node[below right] {$x_{0}$};
\draw (2.75,-1) node[text = blue] {$a > -1$};
\draw[color = red, line width = .5] plot[smooth] coordinates{
(2.005, 3.867560112)
(2.02,1.772214998)
(2.05,1.025696051)
(2.15,0.5462374860)
(2.35,0.5395577288)
(2.45, 0.8991730000)
(2.48,1.355815858)
(2.495,2.385092560)};
\draw[color = red, line width = .5] plot[smooth] coordinates{
(3.49,3.9765)
(3.48,2.795)
(3.45,1.7153)
(3.40,1.1873)
(3.30,0.7784)
(3.10,0.44528)
(2.90,0.2378)
(2.70,0)
(2.60,-0.24463)
(2.55,-0.56696)
(2.52,-1.23388)
(2.51,-2.057)
};
\end{tikzpicture}
\caption{A sketch of the modified equilibrium densities in the three cases $a < -1$, $a=1$ and $a> -1$.
The modified equilibrium density becomes negative near $0$ on the larger of the two intervals.}
\label{figuremodified}
\end{figure}

Define the logarithmic potentials $U^{\mu}$ of a (signed) measure $\mu$ by
\begin{equation}
    U^{\mu}(x) = \int \log \frac{1}{|x-y|} \, d\mu(y), \quad x \in \mathbb{C} .\label{defpotentials}
    \end{equation}
The Euler-Lagrange variational conditions \cite{dei99,sato97} for $\mu_{1}$ and $\mu_{2}$
then say that there exist constants $l_{1}, l_{2} \in \mathbb{R}$ such that
\begin{equation}
    \begin{aligned}
        2U^{\mu_{1}}(x) + U^{\mu_{2}}(x) & = l_{1},  \textrm{ for } x \in [a,0], \\
    U^{\mu_{1}}(x) + 2 U^{\mu_{2}}(x) & = l_{2}, \textrm{ for } x \in [0,1],
    \end{aligned}
    \label{variationalconditions}
    \end{equation}
and these conditions characterize the modified equilibrium measures.
For the non-modified equilibrium measures we would have an inequality instead of
equality for $x$ in the gap of the supports.  The fact that for $\mu_{1}$ and $\mu_{2}$
the Euler-Lagrange variational conditions have such a simple form on the full intervals $[a,0]$
and $[0,1]$ will be important for the further analysis.

\subsection{Riemann surface}

The modified equilibrium problem is easiest to analyze by means
of an appropriate three-sheeted Riemann surface $\mathcal{R}$.
Define $\mathcal{R}$ by taking three copies of the Riemann sphere $\overline{\mathbb{C}}$ with cuts
\begin{equation}
    \mathcal{R}_{0} := \overline{\mathbb{C}} \setminus  [a,1],
    \quad \mathcal{R}_{1} :=  \overline{\mathbb{C}} \setminus [a,0],
    \quad \mathcal{R}_{2} :=  \overline{\mathbb{C}} \setminus [0,1] ,
    \end{equation}
and gluing them together along these cuts in the usual crosswise manner, see Figure \ref{riemannsurface}.

\begin{figure}[t]
\begin{center}
\begin{tikzpicture}[scale = 2,line width = .5]
\draw [very thick] (-1.5,1) -- (1,1);
\filldraw (-1.5,1) circle (.02);
\filldraw (0,1) circle (.02);
\filldraw (1,1) circle (.02);
\draw (-1.5,1) node[above]{$a$};
\draw (0,1) node[above]{$0$};
\draw (1,1) node[above]{$1$};
\filldraw (-1.5,0) circle (.02);
\filldraw (0, -1) circle (.02);
\filldraw (0,0) circle (.02);
\filldraw (1,-1) circle (.02);
\draw [very thick](-1.5,0) -- (0,0);
\draw [very thick](0,-1) -- (1,-1);
\draw (-1.5,1.3) -- (1.7,1.3);
\draw (-2,.7) -- (1.2,.7);
\draw (1.7,1.3) -- node[below right]{ $\mathcal{R}_{0}$}(1.2,.7);
\draw (-1.5,1.3) -- (-2,.7);
\draw (-1.5,0.3) -- (1.7,0.3);
\draw (-2,-.3) -- (1.2,-.3);
\draw (1.7,.3)-- node[below right]{ $\mathcal{R}_{1}$}(1.2,-.3);
\draw (-1.5,.3) -- (-2,-.3);
\draw (-1.5,-.7) -- (1.7,-.7);
\draw (-2,-1.3) -- (1.2,-1.3);
\draw (1.7,-.7)-- node[below right]{ $\mathcal{R}_{2}$}(1.2,-1.3);
\draw (-1.5,-.7) -- (-2,-1.3);
\draw [dashed] (-1.5,1) -- (-1.5,0);
\draw [dashed] (0,1) -- (0,-1);
\draw [dashed] (1,1) -- (1,-1);
\end{tikzpicture}
\caption{The sheets of the Riemann surface $\mathcal{R}$}
\label{riemannsurface}
\end{center}
\end{figure}
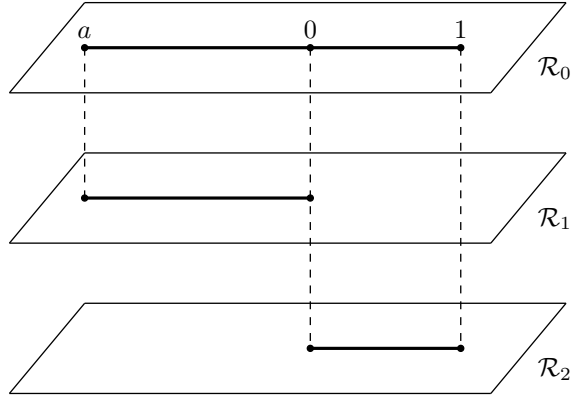

The Riemann surface has genus zero and can be defined by the polynomial equation
\begin{equation}
        4 a \xi^3 - 2 (a+1) z \xi^3 - 3 (a-1) z \xi^2 +  (a-1) z = 0.
    \label{polequation} \end{equation}
Solving for $z$, we find a rational function
\begin{equation}
     z = z(\xi) = \frac{4 a \xi^3}{2(a+1) \xi^3 + 3 (a-1) \xi^2 - (a-1)},
    \label{definitionf}
    \end{equation}
which defines a conformal map from $\xi \in \overline{\mathbb C}$ to $z \in \mathcal R$,
so that the branch points $a$, $0$, and $1$ of $\mathcal R$ correspond to $\xi=-1, 0, 1$,
respectively.
The restriction of the inverse mapping of \eqref{definitionf} to the sheet $\mathcal{R}_{i}$ is denoted by $\xi_{i}$
\[ \xi_i : \mathcal R_i \to \mathbb C. \]

The $\xi$-functions map the sheets of $\mathcal{R}$ to certain domains
\[ \widetilde{\mathcal R}_i := \xi_i(\mathcal R_i) \]
 of $\overline{\mathbb{C}}$.
 Then $\widetilde{\mathcal{R}}_{1}$ and $\widetilde{\mathcal{R}}_{2}$ are bounded,
 while $\widetilde{\mathcal{R}}_{0}$ is unbounded.  We use $\gamma_{1}^{\pm}$ and $\gamma_{2}^{\pm}$
 to denote the arcs bounding $\widetilde{\mathcal{R}}_{1}$ and $\widetilde{\mathcal{R}}_{2}$
 with clockwise orientation as in Figure~\ref{imagesxi} below.

\subsection{Properties of the modified equilibrium problem}

We can make use of the Riemann surface to prove the following
properties of the modified equilibrium measures. We will not
give all details in the following calculations.

The modified equilibrium measures are described in terms of the algebraic equation
\begin{equation} \label{zetaequation}
    \zeta^3  -\frac{3z-2z^*(a) - 1 - a}{4z(z-a)(z-1)} \zeta - \frac{z-z^*(a)}{4z^2(z-a)(z-1)} = 0
    \end{equation}
where $z^* = z^*(a)$ is a certain solution of
\begin{equation} \label{zstarequation}
    64 (z^*)^3 - 48(a+1) (z^*)^2 - (15a^2-78a+15) z^* - (a+1)^3 = 0.
    \end{equation}
It can be shown that \eqref{zstarequation} has three distinct real
solutions if $a < 0$. We use $z^*(a)$ to denote the middle one of
the three solutions and this is the value that is used in \eqref{zetaequation}.

\begin{proposition} \label{prop:psi12}
For $a< 0$, the following hold.
\begin{enumerate}
\item[\rm (a)] The three solutions of \eqref{zetaequation} are given by
\begin{equation}  \label{zeta012}
    \begin{aligned}
    \zeta_0(z) & = \int_a^0 \frac{\psi_1(x)}{z-x} d x + \int_0^1 \frac{\psi_2(x)}{z-x} dx, \\
    \zeta_1(z) & = - \int_a^0 \frac{\psi_1(x)}{z-x} dx, \\
    \zeta_2(z) & = - \int_0^1 \frac{\psi_2(x)}{z-x} dx,
    \end{aligned}
    \end{equation}
where $\psi_1$ and $\psi_2$ are the densities of the modified equilibrium measures.
\item[\rm (b)] The densities satisfy
\begin{equation} \label{psi12}
    \begin{aligned}
    \psi_1(x) & = \frac{1}{2\pi i} \left(\zeta_{1,+}(x) - \zeta_{1,-}(x)\right), \quad x \in (a,0), \\
    \psi_2(x) & = \frac{1}{2\pi i} \left(\zeta_{2,+}(x) - \zeta_{2,-}(x)\right), \quad x \in (0,1).
    \end{aligned}
    \end{equation}
\item[\rm (c)] There is $x_0 = x_0(a) \in (a,1)$ with the same sign as $a+1$ such that
\begin{itemize}
\item if $-1 < a < 0$, then $\psi_1(x) > 0$ for $x \in (a,0)$ and $\psi_2(x) < 0$ if and only if $0 < x <  x_0$,
\item if $a < -1$, then $\psi_2(x) > 0$ for $x \in (0,1)$ and $\psi_1(x) < 0$ if and only if $x_0 < x < 0$.
\end{itemize}
See also Figure \ref{figuremodified}.
\item[\rm (d)] We have
\begin{equation}
    \label{x0expansion}
    x_0(a) = \frac{(a+1)^3}{108} + \mathcal O\left(a+1\right)^{4} \qquad \text{as } a \to -1.
    \end{equation}
\end{enumerate}
\end{proposition}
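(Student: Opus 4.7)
The plan is to identify $\zeta_0,\zeta_1,\zeta_2$ as the three branches of a single rational function on the genus-zero Riemann surface $\mathcal R$, exploiting the fact that for the \emph{modified} equilibrium problem the variational conditions \eqref{variationalconditions} hold with equality on the full intervals.

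For part (a), introduce the Cauchy transforms $F_j(z) = \int \psi_j(y)(z-y)^{-1}\,dy$, so that $\zeta_0 = F_1 + F_2$, $\zeta_1 = -F_1$, $\zeta_2 = -F_2$ and automatically $\zeta_0 + \zeta_1 + \zeta_2 \equiv 0$, matching the absent $\zeta^2$-term in \eqref{zetaequation}. Differentiating \eqref{variationalconditions} along the real axis via $\partial_x U^{\mu_j}(x) = -\tfrac{1}{2}(F_{j,+}(x) + F_{j,-}(x))$, and using the analyticity of $F_2$ across $(a,0)$ and of $F_1$ across $(0,1)$, one obtains $F_{1,+} + F_{1,-} + F_2 = 0$ on $(a,0)$ and $F_1 + F_{2,+} + F_{2,-} = 0$ on $(0,1)$. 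These rewrite as the sheet-exchange identities $\zeta_{0,\pm} = \zeta_{1,\mp}$ on $(a,0)$ and $\zeta_{0,\pm} = \zeta_{2,\mp}$ on $(0,1)$, identifying $(\zeta_0,\zeta_1,\zeta_2)$ as the three branches of an algebraic function on $\mathcal R$. Because $\mathcal R$ has genus zero with uniformization \eqref{definitionf}, this pulls back to a rational function $\zeta(\xi)$, determined uniquely by its prescribed residues at the three preimages of $z=\infty$ (namely $1/z$ on sheet $0$ and $-1/(2z)$ on sheets $1$ and $2$). Computing the elementary symmetric functions of the three branches as rational functions of $z$ through $\xi$ produces an identity of the form \eqref{zetaequation}, in which $z^*(a)$ appears as a specific coefficient; direct algebraic simplification then shows $z^*(a)$ satisfies \eqref{zstarequation}, and continuity in $a$ together with the value $z^*(-1)=0$ (the symmetric case) singles out the middle real root.

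Part (b) follows immediately from Sokhotski-Plemelj applied to $\zeta_j=-F_j$. For part (c), since $\zeta_j$ is analytic on $\mathbb C \setminus \supp(\mu_j)$ and real off the support, Schwarz reflection gives $\psi_j(x)=\tfrac{1}{\pi}\im\zeta_{j,+}(x)$, so sign changes of $\psi_j$ occur precisely where two branches of \eqref{zetaequation} coincide on $\supp(\mu_j)$. The numerator of the discriminant is
\[
R(z) = z(3z - 2z^* - 1 - a)^3 - 27(z-z^*)^2(z-a)(z-1),
\]
which at $a=-1$ (where $z^*=0$) reduces to $27z^2$; hence for $a$ near $-1$ the additional zero of $R$ inside $(a,1)\setminus\{0\}$ is simple and gives the unique sign-change point $x_0(a)$, whose sign agrees with that of $a+1$ by continuity. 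Calibration against the symmetric case, where both densities are non-negative with cube-root behavior at $0$, yields the stated sign distributions.

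For part (d), write $a = -1 + \varepsilon$ and expand perturbatively. From \eqref{zstarequation} one obtains $z^*(a) = -\varepsilon^3/108 + O(\varepsilon^4)$ by implicit expansion around the middle root. Writing $x_0 = s\varepsilon^3 + O(\varepsilon^4)$ and substituting into $R(x_0)=0$, the leading-order $O(\varepsilon^6)$ equation becomes $s = 27(s + 1/108)^2$, whose double root is $s = 1/108$, giving \eqref{x0expansion}. The main obstacle is the algebraic identification in (a)---verifying that the parameter $z^*(a)$ emerging from the rational function $\zeta(\xi)$ satisfies the cubic \eqref{zstarequation}---after which (b)--(d) reduce to Sokhotski-Plemelj, a discriminant computation, and perturbative expansion.
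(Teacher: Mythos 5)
Your part (a) and (b) take essentially the same route as the paper: glue $\zeta_0,\zeta_1,\zeta_2$ into a single meromorphic function on $\mathcal R$ via the variational conditions, then recover the cubic \eqref{zetaequation}. The paper does this by divisor-counting the symmetric functions $\zeta_0\zeta_1\zeta_2$ and $\sum_{i<j}\zeta_i\zeta_j$ directly in the $z$-plane; your idea of computing through the $\xi$-uniformization is a legitimate variant. (Two small inaccuracies there: $\zeta$ has \emph{zeros}, not poles, at the $p_j=\xi_j(\infty)$, so ``prescribed residues'' should be ``prescribed leading coefficients''; and pinning down the rational function also requires specifying the pole divisor at $\xi=-1,0,1$, not just the data at $\infty$.)

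However, there is a genuine gap in part (c) that also infects part (a). You assert that the discriminant numerator $R(z)$ has a \emph{simple} zero at $x_0(a)$. It cannot: a simple zero of the discriminant of the cubic at a point $\neq a,0,1$ would create an additional branch point of $\mathcal R$, which has only $a,0,1$ as finite branch points. Moreover, on $(0,1)$ the three roots consist of one real root $\zeta_1$ and the complex-conjugate pair $\zeta_{2,+},\zeta_{0,+}=\overline{\zeta_{2,+}}$, so $\Delta(x)=-4\bigl(\operatorname{Im}\zeta_{2,+}(x)\bigr)^2\,|\zeta_1-\zeta_{2,+}|^4\le 0$; a simple (sign-changing) zero of $\Delta$ is therefore impossible, and a zero of $\Delta$ at which $\psi_2=\tfrac1\pi\operatorname{Im}\zeta_{2,+}$ changes sign must be of order two. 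The paper in fact \emph{derives} \eqref{zstarequation} precisely from the requirement that $Q_2(z)$ (your $R$, which collapses to a quadratic after the $z^4$ and $z^3$ terms cancel) have a double zero. By attributing \eqref{zstarequation} to unspecified ``direct algebraic simplification'' in (a) and then claiming a simple zero in (c), you have skipped the key mechanism and introduced a contradiction. (Incidentally, your leading-order equation $s=27(s+1/108)^2$ in part (d) has a \emph{double} root at $s=1/108$, which is another reflection of $R$ having a double zero.) Part (b) is fine, and your perturbative computation in (d) is numerically correct once the double-zero structure is in place.
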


\begin{proof}

Let $\zeta_j$, $j=0,1,2$ be defined by \eqref{zeta012}, so that we clearly
have
\begin{equation} \label{elemzeta1}
    \zeta_0(z) + \zeta_1(z) + \zeta_2(z) = 0
    \end{equation}
It follows from the variational conditions \eqref{variationalconditions} that
$\zeta_{0,+} = \zeta_{1,-}$ on $(a,0)$ and $\zeta_{0,-} = \zeta_{2,-}$ on $(0,1)$.

Thus if we consider $\zeta_j$ as a function defined on the sheet $\mathcal R_j$
for $j=0,1,2$, then this function extends to a meromorphic function on $\mathcal R$.
Since (due to  the normalization $\int d\mu_1 = \int d\mu_2 = 1/2$)
\begin{equation}
  \zeta_0(z) = z^{-1} + \mathcal O(z^{-2}), \qquad
    \zeta_j(z) = - \frac{1}{2} z^{-1} + \mathcal O(z^{-2}),
    \end{equation}
as $z \to \infty$, the meromorphic function has simple zeros at the three
points at infinity. There are simple poles at $a$ and $1$ and a possible double pole at $0$.
In addition there is a fourth simple zero at a point $z^*$.

Then the product $\zeta_0 \zeta_1 \zeta_2$ is a rational function in the complex
plane with a zero at $z^*$, simple poles at $a$, $-1$, a double pole at $0$, and it
behaves as $\frac{1}{4} z^{-3}$ as $z \to \infty$. This means that
\begin{equation} \label{elemzeta3}
    \zeta_0(z) \zeta_1(z) \zeta_2(z) = \frac{z-z^*}{4z^2(z-a)(z-1)}.
    \end{equation}
Similar considerations show that
\begin{equation} \label{elemzeta2}
    \zeta_0(z) \zeta_1(z) + \zeta_0(z) \zeta_2(z) + \zeta_1(z) \zeta_2(z) = -\frac{3z-q}{4z(z-a)(z-1)}
    \end{equation}
for some $q$.   Thus $\zeta_j$, $j=1,2,3$ are the three solutions of the algebraic
equation
\begin{equation} \label{algebraicequation}
    \zeta^3  -\frac{3z-q}{4z(z-a)(z-1)} \zeta - \frac{z-z^*}{4z^2(z-a)(z-1)} = 0
    \end{equation}
Inserting $\zeta = \zeta_1(z) = -1/(2z) + \mathcal O(z^{-2})$ into \eqref{algebraicequation}
shows that $q = 2z^* + a + 1$, which gives us the equation \eqref{zetaequation}.

The discriminant of \eqref{zetaequation} with respect to $\zeta$  has the form
\[ \frac{Q_{2}(z)}{16z^{4}(z-a)^{3}(z-1)^{3}} \]
where $Q_2(z)$ is a certain quadratic polynomial in $z$ that we calculated with Maple.
The poles $a$, $0$ and $1$ of the discriminant correspond to the branch points of the Riemann surface.
The quadratic polynomial should have a double zero, since otherwise there would be more branch points.
This leads to a condition on $z^*$, which turns out to be given by \eqref{zstarequation}.
Again we made these calculations with Maple.  This proves part (a) of the proposition.

The relevant solution $z^*$ of \eqref{zstarequation} is the one that is $0$ for $a=-1$. This solution is then
well-defined as a real analytic function for $a \in (-\infty,0)$. We have
\begin{equation} \label{zstarat-1}
        z^*(a) = - \frac{(a+1)^3}{108}  + \mathcal{O} \left((a+1)^4\right) \qquad \text{as } a \to -1,
        \end{equation}
which can be obtained from \eqref{zstarequation}.
The double root of $Q_2(z)$ turns out to be equal to
\[ x_0(a) = \frac{ (1+a)^3 + ( 6a^2 -42 a + 6) z^*(a) -15(1+a)(z^*(a))^{2} + 8(z^*(a))^3}
        {18 \left( 1 - a + a^2  -2(1+a)z^*(a) +(z^*(a))^{2} \right)},
        \]
which can be shown to also satisfy a cubic equation
\begin{multline} \label{x0a}
    (27a^2-46a+27) x_0^3 -3(a+1)(9a^2-14a+9) x_0^2 \\
    + 3a(11a^2-14a+11) x_0 - a(a+1)^3 = 0.
    \end{multline}
There are three real distinct solutions of \eqref{x0a} if $a<0$ and $x_0(a)$ is the middle one.
The expansion \eqref{x0expansion} follows from \eqref{x0a} and part (d) follows.

Part (b) follows immediately from part (a) and the Sokhotskii-Plemelj formulas
that tell us how to recover the density of a measure from its Cauchy transform.

Finally, to prove part (c), we suppose that $-1 < a <0$. It can then be shown
from the above formulas (it is not immediate, however) that $0 < x_0(a) < 1$.
Since $x_0(a)$ is a zero of the discriminant, the cubic equation \eqref{zetaequation}
has a double solution if $z = x_0(a)$. Since $0 < x_0(a) < 1$, we have $\xi_{0,+}(x_0(a)) = \overline{\xi_{2,+}(x_0(a))}$
and $\xi_1(x_0(a))$ is real. Thus we have $\xi_{0,+}(x_0(a)) = \xi_{2,+}(x_0(a))$, which means
since $\xi_{0,+} = \xi_{2,-}$ on $(0,1)$, that $\psi_2$ vanishes at $x_0(a)$ by \eqref{psi12}.
Since $x_0(a)$ is the only zero of the discriminant in $(a,1)$, it also follows that $x_0(a)$ is
the only zero of $\psi_2$, and that $\psi_1$ has no zeros.
Thus $\psi_1 > 0$ on $(a,0)$. It is a consequence of the fact that the point $0$ of the Riemann surface
is a double pole, that $\psi_1$ and $\psi_2$ have opposite signs near $0$. Thus $\psi_2(x) < 0$
for $0 < x < x_0(a)$ and part (c) of the proposition is proved in case $-1 < a < 0$.

The proof for $a < -1$ is similar.
\end{proof}

\section{Steepest descent analysis of the RH problem}
\label{sec:sda}

\subsection{First transformation}
\label{sectionfirsttransfo}

We start from the RH problem for $Y$ with $n_1 = n_2 =n$. We also take $a < 0$ close to $-1$
but for the moment it is arbitrary and fixed. We use the modified equilibrium measures $\mu_1$ and $\mu_2$
that are supported on the two intervals $[a,0]$ and $[0,1]$ respectively.

Define $g$-functions by
\begin{equation} \label{definitiongj}
    g_j(z)  =  \int \log(z-s) d\mu_{j}(s), \qquad j=1,2,
    \end{equation}
where we use the main branch of the logarithm.
Hence $g_{1}$ is defined with a branch cut on $(-\infty, 0]$ and  $g_{2}$ with a branch cut on $(-\infty,1]$.
The boundary values of the $g$-functions along the real axis are given by
\begin{equation} \label{boundaryvaluesg}
    \begin{aligned}
    g_{1,\pm}(x) & = - U^{\mu_{1}}(x) \pm \pi i \int_{x}^{0} d\mu_{1}(s), \\
    g_{2,\pm}(x) & = -U^{\mu_{2}}(x) \pm \pi i \int_{x}^{1} d\mu_{2}(s),
    \end{aligned}
    \end{equation}
    were $U^{\mu_{1}}$ and $U^{\mu_{2}}$ are the logarithmic potentials \eqref{defpotentials}.
From \eqref{variationalconditions} and \eqref{boundaryvaluesg} we obtain
\begin{equation} \label{boundaryvaluesg2}
    \begin{aligned}
    g_{1,+}(x) + g_{1,-}(x) + g_{2,\pm}(x) & = - l_1 \pm \frac{1}{2} \pi i, && x \in [a,0], \\
  g_1(x) + g_{2,+}(x) + g_{2,-}(x) & = -l_2, && \quad x \in [0,1],
  \end{aligned}
  \end{equation}

Now define the first transformation $Y \mapsto T$ as
\begin{multline} \label{YtoT}
    T(z)  = \begin{pmatrix} 1 & 0 & 0 \\ 0 & e^{-2n(l_{1} + \tfrac{1}{2} \pi i)} & 0 \\ 0 & 0 &  e^{-2nl_{2}} \end{pmatrix} Y(z) \\
        \times  \begin{pmatrix} e^{-2n(g_{1}(z) + g_{2}(z)) } & 0 & 0 \\ 0 & e^{2n(g_{1}(z)+l_{1} + \tfrac{1}{2} \pi i)} & 0 \\ 0 & 0 & e^{2n(g_{2}(z) + l_{2})} \end{pmatrix}.
\end{multline}

This transformation normalizes the RH problem at $\infty$, since the $g$-functions behave like
$\frac{1}{2} \log z + O(z^{-1})$ as $z \to \infty$. Thus $T(z) = I + \mathcal{O}(z^{-1})$ as $z \to \infty$.
The jumps for $T$ are conveniently expressed in terms of the two functions
$\varphi_{j}$, $j = 1,2$ defined by
\begin{equation}
    \begin{aligned}
    \varphi_{1}(z) & :=  -2g_{1}(z) - g_{2}(z) - l_{1} +
    \begin{cases} \tfrac{1}{2} \pi i & \text{ for } \im z > 0, \\
        - \tfrac{1}{2} \pi i & \text{ for } \im z < 0,
        \end{cases} \\
    \varphi_{2}(z) & := -2g_{2}(z) - g_{1}(z) - l_{2} +
    \begin{cases} \pi i & \text{ for } \im z > 0, \\
        -  \pi i & \text{ for } \im z < 0.
        \end{cases}
    \end{aligned}
    \label{definitionvarphi} \end{equation}
Then by \eqref{jY}, \eqref{YtoT}, and \eqref{boundaryvaluesg2} one obtains the jump matrices
\begin{align}
    J_{T}(x) & = \begin{pmatrix} e^{2n\varphi_{1,+}(x)} & w_{1}(x) & 0 \\ 0 & e^{2n\varphi_{1,-}(x)} & 0 \\ 0 & 0 & 1
        \end{pmatrix} \textrm{ for } x \in (a,0) \label{jT1}, \\
     J_T(x) &  = \begin{pmatrix} e^{2n\varphi_{2,+}(x)} & 0 & w_{2}(x) \\ 0 & 1 & 0 \\ 0 & 0 & e^{2n\varphi_{2,-}(x)}
     \end{pmatrix}  \textrm{ for } x \in (0,1) \label{jT2}.
     \end{align}

Thus $T$ satisfies the following RH problem.
\begin{itemize}
\item $T$ is an analytic $3\times 3$ matrix valued function on $\mathbb{C} \setminus [a,1]$,
\item $T$ satisfies jump conditions $T_{+}(x) = T_{-}(x)J_{T}(x)$ for $x \in (a,0) \cup (0,1)$, with $J_{T}$ given by \eqref{jT1} and \eqref{jT2},
\item $T$ is normalized at infinity:
\begin{equation}
    T(z) = I + \mathcal{O}\left(z^{-1} \right) \textrm{ as } z \to \infty, \end{equation}
\item near $a$, $0$ and $1$ the function $T$ has the same behavior as $Y$, see \eqref{Yneara}, \eqref{Ynear1}, and \eqref{Ynear0}.
\end{itemize}

\subsection{Second transformation: opening of the lenses}
\label{sectionlenses}

The functions $e^{2n\varphi_{k,\pm}}, k = 1,2$ appearing in the jump matrices $J_{T}$ in \eqref{jT1} and \eqref{jT2}
are rapidly oscillating for large $n$, since the boundary values $\varphi_{k,\pm}$ are purely imaginary.  One may easily check that
\begin{align}
    \varphi_{1,\pm }(x) & = \mp 2\pi i \int_{x}^{0} d\mu_{1}(s) \quad \textrm{ for } x \in (a,0),  \label{boundaryvaluesphi1} \\
    \varphi_{2,\pm}(x) & = \pm 2 \pi i \int_{0}^{x} d\mu_{2}(s) \quad \textrm{ for } x \in (0,1). \label{boundaryvaluesphi2}
    \end{align}

The oscillations are turned into exponential decay by the so-called
opening of the lenses. Choose smooth paths $\Sigma_{1}^{\pm}$ connecting
$a$ and $0$ with $\Sigma_{1}^{+}$ in the upper half-plane and $\Sigma_1^-$
in the lower half plane. Similarly, choose paths $\Sigma_{2}^{+}$ and $\Sigma_{2}^{-}$ connecting $0$ and $1$.
Define
\begin{equation} \label{SigmaS}
    \Sigma_S := \Sigma_{1}^{\pm} \cup \Sigma_2^{\pm} \cup [a,1].
      \end{equation}
The intervals $[a,0]$, $[0,1]$ and the paths $\Sigma_{i}^{\pm}$ define $4$ bounded regions
that are referred to as the lenses around $[a,0]$ and $[0,1]$, see Figure~\ref{figuresigmas}.

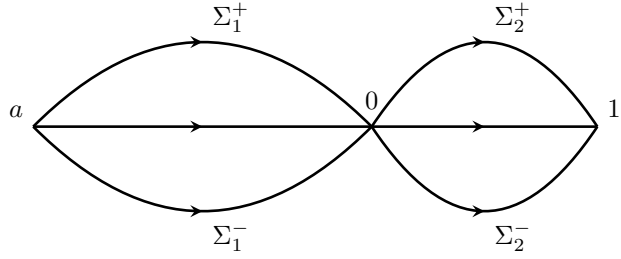
\begin{figure}[t]
\centering
\begin{tikzpicture}[scale = 3,line width = 1, decoration = {markings, mark = at position .5 with {\arrow{stealth};}} ]
\draw [postaction = {decorate}] (-1.5,0) node[above left ]{$a$} -- (0,0) node[above=3pt]{0};
\draw [postaction = {decorate}](0,0) -- (1,0) node[above right]{1} ;
\draw [postaction = {decorate}](-1.5,0) .. controls (-1,.5) and (-.5,.5) .. node[above right]{$\Sigma_{1}^{+}$}(0,0);
\draw [postaction = {decorate}](-1.5,0) .. controls (-1,-.5) and (-.5,-.5) .. node[below right]{$\Sigma_{1}^{-}$} (0,0);
\draw [postaction = {decorate}](0,0) .. controls (.3333,.5) and (.6666,.5) .. node[above right]{$\Sigma_{2}^{+}$}(1,0);
\draw [postaction = {decorate}](0,0) .. controls (.3333,-.5) and (.6666,-.5) .. node[below right]{$\Sigma_{2}^{-}$}(1,0);
\end{tikzpicture}
\caption{The contour $\Sigma_S$ and the lenses around $[a,0]$ and $[0,1]$.}
\label{figuresigmas}
\end{figure}

For $j=1,2$, let $V_j$ be a simply connected neighborhood of $\Delta_j$ such that
the analytic factor $h_{j}$ in the weight function $w_j$ is analytic and non-zero in $V_j$.
We assume that $\Sigma^{\pm}_j \subset V_j$ for $j=1,2$.
Then $w_j$ has an analytic continuation from $\Delta_j$ to $V_j$ with some cuts, which we also denote by $w_{j}$:
\begin{equation} \label{w1w2continuation}
  \begin{aligned}
   w_{1}(z) & = (z-a)^{\alpha} (-z)^{\beta} h_{1}(z), &
   \textrm{ for } z \in V_{1} \setminus \left((-\infty, a] \cup [0,\infty) \right), \\
   w_{2}(z) & = z^{\beta} (1-z)^{\gamma} h_{2}(z), &
   \textrm{ for } z \in V_{2} \setminus \left((-\infty, 0] \cup [1,\infty) \right).
   \end{aligned}
\end{equation}

Then, following \cite[section 4]{kmvv04}, we define the next transformation $T \mapsto S$ by
\begin{equation}
    S(z) = \begin{cases} T(z) \begin{pmatrix} 1 & 0 & 0 \\
    - w_{1}(z)^{-1} e^{2n\varphi_{1}(z)} & 1 & 0 \\ 0 & 0 & 1 \end{pmatrix}, &
    \begin{array}{l} z \textrm{ in the upper part of} \\
        \textrm{the lens around } [a,0],
        \end{array} \\
    T(z)\begin{pmatrix} 1 & 0 & 0 \\
    w_{1}(z)^{-1} e^{2n\varphi_{1}(z)} & 1 & 0 \\ 0 & 0 & 1 \end{pmatrix}, &
    \begin{array}{l} z \textrm{ in the lower part of} \\
        \textrm{the lens around } [a,0],
        \end{array}  \end{cases}  \label{TtoS1} \end{equation}
\begin{equation}
    S(z) = \begin{cases} T(z) \begin{pmatrix} 1 & 0 & 0 \\ 0 & 1 & 0 \\
    - w_{2}(z)^{-1} e^{2n\varphi_{2}(z)} & 0 & 1 \end{pmatrix}, &
        \begin{array}{l} z \textrm{ in the upper part of } \\
        \textrm{ the lens around } [0,1],
        \end{array} \\
    T(z)\begin{pmatrix} 1 & 0 & 0 \\ 0 & 1 & 0 \\
    w_{2}(z)^{-1} e^{2n\varphi_{2}(z)} & 0 & 1 \end{pmatrix}, &
        \begin{array}{l} z \textrm{ in the lower part of } \\
        \textrm{ the lens around } [0,1],
        \end{array} \end{cases}   \label{TtoS2} \end{equation}
and
\begin{equation}
    S(z) = T(z) \quad \textrm{elsewhere}.
    \end{equation}

It is clear that this transformation does not affect the behavior at infinity.
The jump matrix $J_S$ for $S$ on the intervals $(a,0)$ and $(0,1)$ are
\begin{equation}
    J_{S}(x)  = \begin{cases}
        \begin{pmatrix} 0 & w_{1}(x) & 0 \\- w_{1}(x)^{-1} & 0 & 0 \\ 0 & 0 & 1 \end{pmatrix},
        & x \in (a,0), \\
       \begin{pmatrix} 0 & 0 & w_{2}(x) \\ 0 & 1 & 0 \\ -w_{2}(x)^{-1} & 0 & 0  \end{pmatrix},
      &  x \in (0,1).
       \end{cases} \label{jS1}
\end{equation}
The transformation has introduced jumps on $\Sigma_j^{\pm}$ which are
\begin{equation}
    J_{S}(z) = \begin{cases}
        \begin{pmatrix} 1 & 0 & 0 \\ w_{1}(z)^{-1} e^{2n\varphi_{1}(z)} & 1 & 0 \\ 0 & 0 & 1 \end{pmatrix},
        & z \in \Sigma_{1}^{\pm}, \\
        \begin{pmatrix} 1 & 0 & 0 \\ 0 & 1 & 0 \\ w_{2}(z)^{-1} e^{2n\varphi_{2}(z)} & 0 & 1 \end{pmatrix},
        & z \in \Sigma_{2}^{\pm}.
        \end{cases} \label{jS2} \end{equation}

Finally, the behavior near $a$, $0$ and $1$ changes because of the factors $w_{j}^{-1}$
in the transformation \eqref{TtoS1}-\eqref{TtoS2}.  The Riemann-Hilbert problem for $S$ then reads:
\begin{itemize}
\item $S$ is analytic on $\mathbb{C} \setminus \Sigma_{S}$,
\item $S$ has jumps $S_{+} = S_{-}J_{S}$, where $J_{S}$ is given by \eqref{jS1} and \eqref{jS2},
\item $S(z) = I + \mathcal{O}\left(z^{-1}\right)$ as $z \to \infty$,
\item near the endpoints of the intervals $S$ behaves as
\begin{equation}
    S(z) = \mathcal{O} \begin{pmatrix} \epsilon_{1}(z) & \epsilon_{2}(z) & 1 \\
    \epsilon_{1}(z) & \epsilon_{2}(z) & 1 \\ \epsilon_{1}(z) & \epsilon_{2}(z) & 1  \end{pmatrix},
    \quad \textrm{ as } z  \to a, \label{Sneara} \end{equation}
where
\[ \epsilon_{1}(z), \epsilon_{2}(z) = \begin{cases}
    1, |z-a|^{\alpha} & \textrm{ if } \alpha< 0, \\
    \log|z-a|, \log|z -a| & \textrm{ if } \alpha =0, \\
    |z-a|^{-\alpha}, 1 & \textrm{ if } \alpha > 0, z \textrm{ inside the lens}, \\
    1 , 1 & \textrm{ if } \alpha > 0, z \textrm{ outside the lens},
    \end{cases} \]
\begin{equation}
    S(z) = \mathcal{O} \begin{pmatrix} \epsilon_{1}(z) & 1 & \epsilon_{2}(z) \\
    \epsilon_{1}(z) & 1 & \epsilon_{2}(z) \\
    \epsilon_{1}(z) & 1 & \epsilon_{2}(z) \end{pmatrix},
    \quad \textrm{ as } z  \to 1, \label{Snear1} \end{equation}
where
\[ \epsilon_{1}(z), \epsilon_{2}(z) = \begin{cases}
        1, |z-1|^{\gamma} & \textrm{ if } \gamma< 0,  \\
            \log|z-1|,\log|z-1| & \textrm{ if } \gamma =0, \\
            |z-1|^{-\gamma},1 & \textrm{ if } \gamma > 0, z \textrm{ inside the lens}, \\
            1 , 1 & \textrm{ if } \gamma > 0, z \textrm{ outside the lens}, \end{cases} \]
\begin{equation}
    S(z) = \mathcal{O}\begin{pmatrix} \epsilon_{1}(z) & \epsilon_{2}(z) & \epsilon_{2}(z) \\
        \epsilon_{1}(z) & \epsilon_{2}(z) & \epsilon_{2}(z) \\
        \epsilon_{1}(z) & \epsilon_{2}(z) & \epsilon_{2}(z)  \end{pmatrix},
        \quad \textrm{ as } z  \to 0, \label{Snear0} \end{equation}
where
\[  \epsilon_{1}(z),\epsilon_{2}(z) = \begin{cases}
            1, |z|^{\beta} & \textrm{ if } \beta< 0, \\
            \log|z|, \log|z| & \textrm{ if } \beta =0, \\
            |z|^{-\beta} , 1 & \textrm{ if } \beta > 0, z \textrm{ inside the lenses}, \\
            1,1 & \textrm{ if } \beta > 0, z\textrm{ outside the lenses.}
            \end{cases}  \]
\end{itemize}

For later analysis it will be important to know how $\re \varphi_1$ and $\re \varphi_2$
behave on the lips of the lenses. From \eqref{jS2} we see that we would like to have
\begin{equation} \label{phijineq}
    \re \varphi_j(z) < 0 \quad \text{for } z \in \Sigma_j^{\pm},
\end{equation}
for $j=1,2$. The inequality \eqref{phijineq} will indeed hold if $\mu_j$
is a positive measure, and this can be proven using the Cauchy-Riemann equations.
Recall however, that $\mu_1$ and $\mu_2$ are signed measures. The inequality
\eqref{phijineq} will be violated for $z$ on the parts of $\Sigma_j$ that are close
to the interval where $\mu_j$ is negative. By Proposition \eqref{prop:psi12} we
have that $\mu_1$ is negative on $(x_0(a), 0)$ if $a < -1$, and
that $\mu_2$ is negative near $(0, x_0(a))$ if $-1 < a < 0$, where
$x_0(a) = \mathcal{O}((a+1)^3)$ as $a \to -1$.

We write $\varphi_j(z;a)$ to emphasize the dependence on $a$.

\begin{lemma} \label{lem:revarphij}
There exist positive constants $C_0$ and $C_1$, independent of $a$,
such that for every $a$ sufficiently close to $-1$, we have
\begin{equation} \label{revarphij}
    \re \varphi_j(z;a) \leq C_0 |a+1| |z|^{1/3} - C_1 |z|^{2/3},
    \quad z \in \Sigma_j^{\pm}, \, |z| < 1/2.
       \end{equation}
for $j=1,2$,
\end{lemma}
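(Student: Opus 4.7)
The plan is to expand $\varphi_j(z;a)$ around $z = 0$ in the uniformizing coordinate $\xi$ of the Riemann surface $\mathcal{R}$ from \eqref{definitionf}, and then to choose the lens contours $\Sigma_j^\pm$ through open sectors where the leading contribution is strictly negative. The starting point is the identification $\varphi_j'(z;a) = \zeta_j(z;a) - \zeta_0(z;a)$, which follows from \eqref{definitionvarphi} and \eqref{zeta012}. Pulling back by $z = z(\xi;a)$, the three branches $\zeta_k$ become a single rational function $\zeta(\xi;a)$ depending analytically on $a$, and near $\xi = 0$ (the unique pre-image of $z=0$, a triple critical point of $z$) we have $z(\xi;a) = \frac{-4a}{1-a}\xi^3 (1 + O(\xi^3))$, so $\xi \sim c_k(a) z^{1/3}$ on the $k$-th sheet with $c_k(-1) = \omega^k 2^{-1/3}$.

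Integrating $\widetilde\varphi_j'(\xi;a) = \varphi_j'(z(\xi)) z'(\xi)$ gives a Taylor expansion
\[
\widetilde\varphi_j(\xi;a) := \varphi_j(z(\xi);a) = A_j(a) + B_j(a)\xi + C_j(a)\xi^2 + O(\xi^3),
\]
with coefficients real-analytic in $a$. By Proposition~\ref{prop:psi12} and the algebraic equation \eqref{zetaequation}, the pole of $\zeta(\xi;a)$ at $\xi = 0$ has order $2$ for $a\neq -1$ but only order $1$ at $a=-1$ (where $z^*(-1)=0$), and the perturbation $\xi_1(z(\xi);a) = \omega \xi + O(a+1)$ of the cube-root sheet shift at $a=-1$ yields after a direct expansion that $B_j(-1) = 0$ and $B_j(a) = O(a+1)$ for $a$ close to $-1$. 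The variational conditions \eqref{variationalconditions}, which hold throughout the modified equilibrium problem, force $\re\varphi_j \equiv 0$ on the interval through $0$ and hence $A_j(a) \in i\mathbb{R}$.

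At the symmetric point $a=-1$ the coefficient $C_j(-1)$ can be computed explicitly from \eqref{zetaequation} with $z^*(-1)=0$, and comes out nonzero with a specific argument on each sheet. There are then open sectors of the $\xi$-plane in which $\re(C_j(-1)\xi^2) \leq -c |\xi|^2$ for some $c>0$, and these sectors avoid the image of $\Delta_j$ near $\xi = 0$ (on which $\re\widetilde\varphi_j = 0$), so I choose $\Sigma_j^\pm$ to pass through them near $0$. Away from $0$ the lens is continued using the standard Cauchy--Riemann argument, which applies since $\mu_j$ is a positive measure on the bulk of $\Delta_j$ at $a = -1$.

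Finally, for $a$ sufficiently close to $-1$ the analytic dependence gives $C_j(a) = C_j(-1) + O(a+1)$, and on the chosen lens with $|z| < 1/2$ (hence $|\xi| \asymp |z|^{1/3}$) we obtain
\[
\re\varphi_j(z;a) \leq -c|\xi|^2 + C|a+1|\,|\xi|^2 + C'|a+1|\,|\xi| + O(|\xi|^3).
\]
For $|a+1|$ small the term $C|a+1|\,|\xi|^2$ is absorbed into $-\tfrac{c}{2}|\xi|^2$, and $O(|\xi|^3) = O(|z|) \leq O(|z|^{2/3})$ for $|z| < 1/2$ is absorbed similarly, leaving $\re\varphi_j(z;a) \leq -C_1 |z|^{2/3} + C_0 |a+1|\,|z|^{1/3}$ as required. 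The main technical obstacle is verifying that $B_j(-1) = 0$ and determining the arguments of $C_j(-1)$ precisely enough to guarantee that the good sectors are compatible with the lens geometry; both points rest on the explicit algebra of \eqref{zetaequation} at $a=-1$, $z^*=0$, combined with the $z \mapsto -z$ symmetry of the equilibrium problem at the critical value.
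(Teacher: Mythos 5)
Your overall strategy is the same as the paper's: expand $\varphi_j$ near $z=0$ in fractional powers of $z$ (your $\xi$-Taylor expansion is equivalent, since $\xi \sim c\, z^{1/3}$), show the coefficient of $z^{1/3}$ is $O(a+1)$ because $z^*(-1)=0$ drops the pole order of $\zeta$ at the origin, and observe that the coefficient of $z^{2/3}$ is nonzero with the right argument so the lens can be opened into a ``good'' sector. That is exactly the structure of the paper's proof, which computes $\zeta_j(s) = c_0(a)\omega^{j}s^{-2/3} + c_1(a)\omega^{2j}s^{-1/3} + \mathcal{O}(1)$ with $c_0(a) = \mathcal O(a+1)$ and $c_1(a) \to -2^{1/3}/2$, integrates, and then reads off the sign of $\re\bigl(-\tfrac{3}{2}\sqrt{3}\,i\,c_1(a)\,z^{2/3}\bigr)$ on the lens.

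However there are concrete gaps in your write-up. First, the sentence invoking ``the perturbation $\xi_1(z(\xi);a) = \omega\xi + O(a+1)$'' is not coherent as stated: by definition $\xi_1(z(\xi)) = \xi$ on the relevant sheet, not $\omega\xi$. The true mechanism (which you do identify separately) is the drop in pole order of $\zeta$ at $\xi=0$ when $z^*(-1)=0$; the garbled sentence doesn't establish $B_j(-1)=0$. Second, you assert that $C_j(-1)$ ``comes out nonzero with a specific argument'' compatible with a lens opening at a positive angle, but this is precisely the point that needs verification: the paper computes $c_1(-1)<0$ real so that the $z^{2/3}$ coefficient is a positive multiple of $i$, giving $\re\bigl(iA z^{2/3}\bigr) = -A|z|^{2/3}\sin\bigl(\tfrac{2}{3}\arg z\bigr)<0$; without pinning down this argument the sector claim is unproved. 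Third, your absorption step pushes the Taylor expansion $A_j+B_j\xi+C_j\xi^2+\mathcal O(\xi^3)$ out to $|\xi| \approx (1/2)^{1/3}$, but the $\mathcal O(\xi^3)$ constant is only uniform on a fixed small disk; the paper instead proves \eqref{revarphij} on $|z|<r_0$ from the local expansion, and then handles the annulus $r_0 \le |z|<1/2$ by the strict inequality $\re\varphi_j(z;-1) \le -C_3<0$ together with uniform convergence $\varphi_j(\cdot;a)\to\varphi_j(\cdot;-1)$. Without one of these two arguments the claim on the full range $|z|<1/2$ is not established.
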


\begin{proof}
We have by the definitions \eqref{zeta012}, \eqref{definitiongj}, and \eqref{definitionvarphi} that
$\varphi_j' = \zeta_j - \zeta_0$ for $j=1,2$. The constant in \eqref{definitionvarphi} is taken
so that $\varphi_j(0)$ which means that
\begin{equation} \label{varphijintegral}
    \varphi_j(z;a) = \int_0^z (\zeta_j(s;a) - \zeta_0(s;a)) ds
    \end{equation}
where $\zeta_0, \zeta_1, \zeta_2$ are the three solution of the cubic equation \eqref{zetaequation},
where we emphasize the dependence on $a$.

As $s \to 0$ with $\im s > 0$ we can compute from \eqref{zetaequation} that
\begin{align} \label{zeta012at0}
    \zeta_0(s;a) & =
        c_0 \omega^2 s^{-2/3} + c_1 \omega s^{-1/3} + \mathcal{O}(1), \\
    \zeta_1(s;a) & = c_0 s^{-2/3} + c_1 s^{-1/3} + \mathcal{O}(1), \\
    \zeta_2(s;a) & = c_0 \omega s^{-2/3} + c_1 \omega^2 s^{-1/3} + \mathcal{O}(1)
    \end{align}
     uniformly for $a$ close to $-1$,
    with real constants $c_0 = (-z^*(a)/(4a))^{1/3}$ and $c_1 = - (2z^*(a) + a + 1)/(12 a c_0)$.
     Because of the behavior \eqref{zstarat-1} of $z^*(a)$ we have that
\begin{equation} \label{c0c1at-1}
    \begin{aligned}
    c_0 = c_0(a) & = - \tfrac{2^{2/3}}{12} (a+1) + \mathcal{O}((a+1)^2) \\
  c_1 = c_1(a) & =  - \tfrac{2^{1/3}}{2}  + \mathcal{O}(a+1)
  \end{aligned}
  \end{equation}
  as $a \to -1$.

 Using \eqref{zeta012at0} in \eqref{varphijintegral}  we find
 \begin{equation} \label{varphi2z}
 \varphi_2(z;a) =  3^{3/2} i c_0(a) z^{1/3} - \tfrac{1}{2} 3^{3/2}   i c_1(a) z^{2/3} + \mathcal{O}(z)
 \end{equation}
 as $z \to 0$ with $\im z > 0$.  By \eqref{c0c1at-1} we have that $c_1(a)$ tends to a negative constant as $a \to -1$.
 Since we may assume that the lens is opened with a positive angle at $0$, we find that
\[ \re \left(- \frac{3}{2} \sqrt{3} i c_1(a) z^{2/3} \right) \leq - C_1 |z|^{2/3}, \qquad z \in \Sigma_2^{+} \]
for some constant $C_1 > 0$ independent of $a$. Using this in \eqref{varphi2z} we obtain
\eqref{varphijintegral} for $j=2$ and $z \in \Sigma_2^+$ in a fixed size neighborhood of $z=0$,
say $|z| < r_0$.
The  inequality \eqref{varphijintegral} then also holds for $|z| < 1/2$ (maybe with different constant $C_1$), since
$\varphi_2(z;a) \to \varphi_2(z;-1)$ as $a \to -1$ uniformly for $r_0 \leq |z| \leq 1/2$,   and
$\re \varphi_2(z;-1) < -C_3 < 0$ for $z \in \Sigma_2^+$, $r_0 \leq |z| \leq 1/2$,  and some $C_3 > 0$.

The inequality \eqref{varphijintegral} for $j=2$ and $z \in \Sigma_2^-$ and for $j=1$ follow in a similar way.
\end{proof}

It follows from \eqref{revarphij} that we indeed have that
$\re \varphi_j(z;a) < 0$ for $z \in \Sigma_j^{\pm}$, except for $z$ in a small exceptional
neighborhood of $z=0$, whose radius shrinks as $\mathcal{O}\left((a+1)^3 \right)$ as $a \to-1$.

\subsection{Outer parametrix}
\label{sectionouter}

The next step is to construct an approximation to $S$. This so-called parametrix consists
of an outer parametrix $N$ that gives an approximation away from the endpoints $a$, $0$ and $1$
and local parametrices $P$ around each of the endpoints.

\subsubsection{Riemann-Hilbert problem for $N$}
The outer parametrix $N$ should satisfy
\begin{itemize}
\item $N$ is analytic on $\mathbb{C} \setminus  [a,1]$,
\item $N$ satisfies the jump conditions
\begin{equation} N_{+}(x) = \left\{\begin{array}{ccc} N_{-}(x)\left(\begin{array}{ccc} 0 & w_{1}(x) & 0 \\ -w_{1}(x)^{-1} & 0 & 0 \\ 0 & 0 & 1 \end{array}\right) & \textrm{ if } x \in (a,0), \\ N_{-}(x) \left(\begin{array}{ccc} 0 & 0 & w_{2}(x) \\ 0 & 1 & 0 \\ - w_{2}(x)^{-1} & 0 & 0 \end{array}\right) & \textrm{ if } x \in (0,1),  \end{array} \right. \label{jumpsn} \end{equation}
\item $N$ is normalized at infinity:
\begin{equation}
N(z) = I + \mathcal{O}\left( \frac{1}{z}\right) \textrm{ as } z \to \infty. \label{asymptoticconditionn}
\end{equation}
\end{itemize}

\subsubsection{Solution in a special case} \label{sectionSpecialN}
First we will find a solution $\widetilde{N}$ to this  problem for the case that both weights $w_{1}$ and $w_{2}$
are identically $1$ on their respective intervals.
We solve the problem for $\widetilde{N}$ by using the  Riemann surface $\mathcal{R}$ introduced before.
There  is a similar construction in \cite{blku04} and so we do not go into much detail here.

The function $\xi = \xi_j(z)$ maps the sheet $\mathcal R_j$ of the Riemann surface
onto the domain $\widetilde{\mathcal R}_j$ as shown in Figure \ref{imagesxi}
that are separated by two closed contours $\gamma_1$ and $\gamma_2$ that we orient
in the clockwise direction.
We write
\[ p_j = p_j(a) = \xi_j(\infty), \qquad j = 0,1,2. \]

\begin{figure}[t]
\begin{center}
\begin{tikzpicture}[scale = 10, line width = 2,decoration = {markings, mark = at position .10 with {\arrowreversed[black]{stealth};},mark = at position .325 with {\arrowreversed[black]{stealth};},mark = at position .6 with {\arrow[black]{stealth};},mark = at position .82 with {\arrow[black]{stealth};}}]
\definecolor{vgray}{gray}{.8};
\draw[postaction = {decorate},color = black, fill=vgray] plot[smooth cycle] coordinates{
(0,0) 
(0.08590953109, -0.1335884204)
(.1148060708, -.1675132670) 
(.1581949169, -.2084176381) 
(.1945044293, -.2348288530) 
(.2275727664, -.2536513070) 
(.2887164544, -.2772033557) 
(.3462617751, -.2875451803) 
(.4018033470, -.2872880819) 
(.4560833681, -.2770127689) 
(.5095061476, -.2558924319) 
(.5623148769, -.2211114187) 
(.5885401974, -.1965206258) 
(.6146662821, -.1644524465) 
(.6407050855, -.1190351095) 
(.6536949634, -0.0851242451)
(.6666666666, 0)
(.6536949634, 0.0851242451)
(.6407050855, .1190351095)
(.6146662821, .1644524465)
(.5885401974, .1965206258)
(.5623148769, .2211114187)
(.5095061476, .2558924319)
(.4560833681, .2770127689)
(.4018033470, .2872880819)
(.3462617751, .2875451803)
(.2887164544, .2772033557)
(.2275727664, .2536513070)
(.1945044293, .2348288530)
(.1581949169, .2084176381)
(.1148060708, .1675132670)
(0.08590953109,0.1335884204)
(0,0)
(-0.08590953109,-0.1335884204)
(-.1148060708, -.1675132670)
(-.1581949169, -.2084176381)
(-.1945044293, -.2348288530)
(-.2275727664, -.2536513070)
(-.2887164544, -.2772033557)
(-.3462617751, -.2875451803)
(-.4018033470, -.2872880819)
(-.4560833681, -.2770127689)
(-.5095061476, -.2558924319)
(-.5623148769, -.2211114187)
(-.5885401974, -.1965206258)
(-.6146662821, -.1644524465)
(-.6407050855, -.1190351095)
(-.6536949634, -0.0851242451)
(-.6666666666, 0)
(-.6536949634, 0.0851242451)
(-.6407050855, .1190351095)
(-.6146662821, .1644524465)
(-.5885401974, .1965206258)
(-.5623148769, .2211114187)
(-.5095061476, .2558924319)
(-.4560833681, .2770127689)
(-.4018033470, .2872880819)
(-.3462617751, .2875451803)
(-.2887164544, .2772033557)
(-.2275727664, .2536513070)
(-.1945044293, .2348288530)
(-.1581949169, .2084176381)
(-.1148060708, .1675132670)
(-0.08590953109, 0.1335884204)
};
\filldraw (0,0) circle (.005) node[above=2mm]{$0$};
\filldraw (-2/3,0) circle (.005) node[left]{$-1$}; 
\filldraw (2/3,0) circle (.005) node[right]{$1$}; 
\filldraw (-.434,0) circle (.005) node[above]{$p_{1}$};
\filldraw (.334,0) circle (.005) node[above]{$p_{2}$};
\draw (.4018033470, -.2872880819) node[below]{$\gamma_{2}^{-}$};
\draw (-.4018033470, -.2872880819) node[below]{$\gamma_{1}^{-}$};
\draw (-.4018033470, .2872880819) node[above]{$\gamma_{1}^{+}$};
\draw (.4018033470, .2872880819) node[above]{$\gamma_{2}^{+}$};
\draw (0,.3) node{$\widetilde{\mathcal{R}}_{0}$};
\draw (-.3,-.1) node{$\widetilde{\mathcal{R}}_{1}$};
\draw (.25,-.1) node{$\widetilde{\mathcal{R}}_{2}$};
\end{tikzpicture}
\caption{The images of the sheets of $\mathcal{R}$ under the functions $\xi_j$.
The location of $p_{1} = \xi_1(\infty)$ and $p_{2} = \xi_{2}(\infty)$ depend on the choice of $a$,
but the contours $\gamma_{1}^{\pm}$ and $\gamma_{2}^{\pm}$ do not.}
\label{imagesxi}
\end{center}
\end{figure}
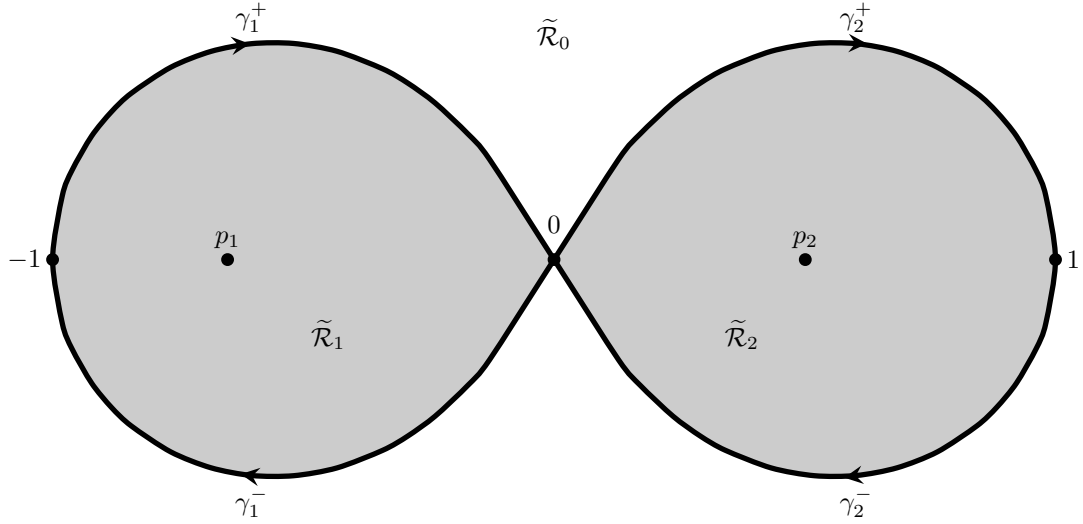

A solution  $\widetilde{N}$ is given in the form
\begin{equation}
    \widetilde{N}(z) = \begin{pmatrix} \widetilde{N}_{0}(\xi_{0}(z)) & \widetilde{N}_{0}(\xi_{1}(z)) & \widetilde{N}_{0}(\xi_{2}(z)) \\ \widetilde{N}_{1}(\xi_{0}(z)) & \widetilde{N}_{1}(\xi_{1}(z)) & \widetilde{N}_{1}(\xi_{2}(z)) \\ \widetilde{N}_{2}(\xi_{0}(z)) & \widetilde{N}_{1}(\xi_{2}(z)) & \widetilde{N}_{2}(\xi_{2}(z)) \end{pmatrix}, \label{definitionn}
\end{equation}
with the following functions $\widetilde{N}_{j}$
that are analytic on $\overline{\mathbb{C}} \setminus (\gamma_1^{\pm} \cup \gamma_{2}^{\pm})$
\begin{equation}
    \widetilde{N}_j(\xi)  =  \frac{p_j(p_j^2 - 1)^{1/2}}{\prod\limits_{\substack{i=0 \\ i \neq j}}^2 (p_j -p_i)}
        \frac{1}{\xi (\xi^2 - 1)^{1/2}} \prod\limits_{\substack{i=0 \\ i \neq j}}^2 (\xi - p_i)
\label{definitionnk}
\end{equation}
with appropriate modifications if $p_0 = \infty$ (which happens if $a=-1$).
The branch cut of the square roots $(\xi^2 - 1)^{1/2}$ and $(p_j^2-1)^{1/2}$ in \eqref{definitionnk} is defined along
$\gamma_1^+ \cup \gamma_2^+$, see Figure \ref{imagesxi}.
It can then be checked that the matrix function $\widetilde{N}$ defined by \eqref{definitionn}--\eqref{definitionnk}
indeed satisfies the conditions in the Riemann-Hilbert problem for $\widetilde{N}$.

\subsubsection{Solution in general case}
Now we turn to the problem for $N$, with jumps involving $w_{1}$ and $w_{2}$.
This can be solved using analogues to the Szeg\H{o} function as  in \cite{kmw09,kmvv04}.
We look for three functions $D_0$, $D_{1}$ and $D_{2}$
satisfying
\begin{enumerate}
\item[(a)] $D_0$ is analytic and non-zero in $\overline{\mathbb C} \setminus [a,1]$,
\item[(b)] $D_1$ is analytic and non-zero in $\overline{\mathbb C} \setminus [a,0]$,
\item[(c)] $D_2$ is analytic and non-zero in $\overline{\mathbb C} \setminus [0,1]$,
\item[(d)] $D_0$, $D_1$ and $D_2$ have limiting values on $(a,0)$ and $(0,1)$ such that
\[ \frac{D_{1,+}}{D_{0,-}} = \frac{D_{1,-}}{D_{0,+}} = w_1 \qquad \text{on } (a,0) \]
and
\[ \frac{D_{2,+}}{D_{0,-}} = \frac{D_{2,-}}{D_{0,+}} = w_2 \qquad \text{on } (0,1). \]
\end{enumerate}

Having $D_0$, $D_1$ and $D_2$ we define $N$ by
\begin{equation}
    N(z) := \begin{pmatrix} D_0(\infty)^{-1} & 0 & 0 \\ 0 & D_1(\infty)^{-1} & 0 \\ 0 & 0 & D_2(\infty)^{-1} \end{pmatrix}
        \widetilde{N}(z) \begin{pmatrix} D_{0}(z) & 0 & 0 \\ 0 & D_{1}(z) & 0 \\ 0 & 0 & D_{2}(z) \end{pmatrix}.
        \label{factorizationn} \end{equation}
We will abbreviate this as
\begin{equation}
N(z) = D_{\infty}^{-1} \widetilde{N}(z) D(z) \label{ddinfty},
\end{equation}
and one can check that $N$ indeed solves the Riemann-Hilbert problem for $N$.

From the jump properties of $D_0$, $D_1$ and $D_2$ it follows that
\[ (D_0 D_1 D_2)_+ = (D_0 D_1 D_2)_- \qquad \text{on } (a,0) \text{ and } (0,1). \]
Thus $D_0 D_1 D_2$ is analytic across these cuts. We also make sure that the possible
singularities at $a$, $0$ and $1$ are removable. Then $D_0 D_1 D_2$ is a constant
and we can choose a normalization such that
\[ D_0 D_1 D_2 \equiv 1. \]

\subsubsection{Szeg\H{o} functions}
In order to find $D_0$, $D_1$ and $D_2$ we write
\begin{equation}
    D_j(z)  = \mathcal D(\xi_j(z)), \qquad j=0,1,2,
\end{equation}
for some yet to be determined function $\mathcal D$ on the $\xi$-Riemann sphere.
Recall that $\xi_0$, $\xi_1$ and $\xi_2$ are the mapping functions from the respective
sheets of the Riemann surface $\mathcal R$ to the Riemann sphere.

Then $\mathcal D$ has to satisfy
\begin{enumerate}
\item[(a)] $\mathcal D : \overline{\mathbb{C}} \setminus (\gamma_1 \cup \gamma_2) \to \mathbb C $
is analytic and non-zero.
\item[(b)] On $\gamma_1 \cup \gamma_2$ there is a jump
    \begin{equation}
     \mathcal D_+(\xi) = w_j(z) \mathcal D_-(\xi),  \qquad  \xi \in \gamma_j, \quad j = 1,2,
        \end{equation}
        where $z = z(\xi)$ is related to $\xi$ by \eqref{definitionf}.
\end{enumerate}

Then by taking logarithms we get
    \begin{equation}
     \log \mathcal  D_+(\xi) = \log w_j(z) + \log \mathcal D_-(\xi),  \qquad  \xi \in \gamma_j^+, \qquad j=1,2,
        \end{equation}
which by the Sokhotskii Plemelj formula is solved by the Cauchy transforms
\[ \log \mathcal D(\xi) = \frac{1}{2\pi i}
    \left(\int_{\gamma_1} \frac{\log w_1(z(s))}{s-\xi} ds + \int_{\gamma_2} \frac{\log w_2(z(s))}{s-\xi} ds \right) + C_1 \]
where $C_1$ is an arbitrary constant.
Thus
\[ \mathcal D(\xi) = C \exp \left[ \frac{1}{2\pi i}
    \left(\int_{\gamma_1} \frac{\log w_1(z(s))}{s-\xi} ds + \int_{\gamma_2} \frac{\log w_2(z(s))}{s-\xi} ds \right)\right] \]
with $C = e^{C_1}$ and
\begin{equation}  \label{expressiond}
    D_j(z) = C \exp \left[ \frac{1}{2\pi i}
        \left(\int_{\gamma_1} \frac{\log w_1(z(s))}{s-\xi_j(z)} ds + \int_{\gamma_2} \frac{\log w_2(z(s))}{s-\xi_j(z)} ds \right) \right],
        \end{equation}
    for $j=0,1,2$. The constant $C$ can be taken so that $D_0 D_1 D_2 \equiv 1$.
This completes the construction of $D_0$, $D_1$ and $D_2$ and therefore of $N$.

\begin{example}
In the case where $h_1\equiv 1$, $h_2 \equiv 1$, we can evaluate
the Szeg\H{o} functions explicitly.
Indeed, we find for
\begin{equation} \label{w1w2special}
\begin{aligned}
    w_1(x) = w_1^{(\alpha,\beta)}(x) & = (x-a)^{\alpha} (-x)^{\beta}, &&  x \in (a,0), \\
    w_2(x) = w_2^{(\beta, \gamma)}(x) & = x^{\beta} (1-x)^{\gamma}, && x \in (0,1),
    \end{aligned}
    \end{equation}
    that
\begin{equation} \label{DfunctionsABC}
\begin{aligned}
    D_0(z)  & = C \left(\frac{\xi_0(z) - \xi_1(\infty)}{\xi_0(z) - \xi_1(a)} \right)^{\alpha}
        \left( \frac{\xi_0(z) - \xi_2(\infty)}{\xi_0(z) - \xi_2(1)} \right)^{\gamma}  \\
        & \qquad \times
        \left( \frac{(\xi_0(z) - \xi_1(\infty))(\xi_0(z) - \xi_2(\infty))}{\xi_0(z)^2} \right)^{\beta}, \\
    D_1(z)  & = C \left( (z-a)\frac{\xi_1(z) - \xi_1(\infty)}{\xi_1(z) - \xi_1(a)} \right)^{\alpha}
        \left( \frac{\xi_1(z) - \xi_2(\infty)}{\xi_1(z) - \xi_2(1)} \right)^{\gamma} \\
        & \qquad \times \left( (-z) \frac{(\xi_1(z) - \xi_1(\infty))(\xi_1(z) - \xi_2(\infty))}{\xi_1(z)^2} \right)^{\beta}, \\
    D_2(z) & = C\left(\frac{\xi_2(z) - \xi_1(\infty)}{\xi_2(z) - \xi_1(a)} \right)^{\alpha}
        \left( (1-z) \frac{\xi_2(z) - \xi_2(\infty)}{\xi_2(z) - \xi_2(1)} \right)^{\gamma} \\
        & \qquad \times
        \left( z \frac{(\xi_2(z) - \xi_1(\infty))(\xi_2(z) - \xi_2(\infty))}{\xi_2(z)^2} \right)^{\beta}.
    \end{aligned}
\end{equation}
with appropriate choice of branches for the exponents.
\end{example}

\subsubsection{Behavior of Szeg\H{o} functions near $0$}

From \eqref{polequation} and the choice of branches $\xi_0$, $\xi_1$, $\xi_2$,
we obtain
\begin{equation}
\begin{aligned}
    \xi_0(z) & = - C(a) \omega^{\mp} z^{1/3} + \mathcal{O}(z),  \\
  \xi_1(z) & = - C(a) z^{1/3} + \mathcal{O}(z), \\
  \xi_2(z) & = - C(a) \omega^{\pm} z^{1/3} + \mathcal{O}(z),
  \end{aligned} \qquad \textrm{for } \pm \im z > 0
  \end{equation}
with
\[  C(a) = \left(\frac{a-1}{4a}\right)^{1/3} > 0. \]

We use this in \eqref{DfunctionsABC} together with $\xi_1(a) = -1$, $\xi_2(1) = 1$, $\xi_1(\infty) = p_1(a)$,
$\xi_2(\infty) = p_2(a)$, to obtain the leading behavior of the Szeg\H{o} functions at $0$
for the case $h_1 \equiv 1$, $h_2 \equiv 1$. It follows from \eqref{DfunctionsABC} that
\begin{equation} \label{Dnear0noh}
\begin{aligned}
  D_0(z) & = C \left( - p_1(a)\right)^{\alpha} p_2(a)^{\gamma}
                    \left(\frac{-p_1(a) p_2(a)}{C(a)^2} \right)^{\beta} e^{\pm \beta \pi i /3} z^{-2 \beta/3}
                    (1 + \mathcal{O}(z^{1/3})), \\
    D_1(z) & = C \left(a p_1(a) \right)^{\alpha} p_2(a)^{\gamma}
                     \left(\frac{- p_1(a) p_2(a)}{C(a)^2}\right)^{\beta} z^{\beta/3} (1 + \mathcal{O}(z^{1/3})),
            \\
  D_2(z) & = C \left(-p_1(a) \right)^{\alpha} p_2(a)^{\gamma}
                    \left(\frac{-p_1(a) p_2(a)}{C(a)^2} \right)^{\beta} e^{\mp \beta \pi i/3} z^{\beta/3}
                    (1 + \mathcal{O}(z^{1/3}))
\end{aligned}
\end{equation}
as $z \to 0$ with $\pm \im z > 0$.

The effect of the analytic factors $h_1$ and $h_2$ comes in the form of contour integrals
\[ \frac{1}{2\pi i} \oint_{\gamma_j}  \frac{\log h_j(z(s))}{s-\xi} ds, \qquad j = 1,2, \qquad \xi \in \mathbb C \setminus \gamma_j, \]
see \eqref{expressiond}.
Because of analyticity we can deform $\gamma_j$ to a contour $\gamma_j^{\epsilon}$ in the region $\widetilde{\mathcal R}_j$,
which leaves the integral unchanged if $\xi \in \mathbb C \setminus \widetilde{\mathcal R}_j$
and picks up a residue contribution of $\log(h_j(z(\xi))$ in case $\xi \in \widetilde{\mathcal R}_j$
is close to $\gamma_j$, in particular if $\xi$ is close to $0$.
In this way we find the following behavior as $\xi \to 0$,
\begin{align} \label{hjintegral}
         \frac{1}{2\pi i} \oint_{\gamma_j}  \frac{\log h_j(z(s))}{s-\xi} ds
            =
            \begin{cases}
            c_j + \mathcal O(\xi), &  \xi \in \mathbb C \setminus \widetilde{\mathcal R}_j \\
            c_j - \log h_j(0) + \mathcal O(\xi)), &  \xi \in \widetilde{\mathcal R}_j
            \end{cases}
\end{align}
where
\[ c_j = \frac{1}{2\pi i} \oint_{\gamma_j^{\epsilon}}  \frac{\log h_j(z(s))}{s} ds.  \]
By the change of variables $z(s) = x$, $s = \xi_j(z)$, we turn this integral into an integral on a counter
that circles around $\Delta_j$ in counterclockwise direction. Bringing this integral to $\Delta_j$ we obtain
\begin{equation} \label{formulacj}
    c_j =  \frac{1}{2\pi i} \int_{\Delta_j} \log h_j(x)
    \left( \left(\frac{\zeta_j'}{\zeta_j} \right)_- - \left(\frac{\zeta_j'}{\zeta_j} \right)_+ \right)(x) dx.
    \end{equation}

Combining \eqref{Dnear0noh}, \eqref{hjintegral}, \eqref{formulacj} we find that
for general analytic factors we have
\begin{equation} \label{Dnear0}
\begin{aligned}
  D_0(z) & = C e^{c_1 + c_2} \left( - p_1(a)\right)^{\alpha} p_2(a)^{\gamma}
                    \left(\frac{-p_1(a) p_2(a)}{C(a)^2} \right)^{\beta} e^{\pm \beta \pi i /3} z^{-2 \beta/3}
                    (1 + \mathcal{O}(z^{1/3})), \\
    D_1(z) & = C h_1(0) e^{c_1 + c_2} \left(a p_1(a) \right)^{\alpha} p_2(a)^{\gamma}
                     \left(\frac{- p_1(a) p_2(a)}{C(a)^2}\right)^{\beta} z^{\beta/3} (1 + \mathcal{O}(z^{1/3})),
            \\
  D_2(z) & = C h_2(0) e^{c_1 + c_2} \left(-p_1(a) \right)^{\alpha} p_2(a)^{\gamma}
                    \left(\frac{-p_1(a) p_2(a)}{C(a)^2} \right)^{\beta} e^{\mp \beta \pi i/3} z^{\beta/3}
                    (1 + \mathcal{O}(z^{1/3}))
\end{aligned}
\end{equation}
as $z \to 0$ with $\pm \im z > 0$, with $c_1$ and $c_2$ given by \eqref{formulacj}.

For $a=-1$ we have $-p_1(a) = p_2(a) = \frac{1}{\sqrt{3}}$ and $C(a) = 2^{-1/3}$.
Since all quantities depend analytically on $a$, we find from \eqref{Dnear0}
\begin{equation} \label{Dnear0aisminus1}
\begin{aligned}
  D_0(z;a) & = C e^{c_1 + c_2} \frac{2^{\frac{2}{3} \beta}}{3^{\frac{1}{2}(\alpha+\gamma) + \beta}}
    e^{\pm \beta \pi i /3} z^{-2 \beta/3}  (1 + \mathcal{O}(z^{1/3}) + \mathcal O(a+1)), \\
    D_1(z;a) & = C h_1(0) e^{c_1 + c_2} \frac{2^{\frac{2}{3} \beta}}{3^{\frac{1}{2}(\alpha+\gamma) + \beta}}
        z^{\beta/3} (1 + \mathcal{O}(z^{1/3}) + \mathcal O(a+1)),
            \\
  D_2(z;a) & = C h_2(0) e^{c_1 + c_2} \frac{2^{\frac{2}{3} \beta}}{3^{\frac{1}{2}(\alpha+\gamma) + \beta}}
                    e^{\mp \beta \pi i/3} z^{\beta/3}
                    (1 + \mathcal{O}(z^{1/3}) + \mathcal O(a+1))
\end{aligned}
\end{equation}
as $z \to 0$ and $a \to -1$.

\subsubsection{Behavior of $N$ around the endpoints}

For the further analysis of the Riemann-Hilbert problem for $S$ we need to know the behavior
of $N$ around the endpoints $a,0$ and $1$.  To that end we also need to know
how the Szeg\H{o}-functions $D_0$, $D_1$ and $D_2$
behave around these points.

The functions $D_{0,1,2}(z;w_1,w_2)$ are multiplicative in $w_1$ and $w_2$.
 Then we can split off the analytical factors $h_{1}, h_{2}$ from the weights $w_{1}, w_{2}$
 and write
 \[ D_j(z; w_1, w_2) = D_j(z; w_1^{(\alpha, \beta)}, w_2^{(\beta, \gamma)}) D_j(z;h_1, h_2) \]
 where the functions $D_j(z; w_1^{(\alpha, \beta)}, w_2^{(\beta, \gamma)})$
  associated with the weights \eqref{w1w2special} are given in \eqref{DfunctionsABC} above.

The explicit expressions allow  us to prove the following proposition:

\begin{proposition}
Around the branch points $N$ has the following behavior:
\begin{equation}   \label{behaviorN}
  N(z) =\begin{cases}
        \mathcal{O}\begin{pmatrix}
        (z-a)^{-\frac{1+ 2\alpha }{4}} & (z-a)^{\frac{2\alpha - 1}{4}} & 1 \\
        (z-a)^{-\frac{1+ 2\alpha }{4}} & (z-a)^{\frac{2\alpha - 1}{4}} & 1 \\
        (z-a)^{-\frac{1+ 2\alpha }{4}} & (z-a)^{\frac{2\alpha - 1}{4}} & 1
        \end{pmatrix} &  \textrm{ as } z \to a,  \\
    \mathcal{O}\begin{pmatrix}  (z-1)^{-\frac{1 + 2\gamma}{4}}  & 1 & (z-1)^{\frac{2\gamma -1}{4}} \\
    (z-1)^{-\frac{1 + 2\gamma}{4}}  & 1 & (z-1)^{\frac{2\gamma -1}{4}} \\
    (z-1)^{-\frac{1 + 2\gamma}{4}}  & 1 & (z-1)^{\frac{2\gamma -1}{4}}
    \end{pmatrix} & \textrm{ as } z \to 1, \\
    \mathcal{O}\begin{pmatrix} z^{-\frac{2\beta + 1}{3}} & z^{\frac{\beta - 1}{3}} & z^{\frac{\beta - 1}{3}} \\
    z^{-\frac{2\beta + 1}{3}} & z^{\frac{\beta - 1}{3}} & z^{\frac{\beta - 1}{3}} \\
    z^{-\frac{2\beta + 1}{3}} & z^{\frac{\beta - 1}{3}} & z^{\frac{\beta - 1}{3}}
    \end{pmatrix} & \textrm{ as } z \to 0.
    \end{cases} \end{equation}
\label{propbehaviorN}
\end{proposition}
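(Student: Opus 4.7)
The plan is to derive the claim for each endpoint separately by inserting the explicit factorization $N(z) = D_\infty^{-1}\widetilde N(z) D(z)$ into the behavior analysis and matching powers. The key point is that each endpoint $a,0,1$ is a branch point of the Riemann surface $\mathcal R$: two sheets meet at $a$ and at $1$ (square-root branch), while three sheets meet at $0$ (cube-root branch). So I would split the calculation into two factors: the geometric factor $\widetilde N$ depending only on the mapping functions $\xi_0,\xi_1,\xi_2$, and the weight-dependent Szeg\H{o} factors $D_0,D_1,D_2$.

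For $\widetilde N$ I would use \eqref{definitionnk}. Near $a$, since $\xi_j(a)=-1$ for $j=0,1$ while $\xi_2(a)$ is interior to $\widetilde{\mathcal R}_2$, one has $\xi_0(z)+1$ and $\xi_1(z)+1$ of order $(z-a)^{1/2}$, so the factor $(\xi^2-1)^{-1/2}$ in $\widetilde N_j(\xi_k(z))$ contributes $(z-a)^{-1/4}$ for $k=0,1$, while $\widetilde N_j(\xi_2(z))$ stays bounded. A symmetric analysis at $z=1$ gives the analogous $(z-1)^{-1/4}$ bound for columns indexed by $\xi_0$ and $\xi_2$ and boundedness for the $\xi_1$ column. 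Near $z=0$, all three $\xi_j(z)$ tend to $0$ like $z^{1/3}$, so the factor $1/\xi$ in \eqref{definitionnk} produces $z^{-1/3}$ in every entry.

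For the Szeg\H{o} factors, I would first do the reduction $D_j(z;w_1,w_2)=D_j(z;w_1^{(\alpha,\beta)},w_2^{(\beta,\gamma)})\cdot D_j(z;h_1,h_2)$. The explicit formulas \eqref{DfunctionsABC} immediately give the leading orders: $(z-a)^{\mp\alpha/2}$ for $D_0,D_1$ near $a$ (combining $(\xi_{0,1}(z)+1)^{\mp\alpha}\sim(z-a)^{\mp\alpha/4}$ with the extra prefactor $(z-a)^\alpha$ present in $D_1$), with $D_2$ analytic and non-vanishing there; the mirror statement near $z=1$; and the expansions \eqref{Dnear0noh} near $z=0$, which give $D_0\sim z^{-2\beta/3}$, $D_{1,2}\sim z^{\beta/3}$. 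Finally, one must check that the factor $D_j(z;h_1,h_2)$ contributes only an analytic, non-vanishing prefactor near each endpoint. This follows because $h_1,h_2$ are analytic and non-zero in neighborhoods of $\Delta_1,\Delta_2$, so the contour-integral representation \eqref{expressiond} applied to $\log h_j$ produces a function that is analytic at each endpoint (up to the deformation argument behind \eqref{hjintegral}).

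Combining the two factors column by column yields exactly the matrix of orders stated in \eqref{behaviorN}: at $z\to a$ one gets $(z-a)^{-1/4}(z-a)^{-\alpha/2}=(z-a)^{-(1+2\alpha)/4}$ in the first column, $(z-a)^{-1/4}(z-a)^{\alpha/2}=(z-a)^{(2\alpha-1)/4}$ in the second, and $\mathcal O(1)$ in the third; similarly at $z\to1$; and at $z\to0$ one gets $z^{-1/3}z^{-2\beta/3}=z^{-(2\beta+1)/3}$ in the first column and $z^{-1/3}z^{\beta/3}=z^{(\beta-1)/3}$ in the other two. The main obstacle I expect is bookkeeping of branches and signs in the Szeg\H{o} factor exponents—in particular, making sure that near $a$ the factor $(z-a)^\alpha$ that appears inside $D_1$ via \eqref{DfunctionsABC} really cancels the singular square root from $(\xi_1(z)-\xi_1(a))^{-\alpha}$ to leave $(z-a)^{\alpha/2}$ rather than a more singular expression, and the analogous check at $z=1$ for $D_2$.
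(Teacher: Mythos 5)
Your approach is exactly the paper's: factor $N = D_\infty^{-1}\widetilde N D$, read off the $\widetilde N$ singularities from \eqref{definitionnk} and the square-/cube-root behavior of the $\xi_k$ at the branch points, split the Szeg\H{o} factors via multiplicativity into the explicit pure-Jacobi part \eqref{DfunctionsABC} and the bounded $h$-part, and combine column by column. One slip in the intermediate parenthetical near $z=a$: since $\xi_{0,1}(z)+1 \sim (z-a)^{1/2}$ one has $(\xi_{0,1}(z)+1)^{\mp\alpha}\sim(z-a)^{\mp\alpha/2}$, not $(z-a)^{\mp\alpha/4}$---your stated conclusions $D_0 \sim (z-a)^{-\alpha/2}$ and $D_1 \sim (z-a)^{\alpha/2}$ are correct nonetheless.
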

\begin{proof}
From the expressions \eqref{definitionn} and \eqref{definitionnk}
together with the behavior of the mapping functions $\xi_0, \xi_1, \xi_2$ around the branch
points we find \eqref{behaviorN} for the case $w_1 \equiv 1$, $w_2 \equiv 1$,
(in which case of course $\alpha = \beta = \gamma = 0$).

For the general case we first note that the functions
$D_j(z; h_1, h_2)$ remain bounded and bounded away from $0$ for analytic
and non-zero $h_1$ and $h_2$.
For the Szeg\H{o} functions \eqref{DfunctionsABC} associated with the pure
Jacobi weights we have
\begin{equation}
    \begin{aligned}
    \begin{pmatrix} D_0(z; w_1^{(\alpha,\beta)}, w_2^{(\beta, \gamma)}) &
     D_1(z; w_1^{(\alpha,\beta)}, w_2^{(\beta, \gamma)}) &
      D_2(z; w_1^{(\alpha,\beta)}, w_2^{(\beta, \gamma)}) \end{pmatrix} \\
    = \begin{cases} \mathcal{O} \begin{pmatrix} (z-a)^{-\alpha/2} & (z-a)^{\alpha/2} & 1 \end{pmatrix}
        & \textrm{ as } z \to a,  \\
      \mathcal{O} \begin{pmatrix} (z-1)^{-\gamma/2} & 1 & (z-1)^{\gamma/2} \end{pmatrix}
        & \textrm{ as } z \to 1,  \\
      \mathcal{O} \begin{pmatrix} z^{-2\beta/3} & z^{\beta/3} & z^{\beta/3} \end{pmatrix}
        & \textrm{ as } z \to 0,
        \end{cases}
        \end{aligned}
        \end{equation}
where we use that
\[ \xi_j(z) = \mathcal O(z^{1/3})  \textrm{ as } z \to 0, \qquad \text{for } j=0,1,2, \]
\[ \xi_j(z) = \mathcal O((z-a)^{1/2}) \textrm{ as } z\to a, \qquad \text{for } j=0,1, \]
\[ \xi_j(z) = \mathcal O((z-1)^{1/2}) \textrm{ as } z\to 1, \qquad \text{for } j=0,2, \]
while $\xi_1(z)$ is analytic around $z=1$ and $\xi_2(z)$ is analytic around $z=a$.
\end{proof}

\subsubsection{Symmetries in the outer parametrix}

In this subsection we give two symmetries in the functions $\widetilde{N}$
that will be useful later on.  Recall that $\widetilde{N}$ is the outer
parametrix in the case that $w_1$ and $w_2$ are identically one, see subsection \eqref{sectionSpecialN}.
A first symmetry deals with the inverse of $\widetilde{N}$.

\begin{proposition}
For every $a < 0$ we have
\begin{equation} \label{Ntildesymmetry}
    \widetilde{N}^{-1}(z) = \widetilde{N}^{\transpose}(z),
    \qquad z \in \mathbb C \setminus [a,1]. \end{equation}
\end{proposition}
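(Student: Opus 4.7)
The plan is to prove the stronger statement that the matrix $M(z) := \widetilde{N}(z)\widetilde{N}(z)^{\transpose}$ equals the identity identically, via a Liouville-type argument. This immediately gives the desired inversion formula.

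The first step is to verify that $M$ has no jumps across $(a,0) \cup (0,1)$. Since $\widetilde{N}$ solves the RH problem from Subsection \ref{sectionSpecialN}, the jump matrices from \eqref{jumpsn} with $w_1 \equiv w_2 \equiv 1$ reduce to
\[
 J_1 = \begin{pmatrix} 0 & 1 & 0 \\ -1 & 0 & 0 \\ 0 & 0 & 1 \end{pmatrix} \quad \text{on } (a,0),
 \qquad
 J_2 = \begin{pmatrix} 0 & 0 & 1 \\ 0 & 1 & 0 \\ -1 & 0 & 0 \end{pmatrix} \quad \text{on } (0,1).
\]
A direct computation shows that $J_k J_k^{\transpose} = I$ for $k=1,2$, so
\[
 M_+ = \widetilde{N}_- J_k J_k^{\transpose} \widetilde{N}_-^{\transpose} = \widetilde{N}_- \widetilde{N}_-^{\transpose} = M_-
\]
on each interval. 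Hence $M$ extends holomorphically across $[a,1] \setminus \{a,0,1\}$.

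The second step is to show the three remaining isolated singularities are removable. Specializing Proposition \ref{propbehaviorN} to $\alpha = \beta = \gamma = 0$, the entries of $\widetilde{N}$ are $\mathcal O((z-a)^{-1/4})$ as $z \to a$, $\mathcal O((z-1)^{-1/4})$ as $z \to 1$, and $\mathcal O(z^{-1/3})$ as $z \to 0$. Consequently the entries of $M = \widetilde{N}\widetilde{N}^{\transpose}$ are $\mathcal O(|z-a|^{-1/2})$, $\mathcal O(|z-1|^{-1/2})$ and $\mathcal O(|z|^{-2/3})$ respectively near those points. Since each of these growth rates is strictly weaker than $|z-z_0|^{-1}$, Riemann's removable singularity theorem gives that $M$ extends to an entire matrix-valued function.

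Finally, from the RH normalization $\widetilde{N}(z) = I + \mathcal O(z^{-1})$ at infinity we read off $M(z) \to I$. By Liouville's theorem $M \equiv I$, which is exactly \eqref{Ntildesymmetry}. None of the steps should be technically difficult; the only mild subtlety is to make sure the endpoint exponents are genuinely less negative than $-1$ in absolute value so that Riemann's theorem applies, which the computations in Proposition \ref{propbehaviorN} already secure.
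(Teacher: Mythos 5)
Your proof is correct and follows essentially the same Liouville-type argument as the paper: define $X = \widetilde N \widetilde N^{\transpose}$, use the orthogonality $J J^{\transpose} = I$ of the jump matrices to kill the jumps across $(a,0)\cup(0,1)$, deduce removability of the singularities at $a,0,1$ from the endpoint growth estimates, and apply Liouville together with the normalization at infinity. You spell out the exponent bookkeeping in the removability step a bit more explicitly than the paper does, but the method and key observations are identical.
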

\begin{proof}
Define  $X$ by
\begin{equation}
X(z) := \widetilde{N}(z) \widetilde{N}^{\transpose}(z) \quad z \in \mathbb{C} \setminus [a,1]
\end{equation}
Using the fact $J_{\widetilde{N}} = \left(J_{\widetilde{N}}\right)^{-\transpose}$ we find that on $(a,0)$ and $(0,1)$,
\begin{align}
X_{-}^{-1} X_{+} & = \widetilde{N}_{-}^{-\transpose} \widetilde{N}_{-}^{-1} \widetilde{N}_{+} \widetilde{N}_{+}^{\transpose}
      = \widetilde{N}_{-}^{-\transpose} J_{\widetilde{N}} \widetilde{N}_{+}^{\transpose} \nonumber \\
& = \widetilde{N}_{-}^{-\transpose} \left( J_{\widetilde{N}} \right)^{-\transpose} \widetilde{N}_{+}^{\transpose}
    = \left( \widetilde{N}_{+} J_{\widetilde{N}}^{-1} \widetilde{N}_{-}^{-1} \right)^{\transpose} = I^{\transpose} = I
\end{align}
Since $\widetilde{N}(z)$ tends to $I$ as $z \to \infty$ we have
$X(z) = I + \mathcal{O}\left( \frac{1}{z} \right)$ as $z \to \infty$.

By the behavior of $\widetilde{N}$ near the branch points  (see \eqref{behaviorN}
for the case $\alpha = \beta = \gamma =0$) we obtain that $X$ has no poles in $a$, $0$ or $1$,
and we conclude by  Liouville's theorem that $X(z) = I$ everywhere and \eqref{Ntildesymmetry}
follows.
\end{proof}

As a corollary of this proposition and the expression for the $D_j$ in \eqref{expressiond} we
then  also find the behavior of $N^{-1}$ around the branch points, since from \eqref{ddinfty}
and \eqref{Ntildesymmetry}
\begin{equation}
    N^{-1}(z) = D(z)^{-1} \widetilde{N}(z)^{\transpose} D_{\infty}.
\end{equation}

We then obtain
\begin{equation}   \label{behaviorNinv}
  N^{-1}(z) =\begin{cases}
        \mathcal{O}\begin{pmatrix}
        (z-a)^{\frac{2\alpha-1}{4}} & (z-a)^{\frac{2\alpha -1}{4}} & (z-a)^{\frac{2\alpha -1}{4}}  \\
        (z-a)^{-\frac{2\alpha +1}{4}} & (z-a)^{-\frac{2\alpha +1}{4}} & (z-a)^{-\frac{2\alpha +1}{4}} \\
        1 & 1 & 1 \end{pmatrix} & \textrm{ as } z \to a , \\
        \mathcal{O}\begin{pmatrix}
    (z-1)^{\frac{2\gamma -1}{4}} & (z-1)^{\frac{2\gamma -1}{4}} & (z-1)^{\frac{2\gamma -1}{4}}  \\
    1 & 1 & 1 \\
    (z-1)^{-\frac{2\gamma +1}{4}} & (z-1)^{-\frac{2\gamma +1}{4}} & (z-1)^{-\frac{2\gamma +1}{4}}
    \end{pmatrix} & \textrm{ as } z \to 1, \\
    \mathcal{O}\begin{pmatrix}
    z^{\frac{2\beta -1}{3}} & z^{\frac{2\beta -1}{3}} & z^{\frac{2\beta -1}{3}} \\
    z^{-\frac{\beta+1}{3}} & z^{-\frac{\beta+1}{3}} & z^{-\frac{\beta+1}{3}} \\
    z^{-\frac{\beta+1}{3}} & z^{-\frac{\beta+1}{3}} & z^{-\frac{\beta+1}{3}}
    \end{pmatrix} & \textrm{ as } z \to 0.
    \end{cases} \end{equation}

A second symmetry relates the functions $\widetilde{N}$ for different values of $a$ to each
other. We use $\widetilde{N}(\cdot;a)$ to denote the dependence on $a < 0$.
 Let $\theta_{a}$ be the M\"{o}bius transformation leaving $0$ and $1$ invariant and mapping $-1$ to $a$, i.e.,
\begin{equation}
    \theta_{a}(z) = \frac{2az}{(a+1)z + a-1}, \qquad \theta_{a}^{-1}(z) = \frac{ (1-a) z }{(a+1)z - 2a}
\end{equation}

\begin{proposition} \label{propna}
For $a < 0, a \neq -1$ and any $z \in \mathbb{C} \setminus [a,1]$ we have
\begin{equation} \label{hom1}
\widetilde{N}(z;a) = \widetilde{N}\left( \frac{1-a}{1+a}; -1 \right)^{-1} \widetilde{N}\left(\theta_{a}^{-1}(z);-1\right)
\end{equation}
\end{proposition}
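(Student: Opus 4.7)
The plan is to define
\[ X(z) := \widetilde N\!\left(\tfrac{1-a}{1+a};-1\right)^{\!-1} \widetilde N(\theta_a^{-1}(z);-1) \]
and show that $X$ solves exactly the Riemann-Hilbert problem of subsection \ref{sectionouter} for $\widetilde N(\cdot;a)$, i.e.\ with trivial weights $w_1\equiv w_2\equiv 1$ and the branch points at $a,0,1$. Uniqueness of that Riemann-Hilbert problem (standard, by the usual Liouville-type argument using the mildness of the endpoint singularities recorded in Proposition \ref{propbehaviorN}) then forces $X\equiv \widetilde N(\cdot;a)$, which is precisely \eqref{hom1}.

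First I would check analyticity. The M\"obius map $\theta_a^{-1}$ sends the marked points $a,0,1$ to $-1,0,1$; a one-line calculation shows that its pole $z=2a/(a+1)$ never lies in $[a,1]$ for $a<0$ (it is either $<a$ if $-1<a<0$, or $>1$ if $a<-1$), so $\theta_a^{-1}$ is a biholomorphism of $\overline{\mathbb C}\setminus[a,1]$ onto $\overline{\mathbb C}\setminus[-1,1]$. Composing with $\widetilde N(\cdot;-1)$, which is analytic on $\overline{\mathbb C}\setminus[-1,1]$ with $\widetilde N(\infty;-1)=I$, gives a matrix analytic on $\mathbb C\setminus[a,1]$ (the only candidate singularity $z=2a/(a+1)$ is sent to $\infty$, where $\widetilde N(\cdot;-1)$ is regular).

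Next, the jumps. Since
\[ \theta_a'(z)=\frac{2a(a-1)}{[(a+1)z+a-1]^2}>0 \quad\text{for }z\in\mathbb R\setminus\{2a/(a+1)\}, \]
the map $\theta_a^{-1}$ is orientation-preserving on $\mathbb R$, so it carries $(a,0)$ onto $(-1,0)$ and $(0,1)$ onto $(0,1)$ with matching orientation and matching $\pm$ sides. Left multiplication by the constant matrix $\widetilde N(\tfrac{1-a}{1+a};-1)^{-1}$ does not affect jumps, hence
\[ X_-(x)^{-1}X_+(x)=J_{\widetilde N}(\theta_a^{-1}(x);-1), \]
and for trivial weights the two jump matrices required by \eqref{jumpsn} on $(-1,0)$ and $(0,1)$ coincide with those required of $\widetilde N(\cdot;a)$ on $(a,0)$ and $(0,1)$. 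The normalization $X(\infty)=I$ is immediate from $\theta_a^{-1}(\infty)=\tfrac{1-a}{1+a}$. For the behavior at the branch points I would use that $\theta_a^{-1}$ is locally conformal and non-degenerate at $a,0,1$, so $\theta_a^{-1}(z)-(-1)\asymp z-a$, $\theta_a^{-1}(z)\asymp z$, $\theta_a^{-1}(z)-1\asymp z-1$; Proposition \ref{propbehaviorN} applied with $\alpha=\beta=\gamma=0$ then transfers directly, giving $X$ the same weak algebraic growth at $a,0,1$ as $\widetilde N(\cdot;a)$.

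There is no serious obstacle: everything reduces to the two elementary M\"obius facts $\theta_a'>0$ on $\mathbb R$ and $2a/(a+1)\notin[a,1]$, plus the $\alpha=\beta=\gamma=0$ case of Proposition \ref{propbehaviorN}. The only point I would be careful to mention is why the hypothesis $a\neq -1$ is needed at all: for $a=-1$ one has $\theta_{-1}=\mathrm{id}$ and $(1-a)/(1+a)=\infty$, so the claimed identity degenerates to the tautology $\widetilde N(z;-1)=I^{-1}\widetilde N(z;-1)$; for $a\neq -1$ the M\"obius map is honest and the argument above applies.
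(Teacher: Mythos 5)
Your proposal is correct and is essentially the paper's intended argument: the paper only says the proof is ``similar to the previous proposition,'' meaning a Liouville-type argument using the M\"obius change of variables, the invariance of the constant jump matrices, the normalization at $\infty$, and the mild endpoint growth from Proposition \ref{propbehaviorN} with $\alpha=\beta=\gamma=0$. Your elementary checks (that $\theta_a'>0$ on $\mathbb R$, that the pole $2a/(a+1)$ lies outside $[a,1]$, and that $\theta_a^{-1}$ fixes $0,1$ and sends $a\mapsto -1$, $\infty\mapsto(1-a)/(1+a)$) are exactly what is needed and are all accurate.
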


\begin{proof}
The proof is similar to the proof of the previous proposition. We do not give details.
\end{proof}

\subsubsection{Behavior at $z=0$}
In the next section we need the leading term for $\widetilde{N}(z;a)$ as $z \to 0$.
\begin{lemma}
As $z \to 0$ we have
\begin{equation}
    \widetilde{N}(z;a) = \widetilde{N}_0(a) z^{-\frac{1}{3}} + \mathcal{O}(1)
            \label{leadingtermsn1} \end{equation}
with
\begin{multline}
    \widetilde{N}_0(a) = \frac{1}{3 2^{1/6}}
        \widetilde{N} \left( \frac{1-a}{1+a};-1\right)^{-1} \begin{pmatrix} \sqrt{2}i \\ 1 \\ -1 \end{pmatrix}  \\
        \times
    \begin{cases}
         \begin{pmatrix} -\omega & 1 & \omega^{2} \end{pmatrix} &  \textrm{ for } \im z > 0, \\
       \begin{pmatrix} \omega^{2} & 1 & \omega \end{pmatrix} & \textrm{ for } \im z < 0.
       \end{cases} \label{leadingtermsn2}
\end{multline}
\label{lemmaleadingtermsn}
\end{lemma}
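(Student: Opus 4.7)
The plan is to use Proposition \ref{propna} to reduce the analysis to the symmetric case $a = -1$. Since that proposition gives
\[
 \widetilde{N}(z;a) \;=\; \widetilde{N}\!\left(\tfrac{1-a}{1+a};\,-1\right)^{-1} \widetilde{N}\!\left(\theta_a^{-1}(z);\,-1\right),
\]
and the left factor is $z$-independent, the leading singular term of $\widetilde{N}(z;a)$ at $z=0$ is determined by the leading singular term of $\widetilde{N}(w;-1)$ at $w=0$, composed with the M\"obius transformation $w = \theta_a^{-1}(z) = \frac{a-1}{2a}\,z + \mathcal{O}(z^2)$. In particular the factor $\bigl(\frac{a-1}{2a}\bigr)^{-1/3}$ is trivial at $a=-1$ and smooth nearby, so it will combine with the matrix $M_0$ (built at $a=-1$) to produce the stated $a$-dependence through the left factor only.

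Next I would establish that, as $w \to 0$ with $\pm\,\im w > 0$, one has $\widetilde{N}(w;-1) = M_0^{\pm}\,w^{-1/3} + \mathcal{O}(1)$ for explicit rank-one matrices $M_0^{\pm}$. Specializing \eqref{definitionnk} to $a=-1$ (so that $p_0 = \infty$, $p_1 = -1/\sqrt{3}$, $p_2 = 1/\sqrt{3}$), each scalar function $\widetilde{N}_j$ extends with a simple pole at $\xi = 0$ whose residue $r_j := \lim_{\xi\to 0} \xi\widetilde{N}_j(\xi)$ may be read off:
\[
r_0 = \frac{p_1 p_2}{(-1)^{1/2}},\qquad
r_j = -\frac{p_1 p_2\,(p_j^{2}-1)^{1/2}}{(p_j - p_{3-j})\,(-1)^{1/2}}\quad (j = 1,2),
\]
with all square roots interpreted by the branch convention from \eqref{definitionnk}. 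On the other hand, solving the algebraic equation \eqref{polequation} at $a=-1$ perturbatively gives
$\xi_k(w) = -2^{-1/3}\,\zeta_k^{\pm}\,w^{1/3} + \mathcal{O}(w)$ where $(\zeta_0,\zeta_1,\zeta_2) = (\omega^2, 1, \omega)$ for $\im w > 0$ and the conjugate triple for $\im w < 0$. Multiplying, each entry satisfies
\[
 \widetilde{N}_j(\xi_k(w)) = \frac{r_j}{\xi_k(w)} + \mathcal{O}(1) = -\frac{2^{1/3} r_j}{\zeta_k^{\pm}}\,w^{-1/3} + \mathcal{O}(1),
\]
so the leading coefficient is the outer product of the column $(r_0,r_1,r_2)^{\transpose}$ with the row $(1/\zeta_0^{\pm},1/\zeta_1^{\pm},1/\zeta_2^{\pm})$, scaled by $-2^{1/3}$.

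Finally, after carrying out the branch computation one identifies the column vector, up to an overall real normalization absorbed into $1/(3\cdot 2^{1/6})$, with $(\sqrt{2}\,i,\,1,\,-1)^{\transpose}$, and the row vector with $(-\omega,\,1,\,\omega^2)$ for $\im z > 0$ and $(\omega^2,\,1,\,\omega)$ for $\im z < 0$. Substituting back into the factorization from Proposition \ref{propna} then yields the formulas \eqref{leadingtermsn1}--\eqref{leadingtermsn2} for $\widetilde{N}_0(a)$. The principal obstacle is the careful tracking of the branches of $(\xi^2-1)^{1/2}$ and $(p_j^2-1)^{1/2}$ as $\xi \to 0$ from each of the three regions $\widetilde{\mathcal{R}}_0, \widetilde{\mathcal{R}}_1, \widetilde{\mathcal{R}}_2$ meeting at the pinch point $\xi=0$, together with identifying into which of these regions each $\xi_k(w)$ asymptotically limits; this is what determines the precise signs and phases entering $(r_0,r_1,r_2)$ and the row vectors.
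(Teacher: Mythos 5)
Your approach is essentially the same as the paper's: reduce via Proposition~\ref{propna} to the symmetric case $a=-1$, then extract the leading $w^{-1/3}$ behavior of $\widetilde{N}(w;-1)$ from the explicit formula~\eqref{definitionnk}, identify the rank-one structure of the leading coefficient, and assemble the answer. The paper implements the last step via a three-matrix factorization (diagonal $\times$ Vandermonde-type $\times$ diagonal) with the $\xi_k$-dependent singular factors grouped in the right diagonal, whereas you formulate it as an outer product of a residue column $(r_0,r_1,r_2)^{\transpose}$ with the row of reciprocal tangent directions; these are the same computation organized slightly differently. Like the paper, you defer the branch bookkeeping as ``the principal obstacle'' rather than carrying it out, so your proposal and the paper's proof are incomplete at the same level.

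One remark is worth making. You correctly compute $\theta_a^{-1}(z) = \frac{a-1}{2a}\,z + \mathcal{O}(z^2)$, so composing with $\widetilde{N}(\cdot;-1)$ produces an extra scalar factor $\bigl(\tfrac{a-1}{2a}\bigr)^{-1/3}$ in front of $z^{-1/3}$. This factor is not absorbed into the matrix $\widetilde{N}\!\bigl(\tfrac{1-a}{1+a};-1\bigr)^{-1}$ as you suggest --- a scalar cannot combine with a $3\times3$ matrix to ``produce the stated $a$-dependence through the left factor only.'' In fact the paper's proof uses the simplification $\theta_a^{-1}(z)=z+\mathcal{O}(z^2)$, which is exact only at $a=-1$, so the displayed $\widetilde{N}_0(a)$ appears to be missing this scalar factor for $a\neq-1$. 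The omission is harmless downstream because the lemma is applied only with $a=a_n=-1+\mathcal{O}(n^{-1/2})$, where $\bigl(\tfrac{a-1}{2a}\bigr)^{-1/3}=1+\mathcal{O}(a+1)$ is swallowed by the same $\mathcal{O}(a+1)$ error already attributed to $\widetilde{N}\!\bigl(\tfrac{1-a}{1+a};-1\bigr)^{-1}$, but you should state this explicitly rather than hand-wave it away.

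One further caution on your residue calculation: the branch of $(\xi^2-1)^{1/2}$ near $\xi=0$ depends on which of the regions $\widetilde{\mathcal{R}}_0, \widetilde{\mathcal{R}}_1, \widetilde{\mathcal{R}}_2$ the argument $\xi_k(w)$ lies in, i.e.\ on the column index $k$, not on the row index $j$. So the quantity $r_j = \lim_{\xi\to 0}\xi\widetilde{N}_j(\xi)$ is not well-defined independently of the approach region; what you really have is $r_j^{(k)} = r_j\cdot s_k$ where the phase $s_k$ is column-dependent. This is exactly why the paper's grouping of $1/(\xi_k(\xi_k^2-1)^{1/2})$ into a right diagonal factor is the cleaner bookkeeping --- the column-dependent phase lands naturally in the row vector and the rank-one structure persists. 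Your plan still works, but the sentence identifying $(r_0,r_1,r_2)^{\transpose}$ as ``the column vector'' should be qualified so the factorization of the phase is explicit.
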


\begin{proof}
By Proposition \ref{propna} and the fact that $\theta_{a}^{-1}(z) = z + \mathcal{O}(z^{2})$
it suffices to compute the
leading term of $\widetilde{N}(z;-1)$ as $z \to 0$.  We use the factorization (where we suppress the argument $z$ on the right hand side)
\begin{multline}
    \widetilde{N}(z;-1)  = \diag \begin{pmatrix} 1 & \frac{p_1(p_1^2-1)^{1/2}}{p_1-p_2} & \frac{p_2 (p_2^2-1)^{1/2}}{p_2-p_1} \end{pmatrix} \\
    \times
     \begin{pmatrix} (\xi_{0}- p_{1})(\xi_{0}- p_{2}) & (\xi_{1}- p_{1})(\xi_{1}- p_{2}) & (\xi_{2}- p_{1})(\xi_{2}- p_{2}) \\
     \xi_{0} - p_{2} & \xi_{1} - p_{2} & \xi_{2} - p_{2} \\ \xi_{0} - p_{1} & \xi_{1} - p_{1} & \xi_{2} - p_{1} \end{pmatrix}
     \\
   \times \diag \begin{pmatrix} \frac{1}{\xi_0 (\xi^2_{0}-1)^{\frac{1}{2}}} & \frac{1}{\xi_1 (\xi^2_1 -1)^{\frac{1}{2}}} &
    \frac{1}{\xi_2 (\xi_{2}^2-1)^{\frac{1}{2}}} \end{pmatrix},
\end{multline}
A careful analysis of all the functions and constants involved then shows that for $\im z > 0$
\begin{equation}
    \widetilde{N}(z;-1) = \frac{1}{3 2^{1/6}} z^{-\frac{1}{3}} \begin{pmatrix} \sqrt{2}i \\ 1 \\ -1 \end{pmatrix}
    \begin{pmatrix} -\omega & 1 & \omega^{2} \end{pmatrix}  + \mathcal{O}(1) \textrm{ as } z \to 0, \end{equation}
    and for $\im z < 0$
\begin{equation}
    \widetilde{N}(z;-1) = \frac{1}{3 2^{1/6}} z^{-\frac{1}{3}} \begin{pmatrix} \sqrt{2}i \\ 1 \\ -1 \end{pmatrix}
    \begin{pmatrix} \omega^{2} & 1 & \omega \end{pmatrix}  + \mathcal{O}(1) \textrm{ as } z \to 0. \end{equation}
Together with \eqref{hom1} this proves the lemma.
\end{proof}

\subsection{Local parametrices}
\label{sectionlocal}

\subsubsection{Local parametrices around $\pm 1$}
The outer parametrix $N$ is intended as an approximation to $S$.
However the approximation cannot be good around the branch points $a$, $0$ and $1$.
Indeed, the entries of $S(z)N^{-1}(z)$ will typically diverge as $z$ tends to one of the branch points.

The solution to this problem is building local approximations around the branch points,
called local parametrices.  The appropriate construction around $a$ and $1$ is standard, and
uses the Bessel model parametrix as defined in \cite{kmvv04}, equations (6.23)-(6.25).
Let $U_{-1}$ and $U_{1}$ be disks around respectively $-1$ and $1$ of fixed but small enough radius:
$U_{1}$ should be contained in $V_{2}$, such that $w_{2}$ is well-defined on
$U_{1} \setminus [1, + \infty)$.  Similarly we must have $U_{-1} \subset V_{1}$, and
additionally $a$ must lie inside $U_{-1}$.  On these disks we construct $3 \times 3$ matrix valued functions
$P_{-1}$ and $P_{1}$ that satisfy the same jumps as $S$, see \eqref{jS1}--\eqref{jS2},
and match with $N$ on the boundary of the disks:
\begin{equation}
    \begin{aligned}
    P_{-1}(z) N(z)^{-1} & = I + \mathcal{O}\left(\frac{1}{n}\right) \quad \textrm{ for } z \in \partial U_{-1}, \\
    P_{1}(z) N(z)^{-1} & = I + \mathcal{O}\left(\frac{1}{n}\right) \quad \textrm{ for } z \in \partial U_{1},
    \end{aligned}  \label{matchinga1}
    \end{equation}
    as $n \to \infty$. The $\mathcal{O}$-terms are uniform in $z$.  For details of the construction of $P_{-1}$ and $P_{1}$
we refer to \cite{kmvv04}, where the Bessel model parametrix was introduced, and \cite{lywi08}, where it
was also used in a $3 \times 3$ matrix valued Riemann-Hilbert problem.

\subsubsection{Local parametrix around $0$: statement}

Around $0$ we need a new kind of local parametrix $P_{0}$ on a disk $U_0$ around the origin.
There are a number of difficulties to obtain the desired matching condition
\[ P_0(z) N(z)^{-1} = I + \mathcal O(n^{-\kappa}) \quad \text{ for } z \in \partial U_0 \]
with some $\kappa > 0$, that in fact we are unable to resolve.
The best we can do is to construct $P_0$ such that $P_0(z) N(z)^{-1}$ remains bounded as $n \to \infty$
for $z$ on a circle of radius that decays like $n^{-1/2}$ as $n \to \infty$.

Thus the disk $U_0$ should be shrinking as $n$ increases, and for definiteness we take
\begin{equation}
    \textrm{ radius } U_{0} := n^{-\frac{1}{2}} , \end{equation}
and we assume $n$ to be large enough so that $U_{0}$ is contained in $V_{1} \cap V_{2}$.
Then consider the following Riemann-Hilbert problem for the local parametrix $P_0$ around $0$.
\begin{itemize}
\item $P_0$ is analytic on $U_{0} \setminus \Sigma_S$,
\item $P_0$ has  jumps
\begin{equation} \label{jP}
        P_{0,+} = P_{0,-} J_P \qquad \text{on } \Sigma_S \cap U_0, \qquad \text{where } J_P = J_S,
        \end{equation}
    see \eqref{jS1}-\eqref{jS2},
\item $P_0(z)$ behaves in the same way as $S(z)$ as $z \to 0$, see \eqref{Snear0},
\item $P_0 N^{-1}$ remains bounded on the boundary of $U_{0}$,
\begin{equation}  \label{matchingcondition}
    P_0(z) N(z)^{-1}  = \mathcal{O}(1) \quad \textrm{ for } z \in \partial U_{0},
    \end{equation}
    as $n \to \infty$, where $a = a_n$  depends on $n$ as in \eqref{an}.
\end{itemize}

Notice that the matching between $P_0(z)$ and $N(z)$ does not improve
with increasing $n$. Indeed, the matrix $P_0(z) N(z)^{-1}$ does
not tend to $I$ as $n \to \infty$ for $z \in \partial U_{0}$.
The matching \eqref{matchingcondition} is the best we can obtain
without modifying the outer parametrix $N$.  However, with $a=a_n$ as in \eqref{an}
we will be able to find a $3 \times 3$ matrix valued function $Z_{n}(\cdot; a)$ such that
\begin{equation}
    P_0(z) N(z)^{-1} = I + Z_n(z;a) + \mathcal{O}\left(n^{-1/6}\right) \quad \textrm{ for } z \in \partial U_{0} \label{appearancez}
\end{equation}
The explicit expression and special properties of $Z_n$ will allow us to create, in the final transformation,
a jump on $\partial U_{0}$ that tends to $I$ as $n \to \infty$.

Since the dependence on $a$ will be important, we emphasize that most notions
depend on $a$ and have limiting values as $a \to -1$. As before, we will not always
explicitly indicate the dependence on $a$, but sometimes we do.

\subsubsection{Reduction to constant jumps}

We factor out the $\varphi_{i}$ and $w_{i}$-functions from the jump matrices \eqref{jS1}-\eqref{jS2}.
Define for $z \in U_{0}$ the matrix valued functions
\begin{align}
    \Lambda(z) & := \frac{2}{3} \begin{pmatrix} \varphi_{1}(z) + \varphi_{2}(z) & 0 & 0 \\
    0 & \varphi_{2}(z) - 2 \varphi_{1}(z) & 0 \\
    0 & 0 & \varphi_{1}(z) - 2 \varphi_{2}(z) \end{pmatrix}, \label{definitionlambda} \\
     W(z) & :=
    \begin{pmatrix} z^{\beta} & 0 & 0 \\ 0 & (z-a)^{-\alpha} h_{1}(z)^{-1} & 0 \\ 0 & 0 & (1-z)^{-\gamma} h_{2}(z)^{-1} \end{pmatrix}.
    \label{definitionw}
    \end{align}
We look for $P_0$ in the form
\begin{equation} \label{Ptilde}
    P_0(z) =  \widetilde{P}_0(z) e^{n\Lambda(z)} W^{-1}(z).
\end{equation}
In order that $P_0$ has the jumps $J_P$, we should have
$\widetilde{P}_{0,+} = \widetilde{P}_{0,-} J_{\widetilde{P}}$ with
\begin{equation}  \label{jumpsm1}
    J_{\widetilde{P}} = \begin{cases}
        \begin{pmatrix} 1 & 0 & 0 \\ e^{\pm \beta \pi i} & 1 & 0 \\ 0 & 0 & 1 \end{pmatrix} \textrm{ on } \Sigma_{1}^{\pm} \cap U_{0}, \\
        \begin{pmatrix}1 & 0 & 0 \\ 0 & 1 & 0 \\ 1 & 0 & 1 \end{pmatrix} \textrm{ on } \Sigma_{2}^{\pm} \cap U_{0}, \\
        \begin{pmatrix} 0 & e^{\beta \pi i} & 0 \\ - e^{\beta \pi i} & 0 & 0 \\ 0 & 0 & 1 \end{pmatrix} \textrm{ on } [a,0] \cap U_{0}, \\
        \begin{pmatrix} 0 & 0 & 1 \\ 0 & 1 & 0 \\ - 1 & 0 & 0 \end{pmatrix} \textrm{ on } [0,1] \cap U_{0}.
        \end{cases}
 \end{equation}

\subsubsection{Functions $f(z)$ and $\tau(z)$}

Note that the jumps \eqref{jumpsm1} are exactly the same as the ones for $\Psi$, see Figure~\ref{figurejumpspsi},
except that the jumps for $\Psi$ are on unbounded rays. Recall that $\Psi(z; \tau)$ also depends
on $\tau$ which appears in the asymptotic condition \eqref{definitionTheta}.

Our aim is to construct $\widetilde{P}_0$ of the form
\begin{equation}  \label{Ptildeform}
    \widetilde{P}_0(z) = E_n(z) \Psi\left(n^{\frac{3}{2}} f(z); n^{\frac{1}{2}} \tau(z)\right),
\end{equation}
where $f(z)$ is a conformal map and $\tau(z)$ is analytic in $U_{0}$.
The matrix valued function $E_n(z)$ is an analytic prefactor, which will be defined in the next subsection.
We are going to choose $f(z)$ and $\tau(z)$ such that
\begin{equation} \Theta\left(n^{\frac{3}{2}}f(z);n^{\frac{1}{2}} \tau(z)\right) + n \Lambda(z) = 0
    \quad \textrm{ for } z \in U_0 \setminus \mathbb{R} \label{droppingexp}
    \end{equation}
where $\Theta$  is given by \eqref{definitionTheta} and $\Lambda$ is given by \eqref{definitionlambda}.
When this condition is satisfied there will be no exponential growth (as $n \to \infty$)
in $P_0(z)$ see \eqref{Ptilde}, and so there is a chance that we can match it with $N$.

Define functions $\lambda_{1}(z)$ and $\lambda_{2}(z)$ on $\mathbb{C} \setminus \mathbb{R}$ by
\begin{align}
    \lambda_{1}(z) & := \begin{cases}
        - z^{-\frac{1}{3}} \left( \varphi_{1}(z) + \omega^{2} \varphi_{2}(z) \right) & \textrm{ for } \im z > 0, \\
        - z^{-\frac{1}{3}} \left( \varphi_{1}(z) + \omega \varphi_{2}(z) \right) & \textrm{ for } \im z < 0,
        \end{cases} \label{lambdavarphi1} \\
    \lambda_{2}(z) & := \begin{cases}
        - z^{-\frac{2}{3}} \left( \varphi_{1}(z) + \omega \varphi_{2}(z) \right) & \textrm{ for } \im z > 0, \\
        - z^{-\frac{2}{3}} \left( \varphi_{1}(z) + \omega^{2} \varphi_{2}(z) \right) & \textrm{ for } \im z < 0,
        \end{cases} \label{lambdavarphi2}
        \end{align}
where $\varphi_1$ and $\varphi_2$ are given by \eqref{definitionvarphi}.
It can be checked that the functions $\lambda_{1}(z)$ and $\lambda_{2}(z)$ have no jumps on
$(a,0)$ or $(0,1)$.  Since $\varphi_{1}$ and $\varphi_{2}$ are bounded, $\lambda_{1}$ and $\lambda_{2}$
have analytic continuations to $\mathbb{C} \setminus ((-\infty, a] \cup [1, \infty))$.
Since the $\varphi$-functions depend on $a$, so do the $\lambda$-functions,
and we write $\lambda(z;a)$ to emphasize this fact.

It may be checked that $\lambda_{1}(z;a)$ and $\lambda_{2}(z;a)$ converge
uniformly in a neighborhood of zero as $a \to -1$, and also that
\begin{equation}
    \begin{aligned} \lambda_{1}(0;a) & =  \frac{3 \cdot 2^{2/3}}{4} (a+1) + \mathcal{O}(a+1)^{2}, \\
        \lambda_{2}(0;a) & = \frac{9 \cdot 2^{1/3}}{4} + \mathcal{O}(a+1)
        \end{aligned} \label{lambda20}\end{equation}
as $a \to -1$. Then $\re \lambda_{2}(z;a) > 0$ for $a$ close enough to $-1$ and $z \in U_0$,
and we can define the following analytic functions in a neighborhood of $0$.
\begin{definition}
For $a$ close enough to $-1$ and $z \in U_0$ we define
\begin{equation}
    \begin{aligned}
    f(z) = f(z;a) & := \frac{8}{27} z \lambda_{2}(z;a)^{\frac{3}{2}}, \\
   \tau(z) = \tau(z;a) & := \frac{\lambda_{1}(z;a)}{\lambda_{2}(z;a)^{\frac{1}{2}}}.
   \end{aligned} \label{definitionftau} \end{equation}
\end{definition}

Then $f$ is a conformal map with $f(0) = 0$, and $f(z)$ is real for real arguments $z$. By \eqref{lambda20}
and \eqref{definitionftau} we have as $a \to -1$.
\begin{equation}  \label{ftauat0}
    \begin{aligned}
    f'(0;a) & = \sqrt{2} + \mathcal{O}(a+1), \\
    \tau(0;a) & = \frac{1}{\sqrt{2}}(a+1) + \mathcal{O}(a+1)^{2}.
    \end{aligned}
\end{equation}

Without loss of generality we can now assume that the lips of the lenses
are chosen such that $f$ maps $\Sigma_{S} \cap U_{0}$ into $\Sigma_{\Psi}$.
Then $\Psi\left(n^{\frac{3}{2}}f(z); n^{\frac{1}{2}} \tau(z) \right)$ is well-defined
and analytic in $U_{0} \setminus \Sigma_{S}$.  It remains to check the condition \eqref{droppingexp}.  By \eqref{definitionthetak} and \eqref{definitionftau} we find:
\begin{align}
\theta_{k} \left( n^{\frac{3}{2}} f(z); n^{\frac{1}{2}} \tau(z) \right) & = -\frac{3n}{2} \omega^{k} f(z)^{\frac{2}{3}} - n \tau(z) \omega^{2k} f(z)^{\frac{1}{3}} \notag\\
& = -\frac{2n}{3} \left(\omega^{k} z^{\frac{2}{3}} \lambda_{2}(z) + \omega^{2k} z^{\frac{1}{3}} \lambda_{1}(z) \right)\end{align}

By \eqref{lambdavarphi1} and \eqref{lambdavarphi2} the right hand sides are exactly minus $n$ times the components of $\Lambda$, and \eqref{droppingexp} follows.

\subsubsection{Prefactor $E_n(z)$}

Next we define a suitable analytic prefactor $E_n(z) = E_n(z;a)$ such that the local parametrix
\begin{equation}
    P_0(z) = E_n(z) \Psi\left(n^{\frac{3}{2}} f(z); n^{\frac{1}{2}} \tau(z)\right) e^{n\Lambda(z)} W(z)^{-1}
    \label{definitionp} \end{equation}
     satisfies the matching condition \eqref{matchingcondition} with $N$ on $\partial U_{0}$.
     Also we obtain an expression for the function $Z_n$ in \eqref{appearancez}.

Denote by $A(z;\tau)$ the right-hand side of \eqref{asymptoticexpansionpsi}
without the exponential factor $e^{\Theta(z; \tau)}$ and the error factor $I + \mathcal{O}(z^{-\frac{1}{3}})$.
Thus
\begin{equation}
    A(z; \tau) := \sqrt{\frac{2 \pi}{3}} e^{\tau^{2}/6} z^{ \frac{\beta}{3}}
    \begin{pmatrix} z^{\frac{1}{3}} & 0 & 0 \\ 0 & 1 & 0 \\ 0 & 0 & z^{-\frac{1}{3}} \end{pmatrix}
    \Omega_{\pm} B_{\pm}, \qquad \pm \im z > 0. \label{definitiona}
\end{equation}

Then we define
\begin{definition}
For $a$ close enough to $-1$ and $z \in U_0$ we define
\begin{equation}
    E_n(z;a) = N(z;a) W(z;a) A^{-1}\left(n^{\frac{3}{2}} f(z;a); n^{\frac{1}{2}} \tau(z;a) \right).
    \label{En}
\end{equation}
where $f(z;a)$ and $\tau(z;a)$ are given by \eqref{definitionftau}.
\end{definition}

\begin{proposition}
The function $E_n$ defined by \eqref{En} is analytic on $U_{0}$.\label{propositioneisanalytic} \end{proposition}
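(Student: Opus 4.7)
The plan is to establish analyticity of $E_n$ on $U_0$ by verifying two things: (a) $E_n$ has no jumps across the arcs of $\Sigma_S$ that lie inside $U_0$, and (b) the potential isolated singularity of $E_n$ at $z=0$ is removable.

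For (a), on the lens lips $\Sigma_1^{\pm}\cup\Sigma_2^{\pm}$ the matrices $N$ and $W$ are analytic (their only cuts on the real line are on $[a,1]$), and $A(n^{3/2}f(z);n^{1/2}\tau(z))$ is analytic across the lips: the branch cuts of the factors $w^{\beta/3}, w^{\pm 1/3}$ in $A(w;\tau)$ lie on the negative real $w$-axis, and the conformal map $f$ sends the real line to the real line, so these cuts pull back to the real axis and not to the lips; moreover the two sectorial prescriptions using $\Omega_\pm B_\pm$ agree on each fixed open half-plane. Hence $E_n$ has no jumps on the lips. On the real segments $(a,0)\cap U_0$ and $(0,1)\cap U_0$, the cleanest argument is indirect: by construction, $\widetilde P_0(z):=E_n(z)\,\Psi(n^{3/2}f(z);n^{1/2}\tau(z))$ must satisfy $\widetilde P_{0,+}=\widetilde P_{0,-}J_{\widetilde P}$ in order that $P_0=\widetilde P_0\,e^{n\Lambda}W^{-1}$ has the prescribed jumps $J_S$; on the other hand $\Psi$ satisfies $\Psi_+=\Psi_- J_\Psi$ with $J_\Psi=J_{\widetilde P}$ on $\Sigma_\Psi$, and combining these two relations forces $E_{n,+}=E_{n,-}$. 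One may alternatively verify this directly by multiplying out the jumps of $N$ from \eqref{jumpsn}, of $W$ coming from the branch cuts of $z^\beta$, $(z-a)^{-\alpha}$, $(1-z)^{-\gamma}$, and of $A^{-1}$ coming from $w^{\beta/3}$, $w^{\pm 1/3}$ together with the swap $\Omega_+B_+\leftrightarrow\Omega_-B_-$.

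For (b), a naive power count at $z=0$ using Proposition \ref{propbehaviorN}, the factor $z^\beta$ in $W_{11}$, and the formula $A^{-1}(w;\tau)=\mathrm{const}\cdot B_\pm^{-1}\Omega_\pm^{-1}\,\diag(w^{-1/3},1,w^{1/3})\,w^{-\beta/3}$ shows that each entry of $E_n(z)$ is a priori of order $z^{-2/3}$. To show the coefficients of $z^{-2/3}$ and $z^{-1/3}$ vanish, I would substitute the leading expansion from Lemma \ref{lemmaleadingtermsn}, namely $\widetilde N(z;a)=\widetilde N_0(a)z^{-1/3}+\mathcal O(1)$, together with the Szeg\H{o} expansion \eqref{Dnear0} and the local mapping data $\xi_j(z)\sim -C(a)\omega^k z^{1/3}$. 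The decisive structural observation is that the row vector in the rank-one matrix $\widetilde N_0(a)$ equals $(-\omega,1,\omega^2)$ or $(\omega^2,1,\omega)$ according to $\pm\,\mathrm{Im}\,z>0$, which is precisely (up to sign) the third row of $\Omega_\pm$ in \eqref{definitionomega}. Combined with the $e^{\pm\beta\pi i/3}$ phases from $B_\pm$ and the matching phases from \eqref{Dnear0}, this orthogonality produces the required cancellation of the $z^{-2/3}$ and $z^{-1/3}$ contributions, after which the remainder is bounded as $z\to 0$.

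The main obstacle is the algebraic cancellation in (b), which requires careful bookkeeping of the various $\omega$-phases and $\beta\pi i/3$-phases arriving from $\Omega_\pm$, $B_\pm$, the Szeg\H{o} factors $D_0,D_1,D_2$, and the three sheets of the $\xi$-mapping. A pragmatic route is to carry out the cancellation first at the symmetric value $a=-1$ using the explicit constants $p_1(-1)=-1/\sqrt 3$, $p_2(-1)=1/\sqrt 3$, $C(-1)=2^{-1/3}$ and the expansion \eqref{Dnear0aisminus1}, and then extend to all $a$ near $-1$ by analytic dependence of $N(\cdot;a)$, $W(\cdot;a)$, $f(\cdot;a)$ and $\tau(\cdot;a)$ on $a$, combined with the fact that a singularity of bounded order which vanishes identically at one parameter value must vanish on the whole parameter family.
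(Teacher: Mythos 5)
Your part (a) follows the same route as the paper: compare the jump of $A(n^{3/2}f(z);n^{1/2}\tau(z))$ on $\mathbb{R}\cap U_0$ with the jump of $N(z)W(z)$ there and see that they cancel, so $E_n$ is single-valued across the real axis; you correctly add the (straightforward) observation that nothing in $E_n$ has a cut on the lens lips.

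In part (b), however, you overlook the short argument the paper actually uses and set yourself a harder, unnecessary task. You correctly carry out the power count and find that each entry of $E_n(z)$ is $\mathcal{O}(z^{-2/3})$ as $z\to 0$. But then you say you would need to prove that the coefficients of $z^{-2/3}$ and $z^{-1/3}$ vanish via an algebraic cancellation of $\omega$-phases and $\beta\pi i/3$-phases arriving from $\Omega_\pm$, $B_\pm$, $D_0,D_1,D_2$. None of that is needed. Having established in part (a) that $E_n$ has no jump across any arc of $\Sigma_S\cap U_0$, you know $E_n$ is a genuine single-valued analytic function on the punctured disk $U_0\setminus\{0\}$. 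An isolated singularity of such a function satisfying $E_n(z)=\mathcal{O}(z^{-2/3})$ is removable by Riemann's removable singularity theorem, since $zE_n(z)\to 0$. There is simply no $z^{-2/3}$ or $z^{-1/3}$ term that could appear in a single-valued Laurent expansion, so there is nothing to cancel. Your proposed explicit cancellation would, if carried out carefully, arrive at the same conclusion, but it reproves by brute force what the bound already gives for free, and it introduces considerable bookkeeping risk (tracking $\widetilde N_0(a)$, $D_j$, $\xi_j$, and the two half-planes) that the paper deliberately avoids. The lesson: once single-valuedness is known, a sub-$z^{-1}$ growth bound is already a complete removability argument.
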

\begin{proof}

The function $A$ from \eqref{definitiona} has jumps on the real line, given by $A_{+} = A_{-} J_{A}$ with
\begin{equation}
\begin{aligned}
    J_{A} = \begin{pmatrix} 0 & e^{\beta \pi i} & 0 \\  - e^{\beta \pi i} & 0 & 0 \\ 0 & 0 & 1 \end{pmatrix}
        \textrm{ on } \mathbb R^-, \quad
        J_{A}= \begin{pmatrix} 0 & 0 & 1 \\ 0 & 1 & 0 \\ - 1 & 0 & 0 \end{pmatrix}
    \textrm{ on } \mathbb R^+. \end{aligned}
    \end{equation}
Then $A\left(n^{\frac{3}{2}} f(z); n^{\frac{1}{2}} \tau(z) \right)$ has the
corresponding jumps on $\mathbb{R} \cap U_0$.  A straightforward calculation reveals that $N(z) W(z)$ has
exactly the same jumps on $U_{0} \cap (a,0)$ and $U_{0} \cap (0,1)$, and hence $E_n(z)$ has no
branch cuts in $U_{0}$.

Also $E_n$ can have no pole in $0$.  By \eqref{behaviorN} and the definition of $W$ \eqref{definitionw} we have that
\begin{equation} N(z) W(z) = \mathcal{O}\left( z^{\frac{\beta -1}{3}}\right) \textrm{ as } z \to 0 \label{beh1} \end{equation}
From \eqref{definitiona} and the fact that $f$ is a conformal map with $f(0) = 0$ we obtain
\begin{equation} A\left( n^{\frac{3}{2}} f(z); n^{\frac{1}{2}} \tau(z) \right)^{-1} = \mathcal{O} \left( z^{-\frac{\beta + 1 }{3}}\right) \label{beh2} \end{equation}
Then by \eqref{En} and  \eqref{beh1} and \eqref{beh2} we see that $E_n(z) = \mathcal{O}(z^{-\frac{2}{3}})$, and
therefore the isolated singularity at $0$ is removable.
\end{proof}

We remark that by a similar argument we have that $E_n^{-1}(z;a)$ is analytic in $U_0$
and in particular at $z=0$ as well.


\subsubsection{Matching condition}
\label{subsectionmatchingcondition}

We show that the matching condition \eqref{matchingcondition} holds, and
we compute $Z_n$ from \eqref{appearancez}.
\begin{proposition} \label{prop:Znza}
The parametrix $P_0$ defined by \eqref{Ptilde}, \eqref{Ptildeform}, \eqref{En}
satisfies the matching condition \eqref{matchingcondition} as $n \to \infty$
with $a= a_n$  as in \eqref{an}. The matching  \eqref{appearancez} holds with
\begin{multline}
    Z_n(z;a) = -\frac{1}{81 \cdot 2^{1/3}} \frac{ \tau(z;a) ( n \tau^{2}(z;a) + 9\beta)}{z^{\frac{2}{3}} f(z;a)^{\frac{1}{3}}} \\
    \times
    D_{\infty}^{-1} \begin{pmatrix} \sqrt{2}i \\ 1 \\ -1 \end{pmatrix} \begin{pmatrix} \sqrt{2}i& 1 & -1 \end{pmatrix}
    D_{\infty}. \label{definitionz}
\end{multline}
\end{proposition}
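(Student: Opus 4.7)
The plan is to unfold $P_0 N^{-1}$ using $E_n = NWA^{-1}$, insert the refined expansion of $\Psi$ from Lemma \ref{lem:Psi1}, exploit the built-in cancellation $\Theta + n\Lambda \equiv 0$, and then see how the $\mathcal{O}(z^{-2/3})$ singularities of $NW$ and $W^{-1}N^{-1}$ at the origin interact with the $(\Psi_1)_\pm w^{-1/3}$ term to produce the explicit rank-one profile in \eqref{definitionz}.

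Writing $w = n^{3/2}f(z)$ and $\tilde\tau = n^{1/2}\tau(z;a)$, substituting \eqref{En} into \eqref{definitionp} gives $P_0 N^{-1} = NW\,A^{-1}\Psi\,e^{n\Lambda}W^{-1}N^{-1}$. The refined expansion \eqref{refinedexpansion} can be rewritten as $\Psi = A B_\pm^{-1}\bigl(I + (\Psi_1)_\pm w^{-1/3} + \mathcal{O}(w^{-2/3})\bigr)B_\pm e^{\Theta}$, since by \eqref{definitiona} the prefactor standing to the left of $(I+\cdots)$ in \eqref{refinedexpansion} is exactly $A B_\pm^{-1}$. Using \eqref{droppingexp} and the commutativity of all the relevant diagonal matrices, $A^{-1}\Psi e^{n\Lambda}$ collapses to $B_\pm^{-1}(I + (\Psi_1)_\pm w^{-1/3} + \mathcal{O}(w^{-2/3}))B_\pm$, and since $NWB_\pm^{-1}\cdot I\cdot B_\pm W^{-1}N^{-1} = I$ one arrives at
\begin{equation*}
P_0 N^{-1} = I + w^{-1/3}\,NW B_\pm^{-1}(\Psi_1)_\pm B_\pm W^{-1}N^{-1} + \mathcal{O}\bigl(w^{-2/3}\cdot NW\cdot W^{-1}N^{-1}\bigr).
\end{equation*}
On $\partial U_0$ one has $|z| = n^{-1/2}$, hence $|w|\asymp n$ and $w^{-1/3} = \mathcal{O}(n^{-1/3})$; by \eqref{an} and \eqref{ftauat0}, $\tilde\tau = \mathcal{O}(1)$, so $(\Psi_1)_\pm = \mathcal{O}(1)$; by \eqref{behaviorN}--\eqref{behaviorNinv}, entries of $NW\cdot(\mathrm{const})\cdot W^{-1}N^{-1}$ are $\mathcal{O}(z^{-2/3}) = \mathcal{O}(n^{1/3})$. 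Thus the remainder contributes $\mathcal{O}(n^{-1/3})$ and the displayed main term is $\mathcal{O}(1)$, consistent with the size of $Z_n$.

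To extract the closed form \eqref{definitionz}, I use Lemma \ref{lemmaleadingtermsn}, $\widetilde N(z;a) = z^{-1/3}\widetilde N_0(a) + \mathcal{O}(1)$, which exhibits the leading profile of $\widetilde N$ as \emph{rank one}: $\widetilde N_0(a) = v(a)u_\pm^{\transpose}$ with $v(a) = (3\cdot 2^{1/6})^{-1}\widetilde N(\tfrac{1-a}{1+a};-1)^{-1}(\sqrt 2 i,1,-1)^{\transpose}$ and $u_\pm$ from \eqref{leadingtermsn2}. Combining with the Szeg\H o function behavior \eqref{Dnear0aisminus1} and the symmetry \eqref{Ntildesymmetry} yields rank-one leading profiles
\begin{equation*}
NW B_\pm^{-1} = z^{(\beta-1)/3}D_\infty^{-1}v(a)\rho_\pm^{\transpose} + \mathcal{O}(z^{\beta/3}), \quad B_\pm W^{-1}N^{-1} = z^{-(\beta+1)/3}\sigma_\pm v(a)^{\transpose}D_\infty + \mathcal{O}(z^{-\beta/3}),
\end{equation*}
for explicit constant vectors $\rho_\pm,\sigma_\pm$ assembled from $u_\pm$, $B_\pm$, and the Szeg\H o constants of \eqref{Dnear0aisminus1}. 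The rank-one structure then collapses the sandwich to
\begin{equation*}
NWB_\pm^{-1}(\Psi_1)_\pm B_\pm W^{-1}N^{-1} = z^{-2/3}\bigl(\rho_\pm^{\transpose}(\Psi_1)_\pm\sigma_\pm\bigr)D_\infty^{-1}v(a)v(a)^{\transpose}D_\infty + \mathcal{O}(z^{-1/3}),
\end{equation*}
and as $a\to -1$, $v(a)v(a)^{\transpose}\to (9\cdot 2^{1/3})^{-1}(\sqrt 2 i,1,-1)^{\transpose}(\sqrt 2 i,1,-1)$ at rate $\mathcal{O}(a+1) = \mathcal{O}(n^{-1/2})$, matching the rank-one matrix in \eqref{definitionz}.

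The main obstacle is the scalar identity
\begin{equation*}
\rho_\pm^{\transpose}(\Psi_1)_\pm(\tilde\tau)\sigma_\pm = -\tfrac{1}{9}\tilde\tau\bigl(\tilde\tau^2 + 9\beta\bigr) + \mathcal{O}(n^{-1/2}),
\end{equation*}
since Lemma \ref{lem:Psi1} actually carries a $9(\beta+1)$ in the diagonal part of $(\Psi_1)_\pm$: the $-\tilde\tau/9$ off-diagonal piece must cancel the spurious $-\tilde\tau$, which demands $\rho_\pm^{\transpose}\diag(\omega,1,\omega^2)\sigma_\pm = 3$ and $\rho_\pm^{\transpose}M_\pm\sigma_\pm = -9$, where $M_\pm$ is the off-diagonal matrix in the $\pm$ case of Lemma \ref{lem:Psi1}. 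I would verify these two $3\times 3$ identities by direct computation, separately in the upper and lower half-planes, from the explicit $\Omega_\pm, B_\pm, u_\pm$ and the constants of \eqref{Dnear0aisminus1}. Substituting $\tilde\tau = n^{1/2}\tau(z;a)$, multiplying by $w^{-1/3} = n^{-1/2}f(z)^{-1/3}$, and combining with $v(-1)v(-1)^{\transpose}/(9\cdot 2^{1/3})$ produces \eqref{definitionz} exactly. The aggregate error is dominated by the $\mathcal{O}(z^{1/3}) = \mathcal{O}(n^{-1/6})$ subleading corrections to $\widetilde N$ and to the $D_j$'s in \eqref{Dnear0aisminus1}; the $\mathcal{O}(n^{-1/3})$ remainder and the $\mathcal{O}(n^{-1/2})$ error from $v(a_n) - v(-1)$ are smaller, giving the claimed $\mathcal{O}(n^{-1/6})$.
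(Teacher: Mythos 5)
Your proposal is correct and follows essentially the same route as the paper's proof: you insert the refined expansion of $\Psi$ from Lemma \ref{lem:Psi1}, use the cancellation $\Theta + n\Lambda = 0$ to reduce $P_0N^{-1}$ to the sandwich $NW B_\pm^{-1}(I + (\Psi_1)_\pm w^{-1/3} + \cdots)B_\pm W^{-1}N^{-1}$, track the orders coming from the $z^{\pm(\beta\mp1)/3}$ blow-up of $NW$ and $W^{-1}N^{-1}$ on $|z|=n^{-1/2}$, and then exploit the rank-one leading profile of $\widetilde N_0(a)$ from Lemma \ref{lemmaleadingtermsn} to collapse the sandwich to a scalar times $D_\infty^{-1}(\sqrt 2 i,1,-1)^{\transpose}(\sqrt 2 i,1,-1)D_\infty$; the scalar reduces to $u_\pm(\Psi_1)_\pm u_\pm^{\transpose}$ with $u_\pm$ as in \eqref{leadingtermsn2}, and the two constituent identities $u_\pm\diag(\omega,1,\omega^2)u_\pm^{\transpose}=3$ (resp.\ $\diag(\omega^2,1,\omega)$ for the lower half-plane) and $u_\pm M_\pm u_\pm^{\transpose}=-9$ do hold, giving $-\frac{\tau}{9}(\tau^2+9\beta)$ as claimed. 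Your error bookkeeping ($n^{-1/6}$ from the $\mathcal{O}(1)$ correction to $\widetilde N = z^{-1/3}\widetilde N_0 + \mathcal{O}(1)$ dominating both the $n^{-1/3}$ from the $\mathcal O(w^{-2/3})$ tail and the $n^{-1/2}$ from $v(a_n)-v(-1)$) also matches the paper.
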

\begin{proof}
The local parametrix $P_0$, written in full, is given by
\begin{equation} \label{Pinfull}
    P_0(z) = N(z) W(z) A\left( n^{\frac{3}{2}} f(z); n^{\frac{1}{2}} \tau(z) \right)^{-1}
    \Psi\left(n^{\frac{3}{2}} f(z), n^{\frac{1}{2}} \tau(z) \right) e^{n\Lambda(z)} W(z)^{-1},
\end{equation}
where all functions also depend on $a$.

We first observe that $n^{\frac{1}{2}} \tau(z;a)$ remains bounded for $z \in U_0$ as $n \to \infty$.
Here we need the fact that we took the radius of $U_{0}$ to be $n^{-\frac{1}{2}}$, and
that $a = a_n = -1 + \mathcal{O}(n^{-1/2})$. Since $\tau(0;-1) = 0$, see \eqref{ftauat0}
we indeed obtain
\begin{equation}
    \tau(z;a) = \tau(z;-1) + \mathcal{O}\left(a-a_n\right) = \mathcal{O}(n^{-1/2}) \quad \text{for } |z| = n^{-1/2},
\end{equation}
as $n \to \infty$.

It follows that we can use the asymptotic expansion \eqref{refinedexpansion} for $\Psi(z;\tau)$
in \eqref{Pinfull}, since \eqref{refinedexpansion} is uniformly valid for $\tau$-values in a bounded set.
Then by combining \eqref{refinedexpansion} and the definition \eqref{definitiona}  of $A$ we get that
a number of factors cancel, and what remains from \eqref{Pinfull} is
\begin{multline}
 P_0(z) N(z)^{-1} = N(z) W(z) B_{\pm}^{-1} \\
    \times  \left( I + \frac{(\Psi_{1})_{\pm} (n^{\frac{1}{2}}\tau(z))}{n^{\frac{1}{2}} f(z)^{\frac{1}{3}}}
    + \mathcal{O} ( n^{-\frac{2}{3}}) \right) B_{\pm} W(z)^{-1} N(z)^{-1}\label{end1}
\end{multline}
as $n \to \infty$ uniformly for $z \in \partial U_0$.

For $z \in \partial U_{0}$ we have $|z| = n^{-\frac{1}{2}}$ and the entries
of $N W$ are $\mathcal{O} (z^{\frac{\beta -1}{3}})$ as $z \to 0$.
Hence $(NW)(z)$ is $\mathcal{O}(n^{\frac{1-\beta}{6}})$ for $z$ on $\partial U_{0}$ as $n\to \infty$.
For the inverse of $NW$ we get $\mathcal{O}(n^{- \frac{\beta +1}{6}})$ entries on $\partial U_{0}$.
Hence the $\mathcal{O}(n^{-\frac{2}{3}})$ term in the middle factor of the right hand side of \eqref{end1}
turns into a $\mathcal{O}(n^{-\frac{1}{3}})$ term:
\begin{multline}
 P_0(z) N(z)^{-1} = I + \frac{1}{n^{\frac{1}{2}} f(z)^{\frac{1}{3}}} N(z) W(z) B_{\pm}^{-1} \\
       \times (\Psi_{1})_{\pm}(n^{\frac{1}{2}} \tau(z)) B_{\pm} W(z)^{-1} N(z)^{-1}
        + \mathcal{O}\left( n^{-\frac{1}{3}} \right).\label{end2}
\end{multline}
In the same way we find that the second term in the right-hand side of \eqref{end2} is $\mathcal{O}(1)$
as $n \to \infty$. We evaluate this term in more detail.

By \eqref{ddinfty} we have
\[ N(z) W(z) B_{\pm}^{-1} = D_{\infty}^{-1} \widetilde{N}(z) D(z) W(z) B_{\pm}^{-1} \]
where by \eqref{factorizationn}--\eqref{ddinfty}, \eqref{definitionw}, and \eqref{definitionomega},
the last three matrices on the right-hand side are diagonal and their product satisfies for
some constant $c \neq 0$,
\[ D(z) W(z) B_{\pm}^{-1} = c z^{\beta/3} (I + \mathcal{O}(z)) \qquad \textrm{ as } z \to 0. \]
Furthermore the leading behavior of $\widetilde{N}(z)$ and its inverse as $z \to 0$
follow from \eqref{Ntildesymmetry} and \eqref{leadingtermsn1}.
Thus \eqref{end2} reduces to (where we emphasize again the dependence on $a$),
\begin{multline}
 P_0(z) N(z)^{-1} = I +  \frac{1}{n^{\frac{1}{2}} z^{\frac{2}{3}}f(z;a)^{\frac{1}{3}}} \\
 \times D_{\infty}^{-1}
            \widetilde{N}_0(a) (\Psi_{1})_{\pm}(n^{\frac{1}{2}} \tau(z;a)) \widetilde{N}_0^{T}(a) D_{\infty}
        + \mathcal{O}\left( n^{-\frac{1}{6}} \right).  \label{end3}
\end{multline}

By \eqref{leadingtermsn2} and \eqref{asymptoticconditionn} we have
\begin{align*} \widetilde{N}_0(a) = \frac{1}{3 \cdot 2^{1/6}} \left( I + \mathcal{O}(a+1) \right)
    \begin{pmatrix} \sqrt{2} i \\ 1 \\ -1 \end{pmatrix} \times
         \begin{cases}
            \begin{pmatrix} -\omega & 1 & \omega^2 \end{pmatrix} & \textrm{for } \im z  > 0, \\
      \begin{pmatrix} \omega^2 & 1 & \omega \end{pmatrix}  & \textrm{for } \im z < 0,
      \end{cases} \end{align*}
as $a \to -1$. We plug this and the explicit formulas for $(\Psi_1)_{\pm}$ (see Lemma \ref{lem:Psi1})
into \eqref{end3}. Then after some calculations we indeed obtain
\eqref{appearancez} with $Z_n(z;a)$ given by \eqref{definitionz}.

Since $n \tau^2(z;a)$ remains bounded as $n \to \infty$ if $|z| = n^{-1/2}$ and $a = a_n = -1 + \mathcal{O}(n^{-1/2})$,
we obtain that $Z_n(z;a) = \mathcal{O}(1)$. This proves the proposition.
\end{proof}

We note from \eqref{definitionz} that $Z_n(z;a)$ is analytic in a punctured neighborhood of  $z=0$
with a simple pole at $z=0$.  
It also follows from \eqref{definitionz} that
\begin{equation} \label{eq:Znsquare} 
	Z_n(z_1;a) Z_n(z_2;a) = 0 \qquad \text{for all } z_1, z_2 \text{ near } 0. 
	\end{equation}
This property will be important in the final transformations

\section{Final transformations}
\label{sectionfinaltransfo}

We will do the final transformation $S \mapsto R$ in two steps.
First we will define $R_{0}$ as the approximation error between $S$ and the parametrices
$N, P_{-1}, P_{0}$ and $P_{1}$.
The jump matrices of $R_{0}$ will tend to the identity matrix as $n \to \infty$ on all parts of
the jump contour $\Sigma_R$, except on $\partial U_{0}$.

Via a global transformation we finally define $R$, in such a way that it also has a jump
that tends to the identity matrix on $U_0$. As a result of the steepest descent analysis,
we then derive a global and uniform estimate for this function $R$.

\subsection{Transformation $S \mapsto R_0$}

Define $R_{0}(z)$ as
\begin{equation} \label{defR0}
    R_{0}(z) = \begin{cases}
        S(z) N^{-1}(z), & z \in \mathbb{C} \setminus (\Sigma_{S} \cup \overline{U_{-1}} \cup \overline{U_{0}} \cup \overline{U_{1}}), \\
        S(z) P_{-1}^{-1}(z), & z \in U_{-1} \setminus \Sigma_{S}, \\
        S(z) P_{0}^{-1}(z), & z \in U_{0} \setminus \Sigma_{S}, \\
        S(z) P_{1}^{-1}(z), & z \in U_{1} \setminus \Sigma_{S}. \end{cases}
         \end{equation}

Then $R_{0}$ is defined and analytic on $\mathbb{C}$ minus the interval $[a,1]$, the lenses $\Sigma_{1,2}^{\pm}$,
and the circles $\partial U_{-1}$, $\partial U_{0}$, and $\partial U_{1}$.  By comparing the jumps of $S$
and the parametrices we find that $R$ has analytic continuation
into each of the disks, and across the parts of the real intervals $(a,0)$ and $(0,1)$ outside
of the disks. The singularities at $a$, $0$ and $1$ are removable. For $a$ and $1$ this
follows from the behavior of the Bessel parametrix given in \cite{kmvv04}. For
$0$ it requires a special check involving the behavior of $\Psi$ and $\Psi^{-1}$
that we will not give here.   The function $R_{0}$ will then have jumps along the reduced contour $\Sigma_{R}$
shown in Figure \ref{figurer}.

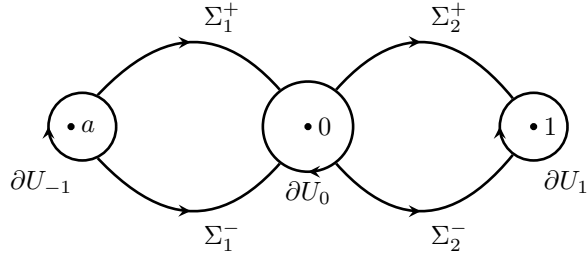
\begin{figure}[t]
\centering
\begin{tikzpicture}[scale = 3,line width = 1, decoration = {markings, mark = at position .5 with {\arrow{stealth};}} ]
\draw [postaction = {decorate}](0,0) .. controls (.3333,.5) and (.6666,.5) .. node[above right]{$\Sigma_{2}^{+}$}(1,0);
\draw [postaction = {decorate}](0,0) .. controls (.3333,-.5) and (.6666,-.5) .. node[below right]{$\Sigma_{2}^{-}$}(1,0);
\draw [postaction = {decorate}](-1.05,0) .. controls (-.6666,.5) and (-.3333,.5) .. node[above right]{$\Sigma_{1}^{+}$}(0,0);
\draw [postaction = {decorate}](-1.05,0) .. controls (-.6666,-.5) and (-.3333,-.5) .. node[below right]{$\Sigma_{1}^{-}$} (0,0);
\filldraw[white] (0,0) circle (.20);
\draw (0,-.2) node[below] {$\partial U_{0}$};
\filldraw[white] (-1,0) circle (.15);
\draw (-1,-.15) node[below left] {$\partial U_{-1}$};
\draw [postaction = {decorate}] (-.85,0) arc (0:-360:.15);
\draw [postaction = {decorate}] (0,.2) arc (90:-270:.2);
\filldraw[white] (1,0) circle (.15);
\draw (1,-.15) node[below right] {$\partial U_{1}$};
\draw [postaction = {decorate}] (1.15,0) arc (0:-360:.15);
\filldraw[black] (-1.05,0) circle (0.010) ;
\draw (-1.05,0) node[right]{$a$};
\filldraw[black] (0,0) circle (0.010);
\draw (0,0) node[right]{$0$};
\filldraw[black] (1,0) circle (0.010);
\draw (1,0) node[right]{$1$};
\end{tikzpicture}
\caption{The contour $\Sigma_{R}$ that consists of the circles $\partial U_{-1}$, $\partial U_0$, $U_1$
and the parts of the lenses $\Sigma_1^{\pm}$ and $\Sigma_2^{\pm}$ outside the disks.}
\label{figurer}
\end{figure}

We choose clockwise orientation for the circles. The lips of the lenses are oriented from left to
right, as before. Then $R_0$ satisfies the following RH problem.

\begin{itemize}
\item $R_{0}$ is defined and analytic on $\mathbb{C}\setminus \Sigma_{R}$,
\item $R_{0}$ satisfies the jump relation $R_{0,+} = R_{0,-} J_{R_{0}}$
    on $\Sigma_{R}$ with
\begin{equation}
    J_{R_{0}}(z) = \begin{cases} N(z) J_{S}(z) N^{-1}(z), &
            z \in \Sigma_R \setminus (\partial {U}_{-1} \cup \partial {U}_{0} \cup \partial {U}_{1}), \\
        P_{-1}(z) N^{-1}(z), & z \in  \partial U_{-1}, \\
        P_{0}(z) N^{-1}(z), & z \in \partial U_{0}, \\
        P_{1}(z) N^{-1}(z), & z \in \partial U_1,
        \end{cases} \label{jumpsr0} \end{equation}
\item $R_{0}(z) = I + \mathcal{O}\left(z^{-1}\right)$ as $z \to \infty$.
\end{itemize}

Due to the matching conditions for the local parametrices \eqref{matchinga1} and \eqref{matchingcondition} we have
\begin{equation}
    J_{R_{0}}(z) = I + \mathcal{O} \left( \frac{1}{n} \right) \textrm{ uniformly for } z \in \partial U_{-1} \cup \partial U_{1} \label{gor1} \end{equation}

On the lips of the lenses the off-diagonal entries of the jumps of $S$ involve
the functions $\varphi_{1,2}$, and they do not necessarily have
negative real parts along the lenses $\Sigma^{\pm}_{1,2}$.
However, due to the estimates in Lemma \ref{lem:revarphij} we may conclude that
for $a = a_n = -1 + \mathcal{O}(n^{-1/2})$, we have for some constant $C > 0$,
\[ \re \varphi_j(z;a_n) <  - C n^{-1/3},  \qquad \text{for } z \in \Sigma_R \setminus
    \left(\partial U_{-1} \cup \partial U_0 \cup \partial U_1\right) \]
Then by \eqref{jS2} we have $J_{S} = I + \mathcal{O}(e^{- C n^{2/3}})$, and
so by \eqref{jumpsr0} we also obtain
\begin{equation} \label{gor2}
        J_{R_{0}}(z) = I + \mathcal{O} \left( e^{-C n^{2/3}} \right) \text{ as } n \to \infty
\end{equation}
with a possibly different constant $C > 0$. The $\mathcal{O}$-term in  \eqref{gor2} holds uniformly
for $z \in \Sigma_R \setminus (\partial U_{-1} \cup \partial U_0 \cup \partial U_{1})$.

Due to \eqref{appearancez} and \eqref{jumpsr0} the jump matrix $J_{R_0}$ on $\partial U_{0}$
is given by
\begin{equation}
        J_{R_{0}}(z) = P_{0}(z) N(z)^{-1} = I + Z_n(z;a) + \mathcal{O}\left( n^{-\frac{1}{6}}\right)
        \textrm{ for } z \in \partial U_0,
\end{equation}
with $Z_n(z;a)$ given by \eqref{definitionz}, and the $\mathcal{O}$-term is valid under
the assumption that $a=a_n = -1 + \mathcal{O}(n^{-1/2})$ as $n \to \infty$, see Proposition \ref{prop:Znza}
The jump matrix on $\partial U_0$ does not converge to the identity matrix as $n \to \infty$.
Therefore we need one more transformation.

\subsection{Transformation $R_0 \mapsto R$}

Let $Z^{(0)}_n(a)$ denote the residue of $Z_n(z;a)$ at the simple pole $z =0$. Thus by
\eqref{definitionz}
\begin{equation} \label{definitionz0}
\begin{aligned}
     Z^{(0)}_n(a)  = \lim_{z\to 0} z Z_n(z;a)
    & = - \frac{1}{81 \cdot 2^{1/3}} \frac{ \tau(0;a) (n \tau^2(0;a) + 9 \beta)}{f'(0;a)^{1/3}}
        \\ & \qquad         \times
    D_{\infty}^{-1} \begin{pmatrix} \sqrt{2}i \\ 1 \\ -1 \end{pmatrix} \begin{pmatrix} \sqrt{2}i& 1 & -1 \end{pmatrix}
    D_{\infty}.
\end{aligned}
\end{equation}
We define the new matrix-valued function $R$ as
\begin{equation} \label{defRz}
    R(z) = R_0(z) \times \begin{cases} \ds  I - \frac{Z^{(0)}_n(a)}{z}, &
    \textrm{ for } z \in \mathbb C \setminus (\Sigma_{R} \cup \overline{U}_{0}), \\
    \ds I +  Z_n(z;a) - \frac{Z^{(0)}_n(a)}{z}, & \textrm{ for } z \in U_{0}.
    \end{cases}
\end{equation}
Then $R$ is defined and analytic in $\mathbb{C} \setminus \Sigma_{R}$,
and satisfies
\begin{itemize}
\item $R$ is analytic on $\mathbb{C}\setminus \Sigma_{R}$,
\item $R$ satisfies the jump relation $R_{+} = R_{-} J_{R}$
    on $\Sigma_{R}$ with
   \begin{equation} \label{jumpsr}
   J_R(z) = \begin{cases}
    \left(I + Z_n(z;a) - \frac{Z^{(0)}_n(a)}{z}\right)^{-1} J_{R_0}(z) \left(I + \frac{Z^{(0)}_n(a)}{z} \right)
        & z \in \partial U_0, \\
    \left(I - \frac{Z^{(0)}_n(a)}{z} \right)^{-1} J_{R_0}(z)
    \left(I - \frac{Z^{(0)}_n(a)}{z} \right) &
       z \in \Sigma_R \setminus \partial U_0.
       \end{cases} \end{equation}
\item $R(z) = I + \mathcal{O}\left(z^{-1}\right)$ as $z \to \infty$.
\end{itemize}

Note that by \eqref{definitionz} and \eqref{definitionz0} both $Z_n(z;a)$
and $Z^{(0)}_n(a)$ are scalar multiples of the constant
matrix $D_{\infty}^{-1} \begin{pmatrix} \sqrt{2}i & 1 & -1 \end{pmatrix}^T \begin{pmatrix} \sqrt{2}i& 1 & -1 \end{pmatrix}
    D_{\infty}$ whose square is zero. Therefore we have
such relations as
\begin{equation} \label{IplusZinv}
    \left(I + Z_n(z;a) - \frac{Z^{(0)}_n(a)}{z}\right)^{-1} = I - Z_n(z;a) + \frac{Z^{(0)}_n(a)}{z}
    \end{equation}
and $Z_n(z;a)^2 = Z_n(z;a) Z^{(0)}_n(a) = Z^{(0)}_n(a) Z_n(z;a) = (Z^{(0)}_n(a))^2 = 0$.

We also recall that for $a = a_n$ as in \eqref{an} we have by \eqref{ftauat0}
that $n^2 \tau (0;a_n)$ remains bounded as $n \to \infty$, while $f'(0,a_n)$ tends to $\sqrt{2}$.
Therefore by \eqref{definitionz0}
\begin{equation} \label{Z0decay}
    Z^{(0)}_n(a_n) = \mathcal{O}( n^{-1/2}) \quad \textrm{ as } n \to \infty.
    \end{equation}
This implies by \eqref{jumpsr0} that the same estimates as we had in \eqref{gor1}, \eqref{gor2} for
$J_{R_0}$ remain valid for $J_R$ on $\Sigma_R \setminus \partial U_0$. That is,
\begin{align} \label{jR1}
        J_{R}(z)  = \begin{cases}  I+
        \mathcal{O}( n^{-1})  & \textrm{ for } z \in \partial U_{-1} \cup \partial U_{1}, \\
            I + \mathcal{O}\left( e^{- C n^{2/3}} \right) &
            \textrm{ for } z \in \Sigma_R \setminus (\partial U_{-1} \cup \partial U_0 \cup \partial U_1).
            \end{cases}
\end{align}

From \eqref{appearancez}, \eqref{jumpsr0},\eqref{jumpsr}, and \eqref{IplusZinv} we have when $a = a_n$ as in \eqref{an},
\begin{align} \label{jR2}
    J_{R}(z) & = \left( I - Z_n(z;a_n) +  \frac{Z^{(0)}_n(a_n)}{z} \right)  \nonumber \\
    & \quad \times \left( I + Z_n(z;a_n) + \mathcal{O}\left(n^{-\frac{1}{6}}\right)\right)
    \left( I - \frac{Z^{(0)}_n(a_n)}{z} \right) \nonumber \\
    & = I + \mathcal{O}\left(n^{-\frac{1}{6}}\right), \qquad z \in \partial U_0,
\end{align}
where we also used \eqref{Z0decay} and the fact that $Z_n(z;a_n)$ remains bounded for $|z| = n^{-1/2}$.

\subsection{Conclusion of the steepest descent analysis}
We have now reached the goal of the steepest descent analysis. In the RH problem for $R$ we have
by \eqref{jR1} and \eqref{jR2} that all jump matrices $J_R$ tend to the identity matrix as $n \to \infty$.

Then by standard methods, see e.g.\ \cite{dei99} and also \cite{blku07} for a situation with varying
contours, we have that
$R(z)$ also tends to the identity matrix as $n \to \infty$, at the following rate
\begin{equation}
    R(z) = I + \mathcal{O}\left( \frac{n^{-\frac{1}{6}}}{1 + |z|}\right)
    \textrm{ uniformly for } z \in \mathbb{C} \setminus \Sigma_{R}. \label{estimater} \end{equation}
This concludes the steepest descent analysis of the Riemann-Hilbert problem for $Y$.

\section{Proof of Theorem \ref{maintheorem}}
\label{sec:proof}

Recall that $P_{n,n}(z;a)$ is the $(1,1)$ entry of $Y$, see \eqref{pn1n2fromy},
which we write as
\begin{equation} \label{PnninY}
    P_{n,n}(z;a) = \begin{pmatrix} 1 & 0 & 0 \end{pmatrix} Y(z) \begin{pmatrix}  1 \\ 0 \\ 0 \end{pmatrix}.
\end{equation}
The asymptotic formula \eqref{MehlerHeine} for $P_{n,n}$ will be derived from this by following
the series of transformations $Y \mapsto T \mapsto S \mapsto R$ for $z \in U_{0}$.

In the calculations that follow we assume that $z$ is in the upper part of the lens around $[0,1]$.
The proof for other $z$ in other regions is similar. We obtain  from \eqref{YtoT}, \eqref{TtoS2}, and \eqref{PnninY}
\begin{equation} \label{PnninS}
    P_{n,n}(z;a) = e^{2n(g_1(z) + g_2(z))} \begin{pmatrix} 1 & 0 & 0 \end{pmatrix}
        S(z) \begin{pmatrix}  1 \\ 0 \\ w_2(z)^{-1} e^{2n \varphi_2(z)} \end{pmatrix}.
\end{equation}

For $z \in U_0$ we have by \eqref{definitionp} and \eqref{defR0}
\[ S(z) = R_0(z) P_0(z) =
      R_0(z) E_n(z;a) \Psi\left(n^{\frac{3}{2}} f(z;a); n^{\frac{1}{2}} \tau(z;a) \right)
        e^{n \Lambda(z)} W(z)^{-1}. \]
Inserting this into \eqref{PnninS} and using the expressions \eqref{definitionlambda} and \eqref{definitionw}
for $\Lambda$ and $W$ we obtain
\begin{multline} \label{PnninPsi}
    P_{n,n}(z;a) = z^{-\beta} e^{2n(g_1(z) + g_2(z)} e^{ \frac{2}{3} n ( \varphi_1(z) + \varphi_2(z))}  \begin{pmatrix} 1 & 0 & 0 \end{pmatrix}
         R_0(z) E_n(z;a) \\
         \times \Psi\left(n^{\frac{3}{2}} f(z;a); n^{\frac{1}{2}} \tau(z;a) \right)
         \begin{pmatrix}  1  \\ 0 \\ 1 \end{pmatrix}
\end{multline}
where we used $w_2(z) = z^{\beta} (1-z)^{\gamma} h_2(z)$, see \eqref{w1w2continuation}.
The scalar prefactor in \eqref{PnninPsi} simplifies because of \eqref{definitionvarphi}
and the result is
\begin{multline} \label{PnninPsi2}
    P_{n,n}(z;a) = (-1)^n e^{-\frac{2n}{3} (l_1 + l_2)} z^{-\beta} \begin{pmatrix} 1 & 0 & 0 \end{pmatrix}
         R_0(z) E_n(z;a) \\
         \times \Psi\left(n^{\frac{3}{2}} f(z;a); n^{\frac{1}{2}} \tau(z;a) \right)
         \begin{pmatrix}  1  \\ 0 \\ 1 \end{pmatrix}
\end{multline}

Note that by \eqref{ddinfty}, \eqref{En}, \eqref{definitionz}, and \eqref{defRz}
\begin{multline} \label{R0Enproduct}
    R_0(z) E_n(z;a) = R(z) \left(I + Z_n(z;a) - \frac{Z_n^{(0)}(a)}{z} \right) D_{\infty}^{-1} \widetilde{N}(z;a) \\
    \times  D(z) W(z) A \left(n^{\tfrac{3}{2}} f(z;a); n^{\tfrac{1}{2}} \tau(z;a) \right)^{-1}
\end{multline}
Assuming that $a = -1 + \mathcal O(n^{-1/2})$ and $z = \mathcal O(n^{-3/2})$ as
$n \to \infty$, we have by \eqref{leadingtermsn1} and \eqref{leadingtermsn2}
\begin{equation} \label{Nleadingorder}
    \widetilde{N}(z) = \frac{1}{3 \cdot 2^{1/6}} z^{-\frac{1}{3}} \begin{pmatrix} \sqrt{2} i \\ 1 \\ -1 \end{pmatrix}
    \begin{pmatrix} - \omega & 1 & \omega^2 \end{pmatrix} (I + \mathcal{O}(n^{-1/2})).
    \end{equation}
Using this in the product \eqref{R0Enproduct} we see that the terms $Z_n(z;a) - \frac{Z_n^{(0)}(a)}{z}$
are canceled by the leading order term in \eqref{Nleadingorder}
because of the special form \eqref{definitionz} of $Z_n(z;a)$.

Since $Z_n(z;a) - \frac{Z_n^{(0)}(a)}{z}$ is uniformly bounded, we find that the first four factors
on the right-hand side of \eqref{R0Enproduct} combine to
(we also use \eqref{estimater}),
\begin{equation} \label{firstfactors} \left(I + \mathcal O(n^{-1/6}) \right) \frac{1}{3 \cdot 2^{1/6}} z^{-\frac{1}{3}}
    D_{\infty}^{-1}  \begin{pmatrix} \sqrt{2} i \\ 1 \\ -1 \end{pmatrix}
    \begin{pmatrix} - \omega & 1 & \omega^2 \end{pmatrix}.
    \end{equation}

The behavior of $D(z)$  is determined by \eqref{Dnear0aisminus1}. Then by \eqref{definitionw}
\[ D(z) W(z) =
      C e^{c_1 + c_2} \frac{2^{\tfrac{2}{3} \beta}}{3^{\tfrac{1}{2}(\alpha + \gamma) + \beta}}  z^{\beta/3}
        B_+ (I + \mathcal O(n^{-1/2})) \]
where $B_+$ is the diagonal matrix given by \eqref{definitionomega}. This factor also appears
in the definition \eqref{definitiona} of $A(z; \tau)$. Then the product of the last three factors
on the right-hand side of \eqref{R0Enproduct} gives us
\begin{multline} \label{secondfactors}
    C e^{c_1 + c_2} e^{- \tfrac{1}{6} n \tau(z;a)^2}  \sqrt{\frac{3}{2\pi}} \frac{2^{\tfrac{2}{3} \beta}}{3^{\tfrac{1}{2}(\alpha + \gamma) + \beta}}
    \left( \frac{n^{\frac{3}{2}} f(z;a)}{z} \right)^{- \frac{\beta}{3}} \\
    \times \Omega_+^{-1}
            \begin{pmatrix} \left(n^{\frac{3}{2}} f(z;a)\right)^{-1/3} & 0 & 0 \\ 0 & 1 & 0 \\
                0 & 0 & \left(n^{\frac{3}{2}} f(z;a)\right)^{1/3} \end{pmatrix} (I + \mathcal O(n^{-1/2}))
\end{multline}

Since $\begin{pmatrix} - \omega & 1 & \omega^2 \end{pmatrix} \Omega_+^{-1} = \begin{pmatrix} 0 & 0 & 1 \end{pmatrix}$,
we obtain by multiplying \eqref{firstfactors} and \eqref{secondfactors},
\begin{multline} \label{R0Enestimate}
    R_0(z) E_n(z;a) =
    \frac{  C e^{c_1 + c_2} e^{- \tfrac{1}{6} n \tau(z;a)^2} }{\sqrt{6\pi} \cdot 2^{1/6}}
  \frac{2^{\tfrac{2}{3} \beta}}{3^{\tfrac{1}{2}(\alpha + \gamma) + \beta}}
    \left( \frac{n^{\frac{3}{2}} f(z;a)}{z} \right)^{\frac{1}{3} - \frac{\beta}{3}} \\
    \times
    D_{\infty}^{-1}  \begin{pmatrix} \sqrt{2} i \\ 1 \\ -1 \end{pmatrix}
    \begin{pmatrix} 0 & 0 & 1 \end{pmatrix}
             (I + \mathcal O(n^{-1/6}))
                \end{multline}

Now let us now replace $z$ and $a$ by the scaled variables
\begin{equation} \label{scaling1}
    z_n = \frac{z}{\sqrt{2} n^{3/2}}, \qquad a_n = -1 + \frac{\sqrt{2} \tau}{n^{1/2}}.
    \end{equation}
It then follows from \eqref{scaling1}, and \eqref{lambda20}--\eqref{definitionftau} that
\begin{equation} \label{ftaulimits}
    n^{\frac{3}{2}} f(z_{n};a_{n}) =  z + \mathcal{O}\left( n^{-\frac{1}{2}}\right), \qquad
    n^{\frac{1}{2}} \tau(z_{n};a_{n}) =\tau + \mathcal{O}\left( n^{-\frac{1}{2}}\right),
    \end{equation}
    as $n \to \infty$, where the $\mathcal{O}$-terms hold uniformly for $z$ in a bounded set.
Then we obtain from \eqref{R0Enestimate}
\begin{multline} \label{R0Enscaled} R_0(z_n) E_n(z_n;a_n) =
    \frac{  C e^{c_1 + c_2} e^{- \tfrac{1}{6} \tau^2}}{\sqrt{6\pi}}
  \frac{2^{\tfrac{1}{2} \beta}}{3^{\tfrac{1}{2}(\alpha + \gamma) + \beta}} n^{\frac{1}{2} - \frac{\beta}{2}} \\
    \times
    D_{\infty}^{-1}  \begin{pmatrix} \sqrt{2} i \\ 1 \\ -1 \end{pmatrix}
    \begin{pmatrix} 0 & 0 & 1 \end{pmatrix}
             (I + \mathcal O(n^{-1/6}))
                \end{multline}

We insert \eqref{scaling1}, \eqref{ftaulimits}, and \eqref{R0Enscaled} into
\eqref{PnninPsi2}. We also note that  $D_{\infty}$ depends in an
analytic way on $a$, so that by \eqref{expressiond}
\[ \left(D_{\infty}^{-1}\right)_{1,1} = D_0(\infty;a_n)^{-1} = C^{-1} ( 1 + \mathcal{O}(n^{-1/2})). \]
Then we obtain
\begin{multline}
    P_{n,n}(z_n; a_n) =
    (-1)^n e^{-\frac{2n}{3} (l_1 + l_2)}
    \frac{e^{c_1 + c_2} e^{- \tfrac{1}{6} \tau^2}}{\sqrt{3\pi}}
  \frac{2^{\beta}}{3^{\tfrac{1}{2}(\alpha + \gamma) + \beta}} n^{\frac{1}{2} + \beta}
   \\
    \times i z^{-\beta}
    \begin{pmatrix} 0 & 0 & 1 \end{pmatrix} \Psi\left(z; \tau \right)
         \begin{pmatrix}  1  \\ 0 \\ 1 \end{pmatrix}
             (I + \mathcal O(n^{-1/6}))
             \end{multline}

The constants $l_1$ and $l_2$ depend on $a$ and therefore by \eqref{scaling1} on $\tau$.
One can show that
\begin{equation} \label{l1+l2scaling}
    e^{-\frac{2n}{3}(l_{1} + l_{2})} = \left( \frac{4}{27}\right)^{n} e^{-\sqrt{2}\tau n^{\frac{1}{2}}} e^{-\frac{5\tau^{2}}{6}} \cdot \left( 1 + \mathcal{O}\left(n^{-\frac{1}{2}}\right) \right)
\end{equation}
Thus we proved \eqref{MehlerHeine}
where $C_n$ is given by \eqref{constantCn} and $Q$ is
\begin{equation} \label{QinPsi}
    Q(z) := iz^{-\beta}
    \begin{pmatrix} 0 & 0 & 1 \end{pmatrix} \Psi\left(z; \tau \right)
         \begin{pmatrix}  1  \\ 0 \\ 1 \end{pmatrix}. \end{equation}

Looking at the definition \eqref{definitionpsi1} of $\Psi(z; \tau)$ with $0 < \arg z < \frac{\pi}{4}$,
we find from \eqref{QinPsi}
\begin{equation} \label{Qinq1q2}
    Q(z) = i z^{-\beta}
          \left( e^{2\beta \pi i} q_{1}''(z) + q_{2}''(z) \right)
\end{equation}
Then by the integral representation \eqref{integralrepresentationsq} of $q_1$ and $q_2$
we find by easy contour deformation from \eqref{Qinq1q2}, that for $\re z > 0$,
\begin{equation} \label{QonGamma0}
    Q(z) = i z^{-\beta} \int_{\Gamma_0} t^{-\beta-1} \exp \left( \frac{\tau}{t} - \frac{1}{2t^2} + zt \right) dt,
\end{equation}
with $\Gamma_0$ as in Figure \ref{figuregamma0}.
Making the change of variable $zt \mapsto t$, we arrive at the integral in \eqref{contourQ}, which
shows that $Q$ from \eqref{QinPsi} indeed agrees with the $Q$ defined by \eqref{contourQ} in the theorem.

This completes the proof of Theorem \ref{maintheorem}.

\begin{remark}
The property \eqref{eq:Znsquare} implies that $(I + Z_n(z;a))^{-1} = I  - Z_n(z;a)$ and so by \eqref{appearancez}
\[ (I - Z_n(z;a)) P_0(z) N(z)^{-1} = I + \mathcal{O}(n^{-1/6}) \qquad \textrm{for } z \in \partial U_0. \]
This suggests an alternative approach in which we redefine $P_0(z)$ with the extra factor $I - Z_n(z;a)$
on the left. Using this redefined $P_0$ in the transformation \eqref{defR0} from $S \mapsto R_0$, 
we would arrive at a RH problem for $R_0$ that has all jumps close to the identity matrix, 
but which has a simple pole at $z=0$. This can be included in the RH problem by providing 
a residue condition at $z=0$. The simple form of the residue matrix $Z_n^{(0)}(a)$ allows one to 
remove the pole again in a transformation $R_0 \mapsto R$ as in \cite{dezh94}. The resulting
$R$ is then exactly the same as before. 
 
We thank Percy Deift for this remark.
\end{remark}

\subsection*{Acknowledgement}

Klaas Deschout is supported  by K.U. Leuven
research grant OT/08/33, and the Belgian Interuniversity Attraction
Pole P06/02.

Arno Kuijlaars is supported in part by FWO-Flanders projects G.0427.09
and G.0641.11, by K.U. Leuven
research grant OT/08/33, by the Belgian Interuniversity Attraction
Pole P06/02, and by a grant from the Ministry of Education and Science of
Spain, project code MTM2005-08648-C02-01.

\end{document}